\theoremstyle{plain}
\newtheorem{thm}{Theorem}[section]
\newtheorem{cor}{Corollary}[thm]
\newtheorem{lem}[thm]{Lemma}
\newtheorem{prop}[thm]{Proposition}
\theoremstyle{definition}
\theoremstyle{remark}
\newtheorem*{rem}{Remark}
\newcommand\inner[2]{\langle #1, #2 \rangle}
\newcommand{\parcheck}{\check{\phantom{x}}}	
\newcommand{\tco}{\mathcal{T}}
\newcommand{\bo}{B(L^2(\mathbb{R}^{d}))}
\newcommand{\N}{\mathbb{N}}
\newcommand{\R}{\mathbb{R}}
\newcommand{\Rd}{\mathbb{R}^d}
\newcommand{\Rdd}{\mathbb{R}^{2d}}
\newcommand{\states}{\mathcal{W}}
\newcommand{\tr}{\mathrm{tr}}
\begin{document}

\pagestyle{plain}
\title{On accumulated Cohen's class distributions and mixed-state localization operators}
\author{Franz Luef}
\author{Eirik Skrettingland} 
\address{Department of Mathematics\\ NTNU Norwegian University of Science and
Technology\\ NO–7491 Trondheim\\Norway}
\email{franz.luef@math.ntnu.no, eirik.skrettingland@ntnu.no}
\keywords{}
\subjclass{81S30, 94A12, 42C25 }
\begin{abstract}
Recently we introduced mixed-state localization operators associated to a density operator and a (compact) domain in phase space. We continue the investigations of their eigenvalues and eigenvectors. Our main focus is the definition of a time-frequency distribution which is based on the Cohen class distribution associated to the density operator and the eigenvectors of the mixed-state localization operator. This time-frequency distribution is called the accumulated Cohen class distribution. If the trace class operator is a rank-one operator, then the mixed-state localization operators and the accumulated Cohen class distribution reduce to Daubechies' localization operators and the accumulated spectrogram. We extend all the results about the accumulated spectrogram to the accumulated Cohen class distribution. The techniques used in the case of spectrograms cannot be adapted to other distributions in Cohen's class since they rely on the reproducing kernel property of the short-time Fourier transform. Our approach is based on quantum harmonic analysis on phase space which also provides the tools and notions to introduce the analogues of the accumulated spectrogram for mixed-state localization operators; the accumulated Cohen's class distributions.   
\end{abstract}
\maketitle \pagestyle{myheadings} \markboth{F. Luef and E. Skrettingland}{On accumulated Cohen's class distributions}
\thispagestyle{empty}

\section{Introduction}

In their study of the spectral behavior of localization operators Abreu et al. introduced the accumulated spectrogram and established interesting results in \cite{Abreu:2016,Abreu:2017}, which revealed some intriguing features of localization operators. We show how the theorems in \cite{Abreu:2016,Abreu:2017} may be extended to a setting involving (infinite) sums of localization operators, aka mixed-state localization operators. 

The main object of this paper is an in-depth treatment of the mixed-state localization operators from \cite{Luef:2018} and their associated time-frequency distributions from the perspective developed in \cite{Abreu:2016,Abreu:2017}, and to describe them we first recall some facts about quantum harmonic analysis \cite{Werner:1984,Luef:2017vs}. Concretely, the convolution between two trace class operators and the convolution between a function and a trace class operator. Both convolutions are defined in terms of the {\it translation} of an operator $S$ by a point $z=(x,\omega)$ in phase space $\Rdd$: 
\begin{equation*}
  \alpha_z(S)=\pi(z)S\pi(z)^*,
\end{equation*}
where $\pi(z)$ denotes the time-frequency shift of $\psi\in L^2(\R^d)$ by $z=(x,\omega)\in\Rdd$, $\pi(z)\psi(t)=e^{2\pi it\omega}\psi(t-x)$.
The convolution between two trace class operators $S$ and $T$ is the function on $\Rdd$ given by 
\begin{equation*}
  S \star T(z) = \tr(S\alpha_z (\check{T})) \hskip 1em \text{for } z\in \Rdd,
\end{equation*}
where $\check{T}=PTP$ for $P\psi(x)=\psi(-x)$. An interesting example is the convolution of rank-one operators: 
\[(\psi\otimes \varphi)\star (\check{\psi} \otimes \check{\varphi})(z)= |V_{\varphi} \psi(z)|^2,\] 
where $\check{\psi}=P\psi$ and $\psi\otimes \varphi$ is given by $\psi\otimes \varphi(\xi)=\langle\xi,\varphi\rangle\psi$.

The convolution between a function  $f\in L^1(\R^{2d})$ 
and a trace class operator  $S$ is given by 
\begin{equation*}
  f\star S := \iint_{\R^{2d}}f(z)\alpha_z(S) \ dz;
\end{equation*}

For a rank-one operator $S=\varphi_2 \otimes \varphi_1$ with $\varphi_1,\varphi_2\in L^2(\Rd)$, we have that
	\[f\star (\varphi_2 \otimes \varphi_1)(\psi)=\iint_{\mathbb{R}^{2d}}f(z)V_{\varphi_1}\psi(z)\pi(z)\varphi_2\,dz, \]
which is a STFT-multiplier \cite{Hlawatsch:1992}, also known as a localization operator. In the case of $f=\chi_\Omega$, the characteristic function of a measurable subset $\Omega$ of $\R^{2d}$, and $\varphi_1=\varphi_2$ we obtain Daubechies' localization operator $\mathcal{A}_\Omega^{\varphi}$ \cite{Daubechies:1988}. Interesting results on the relation between the eigenfunctions of a localization operator and its domain have been given in \cite{Abreu:2012}.

A \textit{mixed-state localization operator} is an operator of the form $\chi_{\Omega}\star S$, where $S$ is a positive trace class operator with $\tr(S)=1$ -- a \textit{density operator}. The main theme of our paper is the step from rank-one operators $\varphi \otimes\varphi$ to arbitrary density operators, i.e. the step from Daubechies' localization operators to mixed-state localization operators. 

The quadratic time-frequency representation associated to localization operators is the spectrogram $|V_\varphi \psi(z)|^2$. In order to extend the results in \cite{Abreu:2016,Abreu:2017} to mixed-state localization operators we have to find a quadratic time-frequency representation defined by the density operator $S$. It turns out that elements of Cohen's class provide the desired object. 

 We have shown in \cite{Luef:2018} that $Q$ belongs to Cohen's class if it is of the form $Q_S(\psi)=\check{S}\star(\psi \otimes\psi)$, where $S$ is a linear operator mapping the Schwartz class $\mathcal{S}(\R^{2d})$ to the space of tempered distributions $\mathcal{S}^\prime(\R^{2d})$. In particular, density operators $S$ provide distributions in Cohen's class. The relevance of Cohen's class distributions has also been noted by \cite{Boggiatto:2017,Boggiatto:2018,Boggiatto:2010,Ramanathan:1994}.  

Furthermore we have given the following characterization in \cite{Luef:2018}: $S$ is a density operator if and only if $Q_S(\psi)$ is a positive function and $\int_{\R^{2d}}Q_S(\psi)(z)dz=\|\psi\|^2_2$ for any $\psi\in L^2(\R^{2d})$. Note that $Q_{\varphi \otimes\varphi}(\psi)$ is the spectrogram $|V_\varphi \psi(z)|^2$ and thus $Q_S$ is the correct generalization of the spectrogram.

Since the mixed-state localization operator $\chi_\Omega\star S$ is a positive trace class operator, the spectral theorem yields the existence of 
a sequence of eigenvalues and of eigenfunctions. We will denote the eigenvalues of $\chi_\Omega \star S$ by $\{\lambda_k^\Omega\}_{k\in \N}$ and the orthonormal basis formed by its eigenfunctions by $\{h_k^\Omega\}_{k\in \N}$, thus the spectral representation is
\begin{equation} \label{eq:singvalgenloc}
	\chi_\Omega \star S = \sum_{k=1}^\infty \lambda_k^\Omega h_k^\Omega \otimes h_k^\Omega.
\end{equation}
We always assume that the eigenvalues are arranged in decreasing order, i.e. $\lambda_1^\Omega\geq \lambda_2^\Omega \geq \dots$. 

Quantum harmonic analysis seems to provide the natural setting for the investigations of eigenvalues and eigenvectors of (mixed-state) localization operators as in this setup many of the proofs in \cite{Abreu:2016,Abreu:2017,DeMari:2002} become natural statements about convolutions between operators. An important aspect of this paper is that one can reformulate the results of \cite{Abreu:2016} in terms of quantum harmonic analysis which then allows us to formulate their results for mixed-state localization operators. Note that our approach provides an alternative proof of results for the accumulated spectrogram as well.  

Let us briefly present our results: The first result is that the eigenvalues of a mixed-state localization operator has the same asymptotic behaviour as the one for localization operators \cite{DeMari:2002,Ramanathan:1994}, see theorem \ref{thm:locopeigenvalues}. This is a prerequisite for generalizing the results \cite{Abreu:2016,Abreu:2017}. A key fact is that the  approach in \cite{Abreu:2016,Abreu:2017} is only feasible in the case of rank-one operators. For a general density operator one has to develop a different strategy. 
Ours is based on noting that the reproducing kernel techniques can be bypassed if one notes that the replacement of the spectrogram in this case is the function $\tilde{S}=S\star \check{S}$ on phase space $\Rdd$, which reduces to the spectrogram for $S=\varphi \otimes\varphi$. 
A crucial observation is an intrinsic link between mixed-state localization operators and Cohen class distributions: 

	\begin{equation*}
		\chi_\Omega \ast \tilde{S}(z)=\sum_{k=1}^{\infty} \lambda_k^\Omega Q_S(h_k^\Omega)(z), \hskip 1em \text{for } z\in \Rdd.
	\end{equation*}
We are now in the position to introduce the accumulated spectrogram associated to a mixed-state localization operator $\chi_\Omega\star S$ for a compact set $\Omega \subset \Rdd$. The 
\textit{accumulated Cohen class distribution} is defined by  
\begin{equation*}
  \rho^S_{\Omega}(z):=\sum_{k=1}^{A_\Omega} Q_S(h_k^\Omega) \hskip 1em \text{ for } z\in \Rdd,
\end{equation*}
where $A_{\Omega}=\lceil |\Omega| \rceil$. 

Note that $\rho^{\psi\otimes\psi}_{\Omega}$ is the accumulated spectrogram which is an intriguing object both from a mathematical and application point of view. Our main results are the extension of the theorems in \cite{Abreu:2016,Abreu:2017} on the accumulated spectrogram to accumulated Cohen's class distributions. Our proofs are non-trivial adaptations of the ones in \cite{Abreu:2016,Abreu:2017} and we have tried to emphasize the modifications required by the mixed-state setting. 

In theorem \ref{thm:asymp-conv} we demonstrate the asymptotic convergence of accumulated Cohen class distributions to the characteristic function of the domain:

\begin{restatable}[Asymptotic convergence]{thm}{asymptotic} \label{thm:asymp-conv}
	Let $S$ be a density operator and $\Omega\subset \Rdd$ a compact domain. Then $$\|\rho^S_{R \Omega}(R \hskip 0.1em\boldsymbol{\cdot} )- \chi_{\Omega}\|_{L^1}\to 0 \text{ as } R\to \infty.$$ 
\end{restatable}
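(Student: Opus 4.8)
The plan is to remove the scaling by a change of variables and then split the problem into a spectral part and a convolution-smoothing part. Writing $A_{R\Omega}=\lceil |R\Omega|\rceil$ and using $|R\Omega|=R^{2d}|\Omega|$, the substitution $w=Rz$ gives $\|\rho^S_{R\Omega}(R\hskip0.1em\boldsymbol{\cdot})-\chi_\Omega\|_{L^1}=R^{-2d}\|\rho^S_{R\Omega}-\chi_{R\Omega}\|_{L^1}$, since $\chi_\Omega(R^{-1}w)=\chi_{R\Omega}(w)$. By the triangle inequality it then suffices to show that both
\[ R^{-2d}\|\rho^S_{R\Omega}-\chi_{R\Omega}\ast\tilde S\|_{L^1} \quad\text{and}\quad R^{-2d}\|\chi_{R\Omega}\ast\tilde S-\chi_{R\Omega}\|_{L^1} \]
tend to $0$ as $R\to\infty$, where $\tilde S=S\star\check S$.

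For the first term I would exploit the identity $\chi_{R\Omega}\ast\tilde S=\sum_{k=1}^\infty\lambda_k^{R\Omega}Q_S(h_k^{R\Omega})$ together with $\rho^S_{R\Omega}=\sum_{k=1}^{A_{R\Omega}}Q_S(h_k^{R\Omega})$. Since each $Q_S(h_k^{R\Omega})\ge0$ has $\int_{\Rdd}Q_S(h_k^{R\Omega})=\|h_k^{R\Omega}\|_2^2=1$, and $0\le\lambda_k^{R\Omega}\le1$, the triangle inequality gives
\[ \|\rho^S_{R\Omega}-\chi_{R\Omega}\ast\tilde S\|_{L^1}\le\sum_{k\le A_{R\Omega}}(1-\lambda_k^{R\Omega})+\sum_{k>A_{R\Omega}}\lambda_k^{R\Omega}=:E_{R\Omega}. \]
The heart of the matter is to show $E_{R\Omega}=o(R^{2d})$. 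I would bound $E_{R\Omega}$ by splitting the indices according to whether $\lambda_k^{R\Omega}\gtrless\tfrac12$: whenever the term that appears is the smaller of $\lambda_k^{R\Omega}$ and $1-\lambda_k^{R\Omega}$, it is $\le 2\lambda_k^{R\Omega}(1-\lambda_k^{R\Omega})$, while the remaining ``unfavourable'' indices number at most $|A_{R\Omega}-N_{R\Omega}|$, where $N_{R\Omega}=\#\{k:\lambda_k^{R\Omega}\ge\tfrac12\}$. Using $\sum_k\lambda_k^{R\Omega}=\tr(\chi_{R\Omega}\star S)=|R\Omega|$ and $A_{R\Omega}=\lceil|R\Omega|\rceil$, a short estimate yields $|A_{R\Omega}-N_{R\Omega}|\le 1+2W_{R\Omega}$ and hence $E_{R\Omega}\le 4W_{R\Omega}+1$, where $W_{R\Omega}:=\sum_k\lambda_k^{R\Omega}(1-\lambda_k^{R\Omega})=\tr\big(\chi_{R\Omega}\star S-(\chi_{R\Omega}\star S)^2\big)$. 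Finally the eigenvalue asymptotics of Theorem \ref{thm:locopeigenvalues} (smallness of the plunge region) give $W_{R\Omega}=o(|R\Omega|)=o(R^{2d})$, so $R^{-2d}E_{R\Omega}\to0$.

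For the second term I would use that $\tilde S\ge0$ with $\int_{\Rdd}\tilde S=\tr(S)\tr(\check S)=1$, so that $\chi_{R\Omega}\ast\tilde S-\chi_{R\Omega}=\int_{\Rdd}\big(\chi_{R\Omega}(\cdot-w)-\chi_{R\Omega}\big)\tilde S(w)\,dw$. Taking $L^1$-norms, interchanging the integrations, and writing the inner norm as the measure of a symmetric difference gives
\[ R^{-2d}\|\chi_{R\Omega}\ast\tilde S-\chi_{R\Omega}\|_{L^1}\le\int_{\Rdd}\big|\Omega\,\triangle\,(\Omega+R^{-1}w)\big|\,\tilde S(w)\,dw, \]
using $|(R\Omega+w)\,\triangle\,R\Omega|=R^{2d}\,|\Omega\,\triangle\,(\Omega+R^{-1}w)|$. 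For each fixed $w$ the integrand tends to $0$ as $R\to\infty$ by continuity of translation in $L^1$ (applied to $\chi_\Omega$, which is integrable as $\Omega$ is compact), and it is dominated by $2|\Omega|\,\tilde S(w)\in L^1(\Rdd)$; dominated convergence then finishes this term.

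The main obstacle is the spectral estimate of the second paragraph: controlling the ``overshoot'' $\sum_{k>A_{R\Omega}}\lambda_k^{R\Omega}$ by $W_{R\Omega}$ and reducing it to the eigenvalue asymptotics of Theorem \ref{thm:locopeigenvalues}. This is precisely the place where the reproducing-kernel arguments available for the rank-one (spectrogram) case must be replaced by quantum-harmonic-analysis reasoning through $\tilde S=S\star\check S$; once the reduction is in place, the change of variables and the dominated-convergence argument are routine.
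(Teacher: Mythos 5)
Your proof is correct, and its skeleton coincides with the paper's: the same triangle-inequality decomposition through $\chi_{R\Omega}\ast\tilde S$, the same change of variables $w=Rz$, and the same reduction of the smoothing term to the approximate-identity property of $\tilde S$ (the paper simply cites corollary \ref{cor:appid} where you re-derive it by dominated convergence and continuity of translation). The genuine divergence is in how the spectral quantity $E_{R\Omega}=\sum_{k\le A_{R\Omega}}(1-\lambda_k^{R\Omega})+\sum_{k>A_{R\Omega}}\lambda_k^{R\Omega}$ is shown to be $o(R^{2d})$. The paper (lemma \ref{lem:abreu}) writes $E_{R\Omega}\le 1+2(|R\Omega|-\sum_{k\le A_{R\Omega}}\lambda_k^{R\Omega})$ and controls the deficit by comparing $A_{R\Omega}$ with the counting function $\#\{k:\lambda_k^{R\Omega}>1-\delta\}$, invoking theorem \ref{thm:locopeigenvalues} and a $\limsup$ argument in $\delta$. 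You instead bound $E_{R\Omega}\le 4W_{R\Omega}+1$ with $W_{R\Omega}=\tr(\chi_{R\Omega}\star S)-\tr((\chi_{R\Omega}\star S)^2)$; your index-splitting at $\lambda_k\gtrless\tfrac12$ and the estimate $|A_{R\Omega}-N_{R\Omega}|\le 1+2W_{R\Omega}$ both check out. This is precisely the projection-functional trick the paper deploys only later, in the proof of theorem \ref{thm:sharpbound}, and it buys you something: $W_{R\Omega}=|R\Omega|-\int_{R\Omega}\int_{R\Omega}\tilde S(z-z')\,dz\,dz'$ by lemma \ref{lem:traces}, so $W_{R\Omega}=o(R^{2d})$ follows immediately from corollary \ref{cor:appid}, without passing through the eigenvalue-counting theorem at all; your route is thus slightly more self-contained and more quantitative (combined with lemma \ref{lem:boundedvariation} it already yields non-asymptotic bounds). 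The one place you are terser than you should be is the final step, where you attribute $W_{R\Omega}=o(R^{2d})$ to theorem \ref{thm:locopeigenvalues}: deducing it from that theorem requires applying it at two thresholds $\delta$ and $1-\delta$ and a small extra argument for the tail $\{k:\lambda_k^{R\Omega}\le\delta\}$, whereas the deduction from lemma \ref{lem:traces} and corollary \ref{cor:appid} is a one-liner; I would state the latter.
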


We then move on to study the non-asymptotic convergence of accumulated Cohen class distributions, where the bounds depend on the size of the perimeter of the domain $\Omega \subset \Rdd$. To quantify the size of the perimeter, we will use the variation of its characteristic function $\chi_\Omega$ and a subset $M_{op}^*$ of density operators: 
\begin{equation*} 
  M_{op}^*=\{S \text{ trace class operator} : S\geq 0, \tr(S)=1  \text{ and } \int_{\Rdd} \tilde{S}(z) |z| \ dz < \infty  \},
\end{equation*}
where $|z|$ is the Euclidean norm of $z$, with the associated norm $\|S\|^2_{M_{op}^*}=\int_{\Rdd} \tilde{S}(z) |z| \ dz.$
This norm lets us bound the approximation of $\chi_\Omega$ by $\chi_\Omega \ast \tilde{S}$. Consequently, we are able to prove the next statement: 
\begin{restatable}[Non-asymptotic convergence]{thm}{nonasymptotic}  \label{thm:weakl2}
	If $S\in M_{op}^*$ and $\Omega \subset \Rdd$ is a compact domain with finite perimeter such that $\|S\|^2_{M_{op}^*}|\partial \Omega|\geq 1$, then for any $\delta >0$ $$\left| \left\{ z\in \Rdd : \left| \rho_\Omega^S(z)-\chi_\Omega(z)  \right|> \delta \right\} \right|\lesssim \frac{1}{\delta^2}\|S\|^2_{M_{op}^*}|\partial \Omega|.$$
\end{restatable}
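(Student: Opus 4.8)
The plan is to deduce the stated weak-type $(2,2)$ bound from a genuine $L^2$ estimate $\|\rho_\Omega^S-\chi_\Omega\|_{L^2}^2\lesssim\|S\|^2_{M_{op}^*}|\partial\Omega|$ followed by Chebyshev's inequality, since $|\{|\rho_\Omega^S-\chi_\Omega|>\delta\}|\le\delta^{-2}\|\rho_\Omega^S-\chi_\Omega\|_{L^2}^2$. I split the difference through the smoothed characteristic function as $\rho_\Omega^S-\chi_\Omega=(\rho_\Omega^S-\chi_\Omega\ast\tilde S)+(\chi_\Omega\ast\tilde S-\chi_\Omega)$ and estimate the two summands in $L^2$ separately. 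For the second (geometric) term I first note that $\tilde S\ge0$ and $\int_{\Rdd}\tilde S=\tr(S)\tr(\check S)=1$, so $\tilde S$ is a probability density with first moment $\int\tilde S(w)|w|\,dw=\|S\|^2_{M_{op}^*}$; writing $\chi_\Omega\ast\tilde S-\chi_\Omega=\int\tilde S(w)(\chi_\Omega(\boldsymbol\cdot-w)-\chi_\Omega)\,dw$ and using the finite-perimeter estimate $\|\chi_\Omega(\boldsymbol\cdot-w)-\chi_\Omega\|_{L^1}\le|w|\,|\partial\Omega|$ yields $\|\chi_\Omega\ast\tilde S-\chi_\Omega\|_{L^1}\le|\partial\Omega|\,\|S\|^2_{M_{op}^*}$ (this perimeter bound is presumably already recorded as a lemma). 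Since $0\le\chi_\Omega\ast\tilde S\le1$, this upgrades to $\|\chi_\Omega\ast\tilde S-\chi_\Omega\|_{L^2}^2\le\|\chi_\Omega\ast\tilde S-\chi_\Omega\|_{L^1}\le|\partial\Omega|\,\|S\|^2_{M_{op}^*}$.

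The first (spectral) term is where the rank-one reproducing-kernel argument is to be replaced by quantum harmonic analysis. Using $Q_S(\psi)=\check S\star(\psi\otimes\psi)$ together with the identity $\chi_\Omega\ast\tilde S=\sum_k\lambda_k^\Omega Q_S(h_k^\Omega)$, linearity of $\star$ in its operator argument lets me write both functions as convolutions with $\check S$, namely $\rho_\Omega^S-\chi_\Omega\ast\tilde S=\check S\star B$ with $B:=P_\Omega-\chi_\Omega\star S$, where $P_\Omega=\sum_{k\le A_\Omega}h_k^\Omega\otimes h_k^\Omega$ projects onto the top $A_\Omega$ eigenvectors. The operator $B$ is self-adjoint and diagonal in the eigenbasis, with eigenvalues $1-\lambda_k^\Omega$ for $k\le A_\Omega$ and $-\lambda_k^\Omega$ for $k>A_\Omega$. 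Applying the Young inequality for operator convolutions $\|R\star T\|_{L^2}\le\|R\|_{\mathcal{T}^1}\|T\|_{\mathcal{T}^2}$ with $R=\check S$ (so $\|\check S\|_{\mathcal{T}^1}=\tr(S)=1$) gives
\[\|\rho_\Omega^S-\chi_\Omega\ast\tilde S\|_{L^2}^2\le\|B\|_{\mathcal{T}^2}^2=\sum_{k\le A_\Omega}(1-\lambda_k^\Omega)^2+\sum_{k> A_\Omega}(\lambda_k^\Omega)^2,\]
which sidesteps all cross-term and reproducing-kernel computations and reduces the problem to a purely spectral quantity.

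It then remains to bound this spectral quantity by $|\partial\Omega|\,\|S\|^2_{M_{op}^*}$. The natural bridge is the plunge-region sum $\sum_k\lambda_k^\Omega(1-\lambda_k^\Omega)=\tr(\chi_\Omega\star S)-\tr\big((\chi_\Omega\star S)^2\big)$: evaluating the second trace through the covariance $\alpha_z\alpha_w=\alpha_{z+w}$ and $\tr(\alpha_z(S)\alpha_w(S))=\tilde S(w-z)$ gives $\tr\big((\chi_\Omega\star S)^2\big)=\langle\chi_\Omega,\chi_\Omega\ast\tilde S\rangle$, whence $\sum_k\lambda_k^\Omega(1-\lambda_k^\Omega)=\langle\chi_\Omega,\chi_\Omega-\chi_\Omega\ast\tilde S\rangle\le\|\chi_\Omega-\chi_\Omega\ast\tilde S\|_{L^1}\le|\partial\Omega|\,\|S\|^2_{M_{op}^*}$, reusing the geometric estimate. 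Because $0\le\chi_\Omega\star S\le I$ one has $\lambda_k^\Omega\in[0,1]$, so $(1-\lambda_k^\Omega)^2\le\lambda_k^\Omega(1-\lambda_k^\Omega)$ when $\lambda_k^\Omega\ge\tfrac12$ and $(\lambda_k^\Omega)^2\le\lambda_k^\Omega(1-\lambda_k^\Omega)$ when $\lambda_k^\Omega<\tfrac12$.

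I expect the main technical obstacle to be the elementary but delicate bookkeeping that remains: the indices $k\le A_\Omega$ with $\lambda_k^\Omega<\tfrac12$ and $k>A_\Omega$ with $\lambda_k^\Omega\ge\tfrac12$ are \emph{not} controlled by the plunge sum termwise, and they contribute an extra count $|A_\Omega-m|$ with $m=\#\{k:\lambda_k^\Omega\ge\tfrac12\}$. Controlling this requires comparing the integer $A_\Omega=\lceil|\Omega|\rceil$ and $m$ with $|\Omega|=\sum_k\lambda_k^\Omega$, producing $|A_\Omega-m|\le1+2\sum_k\lambda_k^\Omega(1-\lambda_k^\Omega)$; the hypothesis $\|S\|^2_{M_{op}^*}|\partial\Omega|\ge1$ is precisely what absorbs the stray additive constant, yielding $\|B\|_{\mathcal{T}^2}^2\lesssim|\partial\Omega|\,\|S\|^2_{M_{op}^*}$. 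Combining the two $L^2$ bounds via $\{|\rho_\Omega^S-\chi_\Omega|>\delta\}\subseteq\{|\rho_\Omega^S-\chi_\Omega\ast\tilde S|>\delta/2\}\cup\{|\chi_\Omega\ast\tilde S-\chi_\Omega|>\delta/2\}$ and Chebyshev then completes the argument.
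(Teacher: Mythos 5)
Your proposal is correct, but it takes a genuinely different route from the paper. The paper never proves an $L^2$ estimate: it establishes the weak-type bound on $\rho_\Omega^S-\chi_\Omega\ast\tilde{S}$ directly by writing the difference as $\sum_k \mu_k Q_S(h_k^\Omega)$ with $\mu_k=1-\lambda_k^\Omega$ for $k\leq A_\Omega$ and $\mu_k=\lambda_k^\Omega$ otherwise, splitting the sum at the indices $a_\delta'=\min\{a_\delta,A_\Omega\}$ and $b_\delta'=\max\{b_\delta,A_\Omega\}$, absorbing the head and tail pointwise into $2\delta$ via the identity $\sum_k Q_S(h_k^\Omega)(z)=\tr(S)=1$ (lemma \ref{lem:inversion}), and then controlling the plunge region by the index count $b_\delta'-a_\delta'\lesssim \delta^{-1}\|S\|^2_{M_{op}^*}|\partial\Omega|$ from lemma \ref{lem:opnormbound} together with a weak-$L^1$ Chebyshev estimate using $\|Q_S(h_k^\Omega)\|_{L^1}=1$. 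You instead prove a genuine $L^2$ bound: writing $\rho_\Omega^S-\chi_\Omega\ast\tilde{S}=\check{S}\star B$ with $B=P_\Omega-\chi_\Omega\star S$ and invoking the operator Young inequality $\|\check{S}\star B\|_{L^2}\leq\|\check{S}\|_{\tco}\|B\|_{\mathcal{T}^2}$ reduces everything to the Hilbert--Schmidt norm of $B$, which you then bound by the projection functional $\tr(\chi_\Omega\star S)-\tr((\chi_\Omega\star S)^2)$ plus the integer discrepancy $|A_\Omega-m|$; this is exactly the quantity the paper only exploits later, in theorem \ref{thm:sharpbound} and lemma \ref{lem:tracedifference}. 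Your bookkeeping is sound: the bad indices form a single block of size $|A_\Omega-m|$ because the eigenvalues are decreasing, and $|A_\Omega-m|\leq 1+2\sum_k\lambda_k^\Omega(1-\lambda_k^\Omega)$ follows from $|A_\Omega-|\Omega||\leq 1$ and $\lambda_k(1-\lambda_k)\geq\frac{1}{2}\min\{\lambda_k,1-\lambda_k\}$, with the hypothesis $\|S\|^2_{M_{op}^*}|\partial\Omega|\geq 1$ absorbing the additive constant just as in the paper's own use of that hypothesis. Two remarks: the Young inequality for $\mathcal{T}^1\star\mathcal{T}^2\to L^2$ is not stated in this paper and should be cited from \cite{Werner:1984} or \cite{Luef:2017vs}, where it is proved; and your argument actually yields the stronger conclusion $\|\rho_\Omega^S-\chi_\Omega\|_{L^2}^2\lesssim\|S\|^2_{M_{op}^*}|\partial\Omega|$, from which the stated weak-type bound follows by Chebyshev, whereas the paper's three-part splitting gives only the weak-type estimate.
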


In \cite{Abreu:2017} the sharpness of this bound for the spectrogram was shown by considering Euclidean balls $B(z,R)=\{z'\in \Rdd : |z|<R\}$ as the domain $\Omega$. In theorem \ref{thm:sharprate} we demonstrate this sharpness for accumulated Cohen class distributions $Q_S$ for $S\in M_{op}^*$. Our approach is inspired by the spectrogram results in \cite{DeMari:2002,Feichtinger:2014} where the projection functional enters in a crucial manner. We give an expression for this projection functional applied to $\chi_\Omega \star S$:
 \[ \tr(\chi_\Omega \star S)-\tr((\chi_\Omega \star S)^2)=\int_{\Omega}\int_{\Rdd\setminus \Omega } \tilde{S}(z-z')\ dz' dz. \]
The results above also shed some light on results in \cite{Luef:2018}, where we considered the question of recovering $\Omega$ from $\chi_\Omega \star S$. The approach in \cite{Luef:2018} was only concerned with establishing conditions on $S$ for this to be possible, and offered no clue as to how $\Omega$ could be recovered. Theorem \ref{thm:weakl2} shows that $\rho_\Omega^S$, defined using a finite number of eigenfunctions of $\chi_\Omega \star S$, estimates $\chi_\Omega.$The sharpness of the bounds is contained in theorem \ref{thm:sharprate}:
\begin{restatable}[Sharpness]{thm}{sharpness} \label{thm:sharprate}
	Let $S\in M_{op}^*$. There exist constants $C_S^1$ and $C_S^2$ such that for $R>1$ $$C_S^1 R^{2d-1}\leq \| \rho_{B(0,R)}^S-\chi_{B(0,R)} \|_{L^1} \leq C_S^2 R^{2d-1}.$$
\end{restatable}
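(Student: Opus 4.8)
The plan is to pivot both inequalities through the \emph{projection functional}
\[
P(\Omega):=\tr(\chi_\Omega\star S)-\tr\big((\chi_\Omega\star S)^2\big)=\sum_{k=1}^\infty\lambda_k^\Omega(1-\lambda_k^\Omega),
\]
for which the excerpt already records $P(\Omega)=\int_\Omega\int_{\Rdd\setminus\Omega}\tilde S(z-z')\,dz'\,dz$. First I would rewrite this, after the substitution $u=z-z'$ and using $\int_{\Rdd}\tilde S=\tr(S)\tr(\check S)=1$ and $\tilde S\geq0$, as the purely geometric quantity $P(\Omega)=\tfrac12\int_{\Rdd}\tilde S(u)\,|\Omega\,\triangle\,(\Omega+u)|\,du$, and record the companion identity $\|\chi_\Omega\ast\tilde S-\chi_\Omega\|_{L^1}=2P(\Omega)$ (split over $\Omega$ and $\Rdd\setminus\Omega$, using $0\le\chi_\Omega\ast\tilde S\le1$). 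The theorem then reduces to the two-sided estimate $P(B(0,R))\asymp R^{2d-1}$ together with a comparison between $\|\rho^S_\Omega-\chi_\Omega\|_{L^1}$ and $P(\Omega)$.

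For the upper bound I would use the triangle inequality
\[
\|\rho^S_\Omega-\chi_\Omega\|_{L^1}\le\|\rho^S_\Omega-\chi_\Omega\ast\tilde S\|_{L^1}+\|\chi_\Omega\ast\tilde S-\chi_\Omega\|_{L^1}.
\]
The second term equals $2P(\Omega)$, which by the geometric formula and the standard BV estimate $|\Omega\triangle(\Omega+u)|\le|u|\,|\partial\Omega|$ is at most $\|S\|^2_{M_{op}^*}|\partial\Omega|$. For the first term, the intrinsic link $\chi_\Omega\ast\tilde S=\sum_k\lambda_k^\Omega Q_S(h_k^\Omega)$ together with $Q_S(h_k^\Omega)\ge0$ and $\int Q_S(h_k^\Omega)=1$ gives $\|\rho^S_\Omega-\chi_\Omega\ast\tilde S\|_{L^1}\le D:=\sum_{k\le A_\Omega}(1-\lambda_k^\Omega)+\sum_{k>A_\Omega}\lambda_k^\Omega$. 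Splitting the eigenvalues at the threshold $1/2$ and using $\min(\lambda,1-\lambda)\le2\lambda(1-\lambda)$, one checks $D\lesssim P(\Omega)+1$. Combining, $\|\rho^S_\Omega-\chi_\Omega\|_{L^1}\lesssim P(\Omega)+1$, and since $P(\Omega)\le\tfrac12\|S\|^2_{M_{op}^*}|\partial\Omega|$ with $|\partial B(0,R)|\asymp R^{2d-1}$, this yields the bound $C_S^2R^{2d-1}$ (small $R$ absorbed into the constant).

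The lower bound is where exact bookkeeping matters, since the crude estimate $\|\rho^S_\Omega-\chi_\Omega\|_{L^1}\ge2P(\Omega)-D$ is too lossy. The key input is the covariance identity $\int_\Omega Q_S(h_k^\Omega)\,dz=\langle(\chi_\Omega\star S)h_k^\Omega,h_k^\Omega\rangle=\lambda_k^\Omega$, which follows from the adjointness of the two convolutions (writing $Q_S(\psi)(z)=\langle\check S\,\pi(z)\check\psi,\pi(z)\check\psi\rangle$ and using $P\pi(z)P=\pi(-z)$). Together with $\sum_k\lambda_k^\Omega=\tr(\chi_\Omega\star S)=|\Omega|$ it gives the exact masses $\int_\Omega\rho^S_\Omega=\sum_{k\le A_\Omega}\lambda_k^\Omega$ and $\int_{\Rdd\setminus\Omega}\rho^S_\Omega=\sum_{k\le A_\Omega}(1-\lambda_k^\Omega)$. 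Hence, writing $\|\rho^S_\Omega-\chi_\Omega\|_{L^1}=\int_\Omega|\rho^S_\Omega-1|+\int_{\Rdd\setminus\Omega}\rho^S_\Omega$ and bounding the first integral below by $\big|\int_\Omega(\rho^S_\Omega-1)\big|=\sum_{k>A_\Omega}\lambda_k^\Omega$,
\[
\|\rho^S_\Omega-\chi_\Omega\|_{L^1}\ge\sum_{k>A_\Omega}\lambda_k^\Omega+\sum_{k\le A_\Omega}(1-\lambda_k^\Omega)=D\ge P(\Omega).
\]

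It remains to prove $P(B(0,R))\gtrsim R^{2d-1}$, which I regard as the main obstacle. Using the geometric formula for $P$, I would fix a bounded annulus $a\le|u|\le b$ on which $\tilde S$ carries positive mass $\beta>0$ (possible since $\tilde S$ is an $L^1$ probability density, hence has no atom at the origin) and invoke the elementary lower bound $|B(0,R)\triangle(B(0,R)+u)|\ge c_d\,|u|\,R^{2d-1}$ valid for $|u|\le R$. For $R\ge b$ this gives $P(B(0,R))\ge\tfrac12 c_d\,a\,\beta\,R^{2d-1}$, and the remaining compact range $1<R<b$ is absorbed into the constant, using $P(B(0,R))>0$. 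The delicate points are thus the \emph{lower} bound on the symmetric-difference volume of balls (the matching upper bound being the routine perimeter estimate) and ensuring that $C_S^1$ is strictly positive, which is exactly guaranteed by $0<\|S\|^2_{M_{op}^*}<\infty$, i.e.\ by the hypothesis $S\in M_{op}^*$.
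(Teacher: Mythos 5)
Your proposal is correct, and its overall architecture coincides with the paper's: both pivot through the projection functional $P(\Omega)=\tr(\chi_\Omega\star S)-\tr((\chi_\Omega\star S)^2)=\sum_k\lambda_k^\Omega(1-\lambda_k^\Omega)$, both use the exact mass identities $\int_\Omega\rho_\Omega^S=\sum_{k\le A_\Omega}\lambda_k^\Omega$ and $\int_{\Rdd}Q_S(h_k^\Omega)=1$ to get $\|\rho_\Omega^S-\chi_\Omega\|_{L^1}=\sum_{k>A_\Omega}\lambda_k^\Omega+\sum_{k\le A_\Omega}(1-\lambda_k^\Omega)\ge P(\Omega)$, and both obtain the upper bound from $1+2\bigl(|\Omega|-\sum_{k\le A_\Omega}\lambda_k^\Omega\bigr)$ controlled by $P(\Omega)\le\tfrac12\|S\|^2_{M_{op}^*}|\partial\Omega|$ (your detour through $D\lesssim P(\Omega)+1$ via the threshold $1/2$ is a correct, slightly rougher version of the paper's direct chain in Theorem \ref{thm:sharpbound}). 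The one genuinely different ingredient is the lower bound $P(B(0,R))\gtrsim R^{2d-1}$. The paper (Lemmas \ref{lem:lowerbound}, \ref{lem:tracedifference} and Proposition \ref{prop:lowerbound}) exploits continuity of $\tilde S$ together with $\tilde S(0)=\tr(S^2)>0$ to minorize $\tilde S$ near the origin by a normalized box kernel $\frac{m}{|B(0,r)|}\int\chi_{B(z'',r)}(z)\chi_{B(z'',r)}(z')\,dz''$, and then integrates over the annulus $R-r/2\le|z''|\le R+r/2$. You instead rewrite $P(\Omega)=\tfrac12\int\tilde S(u)\,|\Omega\triangle(\Omega+u)|\,du$ (valid by the evenness $\tilde S(u)=\tilde S(-u)$ the paper records in Lemma \ref{lem:opnormbound}), restrict $u$ to an annulus carrying positive $\tilde S$-mass, and invoke the geometric estimate $|B(0,R)\triangle(B(0,R)+u)|\ge c_d|u|R^{2d-1}$ for $|u|\le R$. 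Both routes are sound; yours uses only that $\tilde S\ge0$ is an $L^1$ probability density (no continuity), at the price of a separate compactness argument for the range $1<R<b$, whereas the paper's kernel trick covers all $R>1$ uniformly but leans on the continuity and strict positivity of $\tilde S$ at the origin. The symmetric-difference lower bound you flag as delicate does hold (e.g.\ the set of points $t\hat v$ with $R-|u|/2<t<R$ and $\hat v\cdot\hat u\le-1/2$ lies in $B(0,R)\setminus(B(0,R)+u)$ and has measure $\gtrsim|u|R^{2d-1}$), so there is no gap.
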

We close this paper by discussing some examples of Cohen's class distributions suitable for the accumulated Cohen's class construction, namely those given by a density operator $S$. In particular we show that any such distribution can be used to obtain \textit{new} examples by convolving an operator with a positive function, previously noted in a different setting by Gracia-Bond\'ia and V\'arilly \cite{Gracia:1988}.

\section{Preliminaries} \label{sec:prelim}

\subsection{The short-time Fourier transform}
If $\psi:\R^d\to \mathbb{C}$ and $z=(x,\omega)\in \R^{2d}$, we define the \textit{translation operator} $T_x$ by $T_x\psi (t)=\psi(t-x)$, the \textit{modulation operator} $M_{\omega}$ by $M_{\omega}\psi (t)=e^{2 \pi i \omega \cdot t} \psi (t)$ and the \textit{time-frequency shifts} $\pi(z)$ by $\pi(z)=M_{\omega}T_x$. 
For $\psi,\phi \in L^2(\R^d)$ the \textit{short-time Fourier transform} (STFT) $V_{\phi}\psi$ of $\psi$ with window $\phi$ is the function on $\R^{2d}$
 defined by
 \begin{equation*}
  V_{\phi}\psi(z)=\inner{\psi}{\pi(z)\phi} \hskip 1em \text{ for } z\in \Rdd,
\end{equation*}
where $\inner{\cdot}{\cdot}$ is the usual inner product on $L^2(\Rd)$. By replacing the inner product above with a duality bracket\footnote{Which we always assume is antilinear in the second coordinate, to be consistent with the inner product on $L^2(\Rd)$.}, the STFT may be extended to other spaces, such as $\psi\in \mathcal{S}(\Rd), \phi \in \mathcal{S}^{\prime}(\Rd)$ where $\mathcal{S}(\Rd)$ is the Schwartz space and $\mathcal{S}^{\prime}(\Rd)$ its dual space of tempered distributions. We will also meet a close relative of the STFT: the cross-Wigner distribution, defined for $\psi,\xi \in L^2(\Rd)$ by $$ W(\psi,\xi)(x,\omega)=\int_{\Rd} \psi\left(x+\frac{t}{2}\right)\overline{\xi\left(x-\frac{t}{2}\right)} e^{-2\pi i \omega \cdot t} \ dt \hskip 1em \text{ for } (x,\omega)\in \Rdd. $$ 

\subsection{Operator theory} \label{sec:operatortheory}
Our approach relies heavily on properties of the bounded operators $\bo$ on $L^2(\Rd)$, and a basic result is the \textit{spectral representation} of self-adjoint compact operators  \cite[Thm. 3.5]{Busch:2016}.
\begin{prop}
\label{prop:singval}
	Let $S$ be a self-adjoint, compact operator on $L^2(\Rd)$ with eigenvalues $\{\lambda_k\}_{k\in \N}$. There exists an orthonormal basis  $\{\varphi_k\}_{k\in \mathbb{N}}$ in $L^2(\Rd)$ such that $S$ may be expressed as 
\begin{equation*}
S = \sum\limits_{k \in \mathbb{N}} \lambda_k \varphi_k\otimes \varphi_k, 
\end{equation*}
with convergence in the operator norm. Here $\varphi_k\otimes \varphi_k$ is the rank-one operator defined by $\varphi_k\otimes \varphi_k (\xi)=\inner{\xi}{\varphi_k}\varphi_k$ for $\xi\in L^2(\Rd)$.
\end{prop}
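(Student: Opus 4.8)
The statement is the spectral theorem for compact self-adjoint operators, so the plan is to construct the eigenvectors one at a time by a variational argument and then to verify that they, together with the kernel, exhaust $L^2(\Rd)$. The single place where compactness of $S$ is indispensable is in producing an eigenvector realizing the operator norm; everything after that is linear algebra on invariant subspaces plus a convergence bookkeeping at the end.

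The key lemma I would establish first is that a nonzero compact self-adjoint $S$ possesses an eigenvalue $\lambda_1$ with $|\lambda_1|=\|S\|$. For this I would invoke the identity $\|S\|=\sup_{\|\psi\|=1}|\inner{S\psi}{\psi}|$, valid for self-adjoint operators, to select unit vectors $\psi_n$ with $\inner{S\psi_n}{\psi_n}\to\lambda_1$, where after passing to a subsequence $\lambda_1$ equals $+\|S\|$ or $-\|S\|$ (the quantities $\inner{S\psi_n}{\psi_n}$ being real by self-adjointness). The decisive computation is
\[ \|S\psi_n-\lambda_1\psi_n\|^2=\|S\psi_n\|^2-2\lambda_1\inner{S\psi_n}{\psi_n}+\lambda_1^2\leq 2\lambda_1^2-2\lambda_1\inner{S\psi_n}{\psi_n}\to 0, \]
using $\|S\psi_n\|\leq\|S\|=|\lambda_1|$. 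Thus $S\psi_n-\lambda_1\psi_n\to 0$. Compactness now enters: $\{S\psi_n\}$ admits a convergent subsequence $S\psi_{n_k}\to v$, whence $\lambda_1\psi_{n_k}\to v$ as well, so $\psi_{n_k}\to\varphi_1:=v/\lambda_1$, a unit vector satisfying $S\varphi_1=\lambda_1\varphi_1$ by continuity of $S$.

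With the lemma in hand I would proceed inductively. Given orthonormal eigenvectors $\varphi_1,\dots,\varphi_n$ with eigenvalues $\lambda_1,\dots,\lambda_n$, the orthogonal complement $V_n=\{\varphi_1,\dots,\varphi_n\}^\perp$ is invariant under $S$ because $S$ is self-adjoint, and $S|_{V_n}$ is again compact and self-adjoint, so the lemma yields $\varphi_{n+1}\in V_n$ with $|\lambda_{n+1}|=\|S|_{V_n}\|$. By construction $|\lambda_1|\geq|\lambda_2|\geq\cdots$, and the procedure either terminates, when $S|_{V_n}=0$ and $S$ has finite rank, or produces infinitely many eigenvalues; in the latter case $|\lambda_n|\to 0$, since otherwise the images $S\varphi_n=\lambda_n\varphi_n$ of the orthonormal sequence $\{\varphi_n\}$ would have no convergent subsequence, contradicting compactness.

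It remains to assemble the basis and identify the mode of convergence. Let $W$ be the closed span of all the $\varphi_k$ produced above; on $W^\perp$ the operator $S$ must vanish, for any nonzero restriction would furnish a further eigenvalue by the lemma, contradicting maximality. Hence $W^\perp=\ker S$, and adjoining any orthonormal basis of $\ker S$ (each vector carrying eigenvalue $0$) completes $\{\varphi_k\}$ to an orthonormal basis of $L^2(\Rd)$ diagonalizing $S$. Finally, for the partial sums $S_N=\sum_{k=1}^N\lambda_k\,\varphi_k\otimes\varphi_k$ one checks that $S-S_N$ acts as $S|_{V_N}$ on $V_N$ and as $0$ on its complement, so $\|S-S_N\|=|\lambda_{N+1}|\to 0$, which is precisely convergence in the operator norm. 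The only genuinely delicate point is the variational lemma, and more specifically the upgrade from $S\psi_n-\lambda_1\psi_n\to 0$ to the existence of an honest eigenvector, which is exactly where compactness of $S$ is used.
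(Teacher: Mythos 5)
Your proof is correct. Note, however, that the paper does not prove this proposition at all: it is quoted as a known result (the spectral theorem for compact self-adjoint operators) with a citation to Busch et al., \emph{Quantum Measurement}, Thm.~3.5, so there is no in-paper argument to compare against. Your variational construction --- extracting an eigenvector of maximal modulus via $\|S\|=\sup_{\|\psi\|=1}|\inner{S\psi}{\psi}|$ and the norm computation $\|S\psi_n-\lambda_1\psi_n\|^2\le 2\lambda_1^2-2\lambda_1\inner{S\psi_n}{\psi_n}\to 0$, then iterating on invariant orthogonal complements --- is the standard textbook route, and every step checks out, including the identification $\|S-S_N\|=|\lambda_{N+1}|$ for the operator-norm convergence. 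The one place worth tightening is the claim that $S$ vanishes on $W^\perp$: in the infinite-rank case the appeal to ``maximality'' is slightly loose, and the clean argument is that $W^\perp\subset V_n$ for every $n$, so $\|S\psi\|\le\|S|_{V_n}\|\,\|\psi\|=|\lambda_{n+1}|\,\|\psi\|\to 0$ for $\psi\in W^\perp$. This is a cosmetic fix, not a gap.
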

\subsubsection{The trace and trace class operators}
For a \textit{positive} operator $S\in \bo$, one can define the \textit{trace} of $S$ by 
\begin{equation} \label{eq:trace} 
  \tr(S)=\sum_{k\in \N} \inner{Se_k}{e_k},
\end{equation}
where $\{e_k\}_{k\in \N}$ is an orthonormal basis for $L^2(\Rd)$. The Banach space $\tco$ of \textit{trace class operators} consists of those compact operators $S$ where $\tr(|S|)<\infty$, with norm $\|S\|_{\tco}=\tr(|S|)$. The trace in \eqref{eq:trace} defines a linear functional on $\tco$ that satisfies $\tr(ST)=\tr(TS)$, and the definition in \eqref{eq:trace} is independent of the orthonormal basis used \cite{Busch:2016}. By a celebrated theorem due to Lidskii, for $S\in \tco$, 
\begin{equation}\label{eq:lidskii}
	\tr(S)=\sum_{k=1}^{\infty}\lambda_k
\end{equation}
where the eigenvalues $\{\lambda_k\}_{k\in \N}$ of $S$ are counted with multiplicity \cite{Simon:2010wc}. 
\subsubsection{The Weyl transform} 
An important concept for associating operators on $L^2(\Rd)$ with functions on $\Rdd$ is the \textit{Weyl transform}. If $\phi\in \mathcal{S}'(\Rdd)$, then we define the Weyl transform $\phi^w$ as an operator $\mathcal{S}(\Rdd)\to \mathcal{S}'(\Rdd)$ by
\begin{equation*}
\inner{\phi^w \xi}{\psi}=\inner{\phi}{W(\psi,\xi)} \hskip 1em \text{ for } \xi,\psi \in \mathcal{S}(\Rd),
\end{equation*}
where the bracket denotes the action of $\mathcal{S}'(\Rd)$ as functionals on $\mathcal{S}(\Rd)$. We call $\phi$ the \textit{Weyl symbol} of the operator $\phi^w$. For more information on the Weyl transform in the same spirit as this short introduction, such as conditions to ensure $\phi^w\in \bo$, we refer to \cite{Grochenig:2001}.
\subsection{Quantum harmonic analysis} \label{sec:quantumharmonic}
This section introduces the theory of convolutions of operators and functions due to Werner \cite{Werner:1984}. For $z\in \R^{2d}$ and $A\in \bo$, we define the operator $\alpha_z(A)$ by
\begin{equation*}
  \alpha_z(A)=\pi(z)A\pi(z)^*.
\end{equation*}
It is easily confirmed that $\alpha_z\alpha_{z'}=\alpha_{z+z'}$, and we will informally think of $\alpha$ as a shift or translation of operators. 

Similarly we define the analogue of the involution $\check{f}(z):=f(-z)$ of a function, for an operator $A\in \bo$ by
\begin{equation*}
  \check{A}=PAP,
\end{equation*}
where $P$ is the parity operator $P\psi(x)=\psi(-x)$ for $\psi \in L^2(\R^d)$. 

Using $\alpha$, Werner defined a convolution operation between functions and operators \cite{Werner:1984}. If $f \in L^1(\R^{2d})$ and $S \in \tco$ we define the \textit{operator} $f\star S$ by
\begin{equation*}
  f\star S := S\star f := \int_{\R^{2d}}f(z)\alpha_z(S) \ dz
\end{equation*}
where the integral is interpreted in the weak sense by requiring that
\begin{equation*}
	\inner{(f\star S)\psi}{\xi})=\int_{\Rdd} f(z) \inner{\alpha_z(S)\psi}{\xi} \ dz, \hskip 1em \text{ for $\psi,\xi \in L^2(\Rd)$.}
\end{equation*}
Then $f\star S \in \tco$ and $\|f\star S \|_{\tco}\leq \|f \|_{L^1}\|S\|_{\tco}$ \cite[Prop. 2.5]{Luef:2017vs}.

For two operators $S,T \in \tco$, Werner defined the \textit{function} $S\star T$ by 
\begin{equation} \label{eq:defconv}
  S \star T(z) = \tr(S\alpha_z (\check{T})) \hskip 1em \text{for } z\in \R^{2d}.
\end{equation}

\begin{rem}
	The notation $\star$ may therefore denote either the convolution of two functions or the convolution of an operator with a function. The correct interpretation will be clear from the context.
\end{rem}
The following result relates the convolutions of operators to the standard convolutions of Weyl symbols. The statements follow by combining propositions 3.12 and 3.16(5) in \cite{Luef:2018}.
\begin{prop} \label{prop:weylconvolutions}
	Let $f\in L^1(\Rdd)$ and $S,T\in \tco$. Let $\phi_S$ and $\phi_T$ be the Weyl symbols of $S$ and $T$. Then
	\begin{enumerate}
		\item $S\star T(z)=\phi_S \ast \phi_T(z)$ for $z\in \Rdd$.
		\item The Weyl symbol of $f\star S$ is $f\ast \phi_S$.
	\end{enumerate}
	Here $\ast$ denotes the usual convolution of functions.
\end{prop}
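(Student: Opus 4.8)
The plan is to reduce both identities to three standard correspondences of the Weyl calculus, each translating an operation on operators into an operation on Weyl symbols. First I would record the \emph{covariance} of the Weyl transform under time-frequency shifts and under parity: if $A\in\bo$ has Weyl symbol $\phi_A$, then $\pi(z)A\pi(z)^*$ has Weyl symbol $T_z\phi_A=\phi_A(\cdot-z)$, while $PAP$ has Weyl symbol $\phi_A(-\cdot)$. Both follow from the defining relation $\inner{\phi^w\xi}{\psi}=\inner{\phi}{W(\psi,\xi)}$ together with the elementary transformation rules for the cross-Wigner distribution (see \cite{Grochenig:2001}); for the shift I would compute $\inner{\pi(z)A\pi(z)^*\xi}{\psi}=\inner{A\pi(z)^*\xi}{\pi(z)^*\psi}=\inner{\phi_A}{W(\pi(z)^*\psi,\pi(z)^*\xi)}$ and use the identity $W(\pi(z)^*\psi,\pi(z)^*\xi)=T_{-z}W(\psi,\xi)$, which after a change of variables in the pairing yields $\inner{T_z\phi_A}{W(\psi,\xi)}$; the parity case is analogous. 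Second, I would record the \emph{trace-product formula}: for $A,B$ Hilbert--Schmidt with Weyl symbols $\phi_A,\phi_B\in L^2(\Rdd)$ one has $\tr(AB)=\int_{\Rdd}\phi_A(w)\phi_B(w)\,dw$. Since $S,T\in\tco$ are in particular Hilbert--Schmidt, both $\phi_S,\phi_T$ lie in $L^2(\Rdd)$, so this applies. The cleanest derivation checks the formula on rank-one operators $\psi\otimes\xi$, whose Weyl symbol is $W(\psi,\xi)$, where it reduces to Moyal's identity, and then extends by bilinearity and density.

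For statement (1) I would combine these. Starting from the definition \eqref{eq:defconv}, $S\star T(z)=\tr(S\,\alpha_z(\check T))$. By parity covariance the Weyl symbol of $\check T=PTP$ is $\phi_T(-\cdot)$, and applying shift covariance to $\alpha_z(\check T)=\pi(z)\check T\pi(z)^*$ gives the Weyl symbol $T_z\bigl(\phi_T(-\cdot)\bigr)=\phi_T(z-\cdot)$. Feeding $A=S$ and $B=\alpha_z(\check T)$ into the trace-product formula then yields
\[
S\star T(z)=\int_{\Rdd}\phi_S(w)\,\phi_T(z-w)\,dw=(\phi_S\ast\phi_T)(z),
\]
which is exactly (1). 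The right-hand side is defined pointwise and is continuous, being a convolution of two $L^2(\Rdd)$ functions, consistent with $S\star T$ being a bounded continuous function.

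For statement (2) I would again use shift covariance, now together with linearity of the Weyl transform. The weak-sense integral defining $f\star S=\int_{\Rdd}f(z)\,\alpha_z(S)\,dz$ and the identity $\phi_{\alpha_z(S)}=T_z\phi_S$ suggest that the Weyl symbol of $f\star S$ should be $\int_{\Rdd}f(z)\,T_z\phi_S\,dz=f\ast\phi_S$. To make this rigorous I would test against the defining bracket: for $\xi,\psi\in\mathcal{S}(\Rd)$,
\[
\inner{(f\star S)\xi}{\psi}=\int_{\Rdd}f(z)\,\inner{\alpha_z(S)\xi}{\psi}\,dz=\int_{\Rdd}f(z)\,\inner{T_z\phi_S}{W(\psi,\xi)}\,dz,
\]
and then interchange the $z$-integral with the $w$-pairing to rewrite the right-hand side as $\inner{f\ast\phi_S}{W(\psi,\xi)}$, identifying $f\ast\phi_S$ as the Weyl symbol of $f\star S$. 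The main obstacle is precisely this interchange: one must justify Fubini for the double integral $\iint f(z)\,\phi_S(w-z)\,\overline{W(\psi,\xi)(w)}\,dw\,dz$. This is controlled using $f\in\Ldd$, $\phi_S\in L^2(\Rdd)$, and the rapid decay of $W(\psi,\xi)$ for Schwartz $\psi,\xi$, so that the double integral converges absolutely by Young's inequality together with Cauchy--Schwarz; the bound $\|f\star S\|_{\tco}\leq\|f\|_{L^1}\|S\|_{\tco}$ recorded earlier guarantees $f\star S\in\tco$, hence that it genuinely possesses an $L^2(\Rdd)$ Weyl symbol. The covariance identities of the first step are routine once the cross-Wigner transformation rules are in hand, and the trace-product formula is standard, so this measure-theoretic interchange is the only point requiring real care.
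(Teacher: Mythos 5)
Your proof is correct, but note that it does not (and cannot) follow the paper's own route: the paper gives no proof of this proposition at all, deferring it entirely to propositions 3.12 and 3.16(5) of \cite{Luef:2018}. What you supply instead is a self-contained derivation from three standard facts of Weyl calculus: the covariance $\pi(z)A\pi(z)^* = (\phi_A(\cdot - z))^w$, the parity covariance $PAP = (\phi_A(-\cdot))^w$, and the trace-product (Moyal) formula $\tr(AB)=\int_{\Rdd}\phi_A(w)\phi_B(w)\,dw$ for Hilbert--Schmidt $A,B$. All three are correctly stated for the paper's normalization (in particular $\psi\otimes\xi$ has Weyl symbol $W(\psi,\xi)$, and $\phi_{B^*}=\overline{\phi_B}$ is what makes the trace-product formula hold without complex conjugates), and your two computations are sound: for (1) the symbol of $\alpha_z(\check T)$ is indeed $\phi_T(z-\cdot)$, so the trace-product formula yields $\phi_S\ast\phi_T(z)$ pointwise for every $z$; for (2) the Fubini interchange is legitimately controlled by Cauchy--Schwarz in $w$ uniformly in $z$ (using $\phi_S\in L^2(\Rdd)$ and $W(\psi,\xi)\in\mathcal{S}(\Rdd)$), followed by integration against $|f|\in L^1(\Rdd)$, and the resulting identity $\inner{(f\star S)\xi}{\psi}=\inner{f\ast\phi_S}{W(\psi,\xi)}$ for all $\xi,\psi\in\mathcal{S}(\Rd)$ is precisely what identifies the Weyl symbol. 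The trade-off between the two routes: the paper's citation keeps the presentation short and consistent with its systematic reliance on \cite{Luef:2018}, while your argument makes the proposition verifiable within the paper at the cost of importing (or proving) the covariance and Moyal identities; it also makes transparent that part (1) only needs $S,T$ Hilbert--Schmidt, a mild generalization the paper's statement does not record.
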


The following result shows that $S\star T\in L^1(\R^{2d})$ for $S,T\in \tco$ and provides an important formula for its integral \cite[Lem. 3.1]{Werner:1984}. In the simplest case where $S$ and $T$ are rank-one operators, this formula is the so-called Moyal identity for the STFT\cite[p. 57]{Folland:1989}.
\begin{lem} 
\label{lem:werner}
Let $S,T \in \tco$. The function $z \mapsto S\star T(z)$ for $z\in \R^{2d}$ is integrable and $\|S\star T \|_{L^1} \leq \|S\|_{\tco} \|T\|_{\tco}$. Furthermore,
\begin{equation*}
	\int_{\R^{2d}} S\star T(z) \ dz = \tr(S)\tr(T).
\end{equation*}
\end{lem}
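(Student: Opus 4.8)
The plan is to reduce the statement to the rank-one case by means of the singular value (Schmidt) decomposition and then invoke the Moyal identity. Before expanding, I would record the pointwise estimate that makes the reduction legitimate: for fixed $z$, using $|\tr(AB)|\leq \|A\|_{\tco}\|B\|_{\bo}$ together with the unitarity of $\pi(z)$ and $P$,
\begin{equation*}
  |S\star T(z)|=|\tr(S\alpha_z(\check T))|\leq \|S\|_{\tco}\|\alpha_z(\check T)\|_{\bo}=\|S\|_{\tco}\|T\|_{\bo}.
\end{equation*}
This shows that for each fixed $z$ the map $(S,T)\mapsto S\star T(z)$ is bilinear and jointly continuous in the trace norm, which is what will allow me to expand $S$ and $T$ in trace-norm convergent series and pass the pointwise identity to the individual rank-one summands.

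Next I would compute the rank-one case explicitly. A direct calculation gives $\check{(\eta\otimes\xi)}=\check\eta\otimes\check\xi$ and $\alpha_z(\check\eta\otimes\check\xi)=(\pi(z)\check\eta)\otimes(\pi(z)\check\xi)$, and evaluating the trace of the resulting product of rank-one operators yields
\begin{equation*}
  (\psi\otimes\phi)\star(\eta\otimes\xi)(z)=V_{\check\xi}\psi(z)\,\overline{V_{\check\eta}\phi(z)}.
\end{equation*}
The Moyal identity \cite{Folland:1989} then supplies both conclusions at once. For the integral, $\int_{\Rdd}(\psi\otimes\phi)\star(\eta\otimes\xi)(z)\,dz=\inner{V_{\check\xi}\psi}{V_{\check\eta}\phi}=\inner{\psi}{\phi}\,\overline{\inner{\check\xi}{\check\eta}}=\inner{\psi}{\phi}\inner{\eta}{\xi}$, where I used $\inner{\check\xi}{\check\eta}=\inner{\xi}{\eta}$ and recognized $\inner{\psi}{\phi}=\tr(\psi\otimes\phi)$, $\inner{\eta}{\xi}=\tr(\eta\otimes\xi)$. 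For the norm, Cauchy--Schwarz and Moyal give $\|(\psi\otimes\phi)\star(\eta\otimes\xi)\|_{L^1}\leq \|V_{\check\xi}\psi\|_{L^2}\|V_{\check\eta}\phi\|_{L^2}=\|\psi\|\,\|\phi\|\,\|\eta\|\,\|\xi\|$.

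Finally I would assemble the general case. Via the polar decomposition and Proposition \ref{prop:singval} applied to $|S|$ and $|T|$, write $S=\sum_k s_k\,\psi_k\otimes\phi_k$ and $T=\sum_j t_j\,\eta_j\otimes\xi_j$ with $\{\psi_k\},\{\phi_k\},\{\eta_j\},\{\xi_j\}$ orthonormal systems and $\sum_k s_k=\|S\|_{\tco}$, $\sum_j t_j=\|T\|_{\tco}$. Bilinearity gives $S\star T=\sum_{k,j}s_k t_j\,(\psi_k\otimes\phi_k)\star(\eta_j\otimes\xi_j)$, and the rank-one $L^1$ bound (with unit vectors) yields $\sum_{k,j}s_k t_j\,\|(\psi_k\otimes\phi_k)\star(\eta_j\otimes\xi_j)\|_{L^1}\leq \|S\|_{\tco}\|T\|_{\tco}<\infty$. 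This absolute summability in $L^1$ is exactly what is needed: it shows $S\star T\in L^1(\Rdd)$ with $\|S\star T\|_{L^1}\leq\|S\|_{\tco}\|T\|_{\tco}$, and it licenses interchanging the double sum with the integral, giving $\int_{\Rdd}S\star T=\sum_{k,j}s_k t_j\inner{\psi_k}{\phi_k}\inner{\eta_j}{\xi_j}=\tr(S)\tr(T)$ after factoring the sum and recalling $\tr(S)=\sum_k s_k\inner{\psi_k}{\phi_k}$.

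The main obstacle is precisely the justification of these interchanges: swapping the double series with the trace in the pointwise identity and with the Lebesgue integral in the norm and integral formulas. The joint continuity estimate of the first paragraph controls the pointwise (in $z$) convergence coming from the trace-norm convergence of the Schmidt series, while the rank-one $L^1$ bound furnishes the summable dominating majorant required to apply Fubini--Tonelli and dominated convergence. The delicate point to handle with care is that the Schmidt expansions converge in trace norm rather than pointwise in $z$, so the argument must first pass through the fixed-$z$ trace-norm estimate and only then integrate.
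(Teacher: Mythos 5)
Your proof is correct. Note first that the paper does not actually prove this lemma: it is quoted verbatim from Werner \cite[Lem. 3.1]{Werner:1984}, with only the remark that the rank-one case is Moyal's identity. Your argument is therefore a self-contained reconstruction rather than a parallel of anything in the text, and it is the natural one: the pointwise bound $|S\star T(z)|\leq \|S\|_{\tco}\|\alpha_z(\check{T})\|_{\bo}=\|S\|_{\tco}\|T\|_{\bo}$ gives the joint trace-norm continuity needed to expand both operators in their Schmidt series, the rank-one computation $(\psi\otimes\phi)\star(\eta\otimes\xi)(z)=V_{\check{\xi}}\psi(z)\overline{V_{\check{\eta}}\phi(z)}$ is right (I checked the identities $\check{(\eta\otimes\xi)}=\check{\eta}\otimes\check{\xi}$ and $\alpha_z(a\otimes b)=(\pi(z)a)\otimes(\pi(z)b)$), and Moyal plus Cauchy--Schwarz give the integral formula and the $L^1$ bound with constant $\|\psi\|\|\phi\|\|\eta\|\|\xi\|$, so that Tonelli applied to $\sum_{k,j}s_kt_j$ closes the argument. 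Two small points worth making explicit if you write this up: Proposition \ref{prop:singval} as stated covers only self-adjoint operators, so the polar-decomposition step you mention is genuinely needed to get $S=\sum_k s_k\,\psi_k\otimes\phi_k$ with $\sum_k s_k=\|S\|_{\tco}$; and the identification of the $L^1$-limit of the partial sums with the everywhere-defined pointwise limit $z\mapsto\tr(S\alpha_z(\check{T}))$ deserves one sentence (absolute convergence in $L^1$ forces a.e.\ agreement with the pointwise limit, which here is even uniform since each normalized rank-one term is bounded by $1$ in sup norm). Your use of $\tr(S)=\sum_k s_k\inner{\psi_k}{\phi_k}$ follows from trace-norm continuity of the trace and does not require Lidskii.
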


The convolutions can be defined on other $L^p$-spaces and Schatten $p$-classes by duality \cite{Luef:2017vs,Werner:1984}. As a special case we mention that \eqref{eq:defconv} defines a continuous function even when $T\in \bo$ \cite{Luef:2017vs}; in particular it is clear from \eqref{eq:defconv} that 
\begin{equation} \label{eq:identity}
	S\star I(z)=\tr(S)
\end{equation}
 for any $z\in \Rdd$ when $I$ is the identity operator and $S\in \tco$.
The convolutions of operators and functions are associative, a fact that is non-trivial since the convolutions between operators and functions can produce both operators and functions as output \cite{Luef:2017vs, Werner:1984}. Commutativity and bilinearity, however, follows straight from the definitions. We will also need the following simple property.
\begin{lem} \label{lem:inversion}
	Let $S\in \tco$ be a positive operator. If $\{\xi_n\}_{n\in \N}$ is an orthonormal basis for $L^2(\Rd)$, then
	 \begin{equation*}
		\sum_{n=1}^\infty S\star (\xi_n \otimes \xi_n)(z)=\tr(S), \hskip 1em \text{for any } z\in \Rdd.
	\end{equation*}
\end{lem}
\begin{proof}
 A simple calculation (see the proof of \cite[Thm. 1.5]{Luef:2017vs}) shows that 
\begin{equation*}
	S\star (\xi_n \otimes \xi_n)(z)=\inner{\check{S}\pi(-z)\xi_n}{\pi(-z)\xi_n}.
\end{equation*}
By proposition \ref{prop:singval}, $\check{S}$ has a spectral representation  $$\check{S}=\sum_{k=1}^\infty \lambda_k \varphi_k\otimes \varphi_k,$$ where  $\{\varphi_k\}_{k\in \N}$ is an orthonormal basis for $L^2(\Rd)$ and $\{\lambda_k\}_{k\in \N}$ are the eigenvalues of $\check{S}$. We insert this into the previous formula and apply Parseval's theorem to get
\begin{align*}
	\sum_{n=1}^\infty S\star (\xi_n \otimes \xi_n)(z)&=\sum_{n=1}^\infty \inner{\sum_{k=1}^\infty \lambda_k \varphi_k\otimes \varphi_k\pi(-z)\xi_n}{\pi(-z)\xi_n} \\
	&= \sum_{n=1}^\infty \sum_{k=1}^\infty \lambda_k \inner{\pi(-z)\xi_n}{\varphi_k}\inner{\varphi_k}{\pi(-z)\xi_n} \\
	&= \sum_{k=1}^\infty \lambda_k \sum_{n=1}^\infty \inner{\varphi_k}{\pi(-z)\xi_n} \inner{\pi(-z)\xi_n}{\varphi_k} \\
		&= \sum_{k=1}^\infty \lambda_k \inner{\varphi_k}{\varphi_k}  = \sum_{k=1}^\infty \lambda_k =\tr(\check{S})= \tr(S).\\
\end{align*}
The final line uses \eqref{eq:lidskii}, and that $\tr(\check{S})=\tr(PSP)=\tr(P^2 S)=\tr(S).$
Note that we used that $\pi(-z)$ is unitary to get that $\{\pi(-z)\xi_n\}_{n\in \N}$ is an orthonormal basis.
	
\end{proof}

The convolutions preserve positivity \cite[Lem. 4.1]{Luef:2018:physics} .
\begin{lem}
 \label{lem:positiveandidentity}
 If $S,T \in \bo$ are positive operators and $f$ is a positive function, then $f\star S$ is a positive operator and $S\star T$ is a positive function.	  
\end{lem}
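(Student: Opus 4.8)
The plan is to prove the two assertions separately, in each case deducing positivity of the output from positivity of the inputs together with the observation that conjugation by a unitary --- in particular by $\pi(z)$, and by the parity operator $P$ --- preserves positivity.

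First I would show that $f\star S$ is a positive operator directly from the weak definition. For arbitrary $\psi\in L^2(\Rd)$,
\[
\inner{(f\star S)\psi}{\psi}=\int_{\Rdd} f(z)\,\inner{\alpha_z(S)\psi}{\psi}\,dz .
\]
Since $\pi(z)$ is unitary, $\inner{\alpha_z(S)\psi}{\psi}=\inner{\pi(z)S\pi(z)^*\psi}{\psi}=\inner{S\pi(z)^*\psi}{\pi(z)^*\psi}\geq 0$ because $S\geq 0$. As $f\geq 0$, the integrand is nonnegative for every $z$, hence $\inner{(f\star S)\psi}{\psi}\geq 0$ and $f\star S$ is positive. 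I expect no real difficulty here.

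Next I would turn to $S\star T(z)=\tr(S\,\alpha_z(\check{T}))$ and show that the operator inside the trace is, after a cyclic rearrangement, a positive trace class operator. First, $\check{T}=PTP$ is positive: as $P$ is self-adjoint with $P^2=I$, one has $\inner{PTP\phi}{\phi}=\inner{TP\phi}{P\phi}\geq 0$ for all $\phi$. Conjugating by the unitary $\pi(z)$ gives, by the same computation as above, that $\alpha_z(\check{T})=\pi(z)\check{T}\pi(z)^*$ is positive as well. Thus $S\star T(z)$ is the trace of a product of two positive operators, one of which is trace class (using commutativity of $\star$ we may assume it is $S$), and the claim reduces to the fact that such a trace is nonnegative.

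The main --- and essentially only --- technical point is this last fact. I would handle it by writing $S=S^{1/2}S^{1/2}$ and invoking cyclicity of the trace:
\[
\tr\bigl(S\,\alpha_z(\check{T})\bigr)=\tr\bigl(S^{1/2}\,\alpha_z(\check{T})\,S^{1/2}\bigr).
\]
Here $S^{1/2}\alpha_z(\check{T})S^{1/2}$ is positive, since its quadratic form is $\inner{\alpha_z(\check{T})S^{1/2}\phi}{S^{1/2}\phi}\geq 0$, and it is trace class because $S^{1/2}$ is Hilbert--Schmidt and $\alpha_z(\check{T})$ is bounded; by Lidskii's theorem \eqref{eq:lidskii} its trace equals the sum of its nonnegative eigenvalues and is therefore nonnegative. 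Hence $S\star T(z)\geq 0$ for every $z\in\Rdd$, which completes the argument.
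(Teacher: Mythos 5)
Your proof is correct and is essentially the standard argument: the paper gives no proof of this lemma itself but cites \cite[Lem.~4.1]{Luef:2018:physics}, where positivity of $f\star S$ is read off from the weak definition exactly as you do, and positivity of $S\star T$ comes from $\tr(S\,A)=\tr(S^{1/2}A\,S^{1/2})\geq 0$ for positive $A=\alpha_z(\check{T})$. Your parenthetical observation that one of the two operators must be trace class for the trace in \eqref{eq:defconv} to make sense is the right reading of the (loosely stated) hypothesis $S,T\in \bo$, and your use of Lidskii could be replaced by the simpler remark that the trace \eqref{eq:trace} of a positive trace class operator is a sum of nonnegative terms.
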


\section{Cohen's class and mixed-state localization operators} \label{sec:genloc} A quadratic time-frequency distribution $Q$ is said to be of \textit{Cohen's class} if $Q$ is given by 
\begin{equation*}
  Q(\psi)=Q_{\phi}(\psi):=W(\psi,\psi)\ast \phi
\end{equation*}
for some $\phi \in \mathcal{S}^{\prime}(\R^{2d})$ \cite{Cohen:1966,Grochenig:2001}. In \cite{Luef:2018} we emphasized another way of defining Cohen's class, namely that $Q$ belongs to Cohen's class if $Q$ is given by
\begin{equation} \label{eq:defofcohen}
  Q(\psi)=Q_S(\psi) = \check{S}\star (\psi\otimes \psi),
\end{equation}
where $S:\mathcal{S}(\Rdd)\to \mathcal{S}^\prime(\Rdd)$ is a continuous linear operator. It can be shown using proposition \ref{prop:weylconvolutions} that these two definitions are equivalent \cite{Luef:2018}, since 
\begin{equation} \label{eq:coheneq}
Q_\phi=Q_S	\hskip 1em \text{ when } \phi^w=\check{S}.
\end{equation}
We will be particularly interested in $Q_S$ when $S$ is a \textit{positive trace class operator with $\tr(S)=1$}. In quantum mechanics, such operators are often called \textit{density operators}, and we will adopt this terminology in this paper. There is a simple characterization of those Cohen's class distributions $Q_S$ where $S$ is a density operator \cite{Luef:2018}.
\begin{prop} \label{prop:densitycohen}
	Let $S\in \bo$. $S$ is a density operator if and only if for any $\psi\in L^2(\Rd)$ $Q_S(\psi)$ is a positive function and  $\int_{\Rdd} Q_S(\psi)(z) \ dz=\|\psi\|^2_{L^2}. $
\end{prop}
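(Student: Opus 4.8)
The plan is to prove both implications, treating the forward direction as a direct application of the quantum harmonic analysis already assembled and reserving the real work for the converse. For the forward direction, suppose $S$ is a density operator. Then $\check{S}=PSP$ is again positive and trace class with $\tr(\check{S})=\tr(S)=1$, while $\psi\otimes\psi$ is positive and trace class with $\tr(\psi\otimes\psi)=\|\psi\|_{L^2}^2$. Positivity of $Q_S(\psi)=\check{S}\star(\psi\otimes\psi)$ then follows immediately from Lemma \ref{lem:positiveandidentity}, and the normalization
\[
  \int_{\Rdd}Q_S(\psi)(z)\,dz=\tr(\check{S})\tr(\psi\otimes\psi)=\|\psi\|_{L^2}^2
\]
is exactly Lemma \ref{lem:werner}. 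This half requires no new computation.

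For the converse I would first record the pointwise identity that drives everything. Unwinding the definition \eqref{eq:defconv}, using that the involution of $\psi\otimes\psi$ equals $\check{\psi}\otimes\check{\psi}$ and that $\alpha_z(\check{\psi}\otimes\check{\psi})=(\pi(z)\check{\psi})\otimes(\pi(z)\check{\psi})$, together with the rank-one trace rule $\tr(A\,\eta\otimes\eta)=\inner{A\eta}{\eta}$, gives
\[
  Q_S(\psi)(z)=\inner{\check{S}\pi(z)\check{\psi}}{\pi(z)\check{\psi}}.
\]
From here positivity of $S$ is easy: for fixed $z$, as $\psi$ ranges over $L^2(\Rd)$ the vector $\pi(z)\check{\psi}$ ranges over all of $L^2(\Rd)$, so the assumed nonnegativity of $Q_S(\psi)(z)$ forces $\inner{\check{S}\eta}{\eta}\geq 0$ for every $\eta$, i.e.\ $\check{S}\geq 0$ and hence $S=P\check{S}P\geq 0$.

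The crux is the trace condition, and here the main obstacle is that a priori $S$ is only a bounded operator, so none of the trace-class tools (such as Lemma \ref{lem:inversion} or \eqref{eq:identity}) are available, and finiteness of the trace must itself be deduced from the normalization hypothesis. My plan is to exploit $\check{S}\geq 0$ by factoring $\check{S}=R^2$ with $R=\check{S}^{1/2}$, so that $Q_S(\psi)(z)=\|R\pi(z)\check{\psi}\|^2$. Expanding this squared norm in an orthonormal basis $\{e_n\}$ and recognizing $\inner{\pi(z)\check{\psi}}{Re_n}=\overline{V_{\check{\psi}}(Re_n)(z)}$, I would interchange sum and integral (legitimate by Tonelli, since all terms are nonnegative) and apply the orthogonality relations for the STFT (Moyal's identity) $\int_{\Rdd}|V_{\check{\psi}}(Re_n)(z)|^2\,dz=\|Re_n\|^2\|\check{\psi}\|^2$ to obtain
\[
  \int_{\Rdd}Q_S(\psi)(z)\,dz=\|\psi\|_{L^2}^2\sum_{n}\|Re_n\|^2=\|\psi\|_{L^2}^2\,\tr(\check{S}),
\]
using $\|\check{\psi}\|=\|\psi\|$ and $\sum_n\|Re_n\|^2=\sum_n\inner{\check{S}e_n}{e_n}=\tr(\check{S})$. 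Comparing with the hypothesis $\int_{\Rdd}Q_S(\psi)=\|\psi\|_{L^2}^2$ for any $\psi\neq 0$ yields $\tr(\check{S})=1$. Since a positive operator with finite trace is automatically trace class, $\check{S}$ --- and therefore $S$, with $\tr(S)=\tr(\check{S})=1$ --- is a density operator, completing the proof. The only points demanding care are the justification of the interchange of summation and integration and the observation that finiteness of $\tr(\check{S})$ (rather than its mere definability in $[0,\infty]$) is precisely what the normalization delivers.
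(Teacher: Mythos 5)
Your proof is correct. Note that the paper itself gives no proof of Proposition \ref{prop:densitycohen}; it is quoted from \cite{Luef:2018}, so there is no in-paper argument to compare against. Your forward direction is exactly the intended one-liner via Lemma \ref{lem:positiveandidentity} and Lemma \ref{lem:werner}. For the converse, your key identity $Q_S(\psi)(z)=\inner{\check{S}\pi(z)\check{\psi}}{\pi(z)\check{\psi}}$ is the same formula the paper records (in the equivalent form $\inner{S\pi(-z)\psi}{\pi(-z)\psi}$, inside the proof of Lemma \ref{lem:inversion}), and it correctly yields $\check{S}\geq 0$, hence $S\geq 0$. The one place where care is genuinely needed is the trace condition, since $S$ is a priori only bounded, and you handle it properly: writing $Q_S(\psi)(z)=\|\check{S}^{1/2}\pi(z)\check{\psi}\|^2=\sum_n|V_{\check{\psi}}(\check{S}^{1/2}e_n)(z)|^2$, invoking Tonelli for the nonnegative terms, and applying Moyal's identity gives $\int_{\Rdd}Q_S(\psi)=\|\psi\|_{L^2}^2\,\tr(\check{S})$ with the trace taken in $[0,\infty]$, so the normalization hypothesis forces $\tr(\check{S})=1<\infty$ and hence trace-classness. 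This is a complete and self-contained argument; the only stylistic remark is that one could shortcut the positivity step by evaluating at $z=0$ alone, and that the final computation is essentially a hands-on re-derivation of Lemma \ref{lem:werner} in the setting where only positivity, not trace-classness, is known in advance.
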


In light of \eqref{eq:coheneq}, the set of Cohen's class distributions $Q_S$ with $S$ a density operator equals $\{Q_\phi:\phi\in \states\}$, where
\begin{equation*}
	\states := \{\phi \in \mathcal{S}(\Rdd): \phi^w \text{ is a density operator }\}.
\end{equation*}
\begin{rem}
Due to \eqref{eq:coheneq}, the operator $\check{S}$ will appear many times. The reader should therefore note that $\check{S}$ and $S$ are unitarily equivalent by definition, and share relevant properties such as positivity and trace.
\end{rem}
In \cite{Luef:2018} we also introduced the notion of a \textit{mixed-state localization operator}, which is an operator of the form $\chi_{\Omega}\star S$ where $S$ is a density operator and $\chi_\Omega$ the characteristic function of a domain $\Omega \subset \Rdd$. By definition $\chi_{\Omega}\star S$ acts on $\psi \in L^2(\Rd)$ by 
\begin{equation*}
	(\chi_\Omega \star S) \psi = \int_{\Omega} \pi(z)S\pi(z)^* \psi \ dz.
\end{equation*}
The simplest examples of density operators are given by the rank-one operators $\varphi \otimes \varphi$ for $\varphi\in L^2(\Rd)$ with $\|\varphi\|_{L^2}=1$. In this case, the Cohen class distribution $Q_{\varphi \otimes \varphi}$ is the \textit{spectrogram}:
\begin{equation} \label{eq:spectrogram}
  Q_{\varphi \otimes \varphi}(\psi)=(\check{\varphi} \otimes \check{\varphi}) \star (\psi\otimes \psi)=|V_{\varphi}\psi|^2,
\end{equation}
and the mixed-state localization operators $\chi_{\Omega}\star (\varphi \otimes \varphi)$ are the usual localization operators introduced by Daubechies \cite{Daubechies:1988}, which act on $\psi \in L^2(\Rd)$ by
\begin{equation*}
  (\chi_{\Omega}\star (\varphi \otimes \varphi))(\psi)=\int_{\Omega} V_{\varphi}\psi(z) \pi(z) \varphi \ dz. 
\end{equation*}

\begin{rem}
	In quantum mechanics, a rank-one operator $\varphi\otimes \varphi$ describes a so-called pure state of a system \cite{deGosson:2011wq}. More general states are called mixed states, and are described by density operators -- hence the terminology of mixed-state localization operators.
\end{rem}

\subsection{Notation for mixed-state localization operators} \label{sec:notation}
In order to fix notation, we briefly consider the spectral representation of mixed-state localization operators. If $\Omega \subset \Rdd$ is compact and $S$ is a density operator, we know from lemma \ref{lem:positiveandidentity} and section \ref{sec:quantumharmonic} that $\chi_\Omega \star S$ is a positive trace class operator. For the rest of the paper we will denote the eigenvalues of $\chi_\Omega \star S$ by $\{\lambda_k^\Omega\}_{k\in \N}$ and the orthonormal basis formed by its eigenfunctions by $\{h_k^\Omega\}_{k\in \N}$, thus the spectral representation is
\begin{equation} \label{eq:singvalgenloc}
	\chi_\Omega \star S = \sum_{k=1}^\infty \lambda_k^\Omega h_k^\Omega \otimes h_k^\Omega.
\end{equation}
We always assume that the eigenvalues are in decreasing order, i.e. $\lambda_1^\Omega\geq \lambda_2^\Omega \geq \dots$. \\
The function $S\star \check{S}$, for some operator $S\in \tco$, will play an important role in our results. To emphasize this, we introduce the notation $$\tilde{S}(z):=S\star \check{S}(z).$$
If $S$ is a density operator, it follows from section \ref{sec:quantumharmonic} that $\tilde{S}$ is a positive, continuous function such that $\int_{\Rdd} \tilde{S}(z) \ dz=\tr(S)\tr(S)=1.$ In the special case where $S=\varphi\otimes \varphi$ for some $\varphi \in L^2(\Rd)$, we get by \eqref{eq:spectrogram} that $\tilde{S}(z)=|V_\varphi \varphi(z)|^2$.
\subsection{A consequence of associativity}
As we have mentioned, the associativity of the convolutions introduced in section \ref{sec:quantumharmonic} is non-trivial. It leads to the following relation between Cohen's class distributions and mixed-state localization operators, see \cite[Lem. 4.1]{Abreu:2016} for an alternative proof for spectrograms.

\begin{prop} \label{prop:associativity}
	Let $S$ be a density operator and let $\Omega \subset \Rdd$ be a compact set. Then 
	\begin{equation*}
		\chi_\Omega \ast \tilde{S}(z)=\sum_{k=1}^\infty \lambda_k^\Omega Q_S(h_k^\Omega)(z), \hskip 1em \text{for } z\in \Rdd.
	\end{equation*}
\end{prop}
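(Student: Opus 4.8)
The plan is to reduce the claim to the associativity of Werner's convolutions together with the spectral decomposition \eqref{eq:singvalgenloc}. First I would rewrite the left-hand side using the notation $\tilde{S}=S\star\check{S}$, so that $\chi_\Omega \ast \tilde{S} = \chi_\Omega \ast (S\star \check{S})$. Since $\Omega$ is compact we have $\chi_\Omega\in L^1(\Rdd)$, and both $S,\check{S}\in\tco$, so all three objects live in spaces on which the convolutions of Section \ref{sec:quantumharmonic} are defined. Invoking associativity, I would move the function convolution inside and convert it into an operator--operator convolution:
\begin{equation*}
\chi_\Omega \ast (S\star \check{S}) = (\chi_\Omega \star S)\star \check{S}.
\end{equation*}
Here $\chi_\Omega \star S$ is the mixed-state localization operator, a positive trace class operator by Lemma \ref{lem:positiveandidentity}, and convolving it with the operator $\check{S}$ yields a function, exactly as required.

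Next I would substitute the spectral representation \eqref{eq:singvalgenloc}, namely $\chi_\Omega \star S = \sum_k \lambda_k^\Omega\, h_k^\Omega \otimes h_k^\Omega$, and push the convolution with $\check{S}$ through the sum. Evaluating at a fixed $z$, the definition \eqref{eq:defconv} gives $(\chi_\Omega \star S)\star \check{S}(z) = \tr\bigl((\chi_\Omega \star S)\,\alpha_z(S)\bigr)$, where I used $\check{(\check{S})}=P^2SP^2=S$. Since \eqref{eq:singvalgenloc} converges in trace norm and $\alpha_z(S)$ is bounded, the estimate $|\tr(AB)|\le \|A\|_{\tco}\|B\|_{\bo}$ lets me interchange the trace with the sum, yielding
\begin{equation*}
(\chi_\Omega \star S)\star \check{S}(z) = \sum_{k=1}^\infty \lambda_k^\Omega\, (h_k^\Omega\otimes h_k^\Omega)\star \check{S}(z).
\end{equation*}

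Finally, I would identify each term. By commutativity of the operator--operator convolution and the definition \eqref{eq:defofcohen} of Cohen's class, $(h_k^\Omega\otimes h_k^\Omega)\star \check{S} = \check{S}\star(h_k^\Omega\otimes h_k^\Omega) = Q_S(h_k^\Omega)$, which gives the claimed identity. The main obstacle I anticipate is the rigorous justification of the two limiting manipulations: invoking associativity in the precise mixed form $\chi_\Omega\ast(S\star\check{S})=(\chi_\Omega\star S)\star\check{S}$ (where one intermediate object is an operator and the other a function), and interchanging the infinite spectral sum with the convolution. Both are controlled by the continuity estimates recorded above: Lemma \ref{lem:werner}'s bound $\|S\star T\|_{L^1}\le \|S\|_{\tco}\|T\|_{\tco}$ guarantees that the resulting series converges in $L^1(\Rdd)$, while the pointwise trace argument makes the interchange valid at every $z\in\Rdd$ rather than merely almost everywhere.
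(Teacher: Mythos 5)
Your proposal is correct and follows essentially the same route as the paper: associativity to get $\chi_\Omega \ast (S\star \check{S})=(\chi_\Omega \star S)\star \check{S}$, insertion of the spectral representation \eqref{eq:singvalgenloc}, and identification of each term with $Q_S(h_k^\Omega)$ via \eqref{eq:defofcohen}; your trace-norm justification of the interchange is a fine substitute for the paper's appeal to operator-norm convergence and the continuity of $T\mapsto T\star \check{S}$ from $\bo$ to $L^\infty(\Rdd)$. The only detail to tighten is upgrading the $L^1$-identity $\chi_\Omega \ast (S\star \check{S})=(\chi_\Omega \star S)\star \check{S}$ from almost-everywhere to everywhere, which the paper settles by observing that both sides are continuous functions.
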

\begin{proof}
By the associativity of convolutions, we have that $\chi_\Omega \ast \tilde{S}=\chi_\Omega \ast (S\star \check{S})=(\chi_\Omega \star S)\star \check{S}$ in $L^1(\Rdd)\cap L^\infty(\Rdd)$. Now insert the  spectral representation from \eqref{eq:singvalgenloc}:
\begin{align*}
	(\chi_\Omega \star S)\star \check{S}&=\left(\sum_{k=1}^{\infty}\lambda_k^{\Omega} h_k^{\Omega}\otimes h_k^{\Omega}\right)\star \check{S}\\
	&= \sum_{k=1}^{\infty}\lambda_k^{\Omega} (h_k^{\Omega}\otimes h_k^{\Omega})\star \check{S} \\
	&= \sum_{k=1}^\infty \lambda_k^\Omega Q_S(h_k^\Omega).
\end{align*}
When moving to the second line, we have used that the spectral representation converges in the operator norm and that convolutions with a fixed operator is norm-continuous from $\bo$ to $L^\infty(\Rdd)$ \cite[Prop. 4.2]{Luef:2017vs}. Furthermore, $\chi_\Omega \ast (S\star \check{S})=(\chi_\Omega \star S)\star \check{S}$ holds pointwise since both sides are continuous functions -- the left side is the convolution of a bounded function $\chi_\Omega$ with $S\star \check{S}\in L^1(\Rdd)$, and the right side is the convolution of two trace class operators which is continuous by \cite[Prop. 3.3]{Luef:2017vs}.
\end{proof}
\subsection{Approximate identities for $L^1(\Rdd)$}
In the section we will obtain an approximate identity for $L^1(\Rdd)$ for each normalized trace class operator $S$. The following standard result is easily proved by straightforward calculations. 
\begin{prop}
	Let $\phi \in L^1(\Rd)$ satisfy $\int_{\Rd} \phi(x) \ dx=1.$ The family $\{\phi_R\}_{R>0}$ of normalized dilations of $\phi$ defined by $\phi_R(x)=R^{d}\phi(Rx)$ is an approximate identity for $L^1(\Rd).$
\end{prop}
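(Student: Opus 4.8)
The plan is to verify the defining property of an approximate identity, namely that $\|f \ast \phi_R - f\|_{L^1} \to 0$ as $R\to\infty$ for every $f\in L^1(\Rd)$. First I would record the three elementary facts about the dilations that drive the argument. A change of variables $y=Rx$ gives $\int_{\Rd}\phi_R(x)\,dx=\int_{\Rd}\phi(y)\,dy=1$ and $\|\phi_R\|_{L^1}=\|\phi\|_{L^1}$ for every $R$, so the family has constant integral one and uniformly bounded $L^1$-norm. The third and crucial fact is the concentration of mass at the origin: for each fixed $\delta>0$ the same substitution yields $\int_{|x|>\delta}|\phi_R(x)|\,dx=\int_{|y|>R\delta}|\phi(y)|\,dy$, which tends to $0$ as $R\to\infty$ because $\phi\in L^1(\Rd)$.

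Next, using $\int_{\Rd}\phi_R=1$ I would rewrite the difference as
\begin{equation*}
 f\ast\phi_R(x)-f(x)=\int_{\Rd}\big(T_y f(x)-f(x)\big)\,\phi_R(y)\,dy,
\end{equation*}
where $T_y f(x)=f(x-y)$. Applying Minkowski's integral inequality (justified by Tonelli, since the integrand is jointly measurable and everything in sight is integrable) then gives
\begin{equation*}
 \|f\ast\phi_R-f\|_{L^1}\leq\int_{\Rd}\|T_y f-f\|_{L^1}\,|\phi_R(y)|\,dy.
\end{equation*}

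The argument is completed by a standard split at radius $\delta$. I would invoke two properties of the translation map $y\mapsto\|T_y f-f\|_{L^1}$: it is bounded by $2\|f\|_{L^1}$ for all $y$, and it is continuous with value $0$ at $y=0$ (continuity of translation in $L^1$). Given $\varepsilon>0$, choose $\delta$ so that $\|T_y f-f\|_{L^1}<\varepsilon$ for $|y|<\delta$; the integral over $\{|y|<\delta\}$ is then at most $\varepsilon\|\phi\|_{L^1}$, while the integral over $\{|y|\geq\delta\}$ is at most $2\|f\|_{L^1}\int_{|y|\geq\delta}|\phi_R(y)|\,dy$, which vanishes as $R\to\infty$ by the concentration fact. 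Letting $R\to\infty$ and then $\varepsilon\to0$ finishes the proof.

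I expect the only genuine input beyond bookkeeping to be the continuity of translation in $L^1(\Rd)$, i.e. that $\|T_y f-f\|_{L^1}\to0$ as $y\to0$; this is the main obstacle in the sense that it is the one step that is not a formal manipulation. It is itself a well-known fact, proved by approximating $f$ in $L^1$ by compactly supported continuous functions, for which uniform continuity makes the estimate transparent.
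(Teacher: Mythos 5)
Your proof is correct and complete; the paper itself gives no argument here, dismissing the proposition as ``easily proved by straightforward calculations,'' and the standard calculation it has in mind is exactly the one you carry out (unit integral of $\phi_R$, Minkowski's integral inequality, the split at radius $\delta$ using continuity of translation in $L^1$ and the escape of mass $\int_{|y|>R\delta}|\phi(y)|\,dy\to 0$). You also correctly keep absolute values on $\phi_R$ throughout, which matters since $\phi$ is not assumed positive.
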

As a consequence we obtain the following result, which is lemma 3 in \cite{Ramanathan:1994spec} when $\phi$ is positive. 
\begin{lem}
Let $\phi\in L^1(\Rd)$ be a function with $\int_{\Rdd} \phi(z) \ dz=1$, and let $\Omega\subset \Rd$ be a compact domain. Then 
\begin{equation*}
  \frac{1}{R^{d}}\int_{R\Omega}\int_{R\Omega} \phi(x-x^{\prime}) \ dx \ dx^{\prime}\to |\Omega|
\end{equation*}
as $R\to \infty$.
\end{lem}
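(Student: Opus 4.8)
The plan is to reduce the double integral to a single convolution against $\chi_\Omega$ and then invoke the approximate-identity property from the preceding proposition. First I would perform the substitution $x = Ry$, $x' = Ry'$, which carries the region $R\Omega \times R\Omega$ onto $\Omega \times \Omega$ and produces a Jacobian factor $R^{d}$ from each of the two $d$-dimensional substitutions, hence $R^{2d}$ in total. Combined with the prefactor $1/R^{d}$ this leaves
\begin{equation*}
  R^{d}\int_\Omega \int_\Omega \phi\big(R(y-y')\big) \ dy \ dy'.
\end{equation*}
Since $R^{d}\phi(R(y-y')) = \phi_R(y-y')$, where $\phi_R(x)=R^{d}\phi(Rx)$ is exactly the normalized dilation of the previous proposition, and the inner integral over $y'\in\Omega$ is a convolution, the expression equals
\begin{equation*}
  \int_\Omega \Big(\int_\Omega \phi_R(y-y') \ dy'\Big) dy = \int_\Omega (\phi_R \ast \chi_\Omega)(y) \ dy.
\end{equation*}

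Next, because $\Omega$ is a compact domain we have $|\Omega|<\infty$ and hence $\chi_\Omega \in \Ld$. The preceding proposition states that $\{\phi_R\}_{R>0}$ is an approximate identity for $\Ld$, so $\phi_R \ast \chi_\Omega \to \chi_\Omega$ in the $L^1$-norm as $R\to\infty$. Finally I would estimate, using $\int_\Omega \chi_\Omega = |\Omega|$ together with the fact that restricting the domain of integration to $\Omega$ and passing the absolute value inside only enlarges nothing,
\begin{equation*}
  \left| \int_\Omega (\phi_R \ast \chi_\Omega)(y) \ dy - |\Omega| \right| = \left| \int_\Omega \big[(\phi_R \ast \chi_\Omega)(y) - \chi_\Omega(y)\big] \ dy \right| \leq \|\phi_R \ast \chi_\Omega - \chi_\Omega\|_{L^1},
\end{equation*}
and the right-hand side tends to $0$, which gives the claim.

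Honestly this statement is routine once set up correctly, so there is no serious obstacle; the only points demanding care are bookkeeping the powers of $R$ in the change of variables (the factor $R^{2d}$ from the two substitutions must cancel the $1/R^{d}$ prefactor to leave precisely the $R^{d}$ that converts $\phi(R\,\boldsymbol{\cdot}\,)$ into $\phi_R$), and observing that the approximate-identity conclusion, and therefore the $L^1$ convergence $\phi_R \ast \chi_\Omega \to \chi_\Omega$, requires no positivity of $\phi$ — it follows purely from $\phi\in\Ld$ and the normalization $\int_{\Rd}\phi = 1$.
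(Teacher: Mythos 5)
Your proof is correct and follows essentially the same route as the paper's: both reduce the rescaled double integral, via the substitution $u=Rx$, $v=Rx'$, to $\int_\Omega (\chi_\Omega\ast\phi_R)(y)\,dy$ and then invoke the approximate-identity property of $\{\phi_R\}_{R>0}$ together with the continuity of the functional $\psi\mapsto\int_\Omega\psi$ on $L^1(\Rd)$. The only difference is presentational (you run the argument in the opposite order), and your closing remark that positivity of $\phi$ is not needed matches the paper's own observation.
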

\begin{proof}
	By the previous proposition we know that the family $\{\phi_R\}_{R>0}$ is an approximate identity for $L^1(\Rd).$ In particular we have that $\chi_{\Omega}\ast \phi \to \chi_{\Omega}$ in $L^1(\Rd)$ as $R\to \infty$. Since $\psi \mapsto \int_{\Rd}\psi(x) \chi_{\Omega}(x) \ dx$ is a linear functional on $L^1(\Rd)$, we get as a consequence that $\int_{\Omega} \chi_{\Omega}\ast \phi_R(x) \ dx \to |\Omega|$ as $R\to \infty$. It only remains to show that $\int_{\Omega} \chi_{\Omega}\ast \phi_R(x) \ dx$ equals the left hand side in the statement of the theorem:
	\begin{align*}
  \int_{\Omega} \chi_{\Omega}\ast \phi_R \ dx &= \int_{\Omega}\int_{\Rd} \chi_{\Omega}(x^\prime)R^d\phi(R(x-x^\prime)) \ dx^\prime \ dx \\
  &= R^d\int_{\Omega}\int_{\Omega} \phi(R(x-x^\prime)) \ dx^\prime \ dx \\
  &=\frac{1}{R^d}\int_{R \Omega}\int_{R \Omega} \phi(u-v) \ du \ dv,
\end{align*}
where we have introduced the new variables $u=Rx$ and $v=Rx^\prime$.
\end{proof}

This allows us to introduce an important class of approximate identities based on trace class operators.
\begin{cor} \label{cor:appid}
	Let $S\in \tco$ be an operator with $\tr(S)=1$. The functions $\{\tilde{S}_R\}_{R>0}$ form an approximate identity for $L^1(\Rdd)$ and 
	\begin{equation*}
  \frac{1}{R^{2d}}\int_{R\Omega}\int_{R\Omega} \tilde{S}(z-z^{\prime}) \ dz \ dz^{\prime}\to |\Omega|
\end{equation*}
as $R\to \infty$ for any compact domain $\Omega \subset \Rdd$.
\end{cor}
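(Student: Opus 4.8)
The plan is to recognize this corollary as a direct application of the two immediately preceding results (the dilation proposition and the limit lemma) to the single function $\tilde{S}$, viewed on the $2d$-dimensional Euclidean space $\Rdd$. The only genuine work is to verify that $\tilde{S}$ meets the standing hypothesis of those results, namely that $\tilde{S}\in L^1(\Rdd)$ with $\int_{\Rdd}\tilde{S}(z)\,dz=1$; once this is in hand, both assertions follow by quotation.

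First I would establish the integrability and normalization. Since $\tr(S)=1$ the operator $S$ is trace class, and hence so is $\check{S}=PSP$; therefore Lemma \ref{lem:werner} applies to the pair $S,\check{S}$ and gives $\tilde{S}=S\star\check{S}\in L^1(\Rdd)$ together with the value of its integral,
\[ \int_{\Rdd}\tilde{S}(z)\,dz=\tr(S)\,\tr(\check{S}). \]
Using that $\tr(\check{S})=\tr(PSP)=\tr(P^2S)=\tr(S)=1$, exactly as computed in the proof of Lemma \ref{lem:inversion}, this integral equals $1$. Note that this step uses neither positivity of $S$ nor any property beyond the trace identity, so the hypothesis $\tr(S)=1$ suffices. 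Thus $\tilde{S}$ is precisely the kind of $L^1$-function with unit integral to which the dilation constructions above apply, with the role of $\R^d$ played by $\Rdd$.

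Second, both claims then follow by invoking the two preceding statements with $\phi=\tilde{S}$ and with $d$ replaced by $2d$, since those results rely only on the Euclidean structure of the underlying space and so transfer verbatim from $\Rd$ to $\Rdd$. The approximate-identity assertion is exactly the preceding proposition: the normalized dilations $\tilde{S}_R(z)=R^{2d}\tilde{S}(Rz)$ form an approximate identity for $L^1(\Rdd)$. The limit formula is exactly the preceding lemma applied to $\phi=\tilde{S}$ on $\Rdd$, yielding $\frac{1}{R^{2d}}\int_{R\Omega}\int_{R\Omega}\tilde{S}(z-z')\,dz\,dz'\to|\Omega|$ as $R\to\infty$ for every compact domain $\Omega\subset\Rdd$. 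I expect no substantial obstacle here: the mathematical content sits entirely in Lemma \ref{lem:werner} and in the two already-proved dilation results, and the corollary's only contribution is to identify $\tilde{S}$ as an admissible kernel and to observe that the ambient dimension doubles.
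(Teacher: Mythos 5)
Your proposal is correct and follows essentially the same route as the paper: invoke Lemma \ref{lem:werner} to get $\tilde{S}\in L^1(\Rdd)$ with $\int_{\Rdd}\tilde{S}(z)\,dz=\tr(S)\tr(\check{S})=1$, and then apply the preceding proposition and lemma (with $d$ replaced by $2d$). The only difference is that you spell out the justification of $\tr(\check{S})=\tr(S)$ and the dimension-doubling remark, which the paper leaves implicit.
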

\begin{proof}
	By lemma \ref{lem:werner}, $\tilde{S}=S\star \check{S}\in L^1(\Rdd)$ and $\int_{\Rdd} S\star \check{S}(z) \ dz=\tr(S)\tr(\check{S})=1$, hence the result follows from the previous lemma and proposition.
\end{proof}

\section{The eigenvalues of mixed-state localization operators} \label{sec:spectraltheory}
In this section we will be interested in the eigenvalues of mixed-state localization operators $\chi_{R\Omega}\star S$ as $R\to \infty$, where $R\Omega =\{R z:z\in \Omega\}$. In the case of localization operators, corresponding to $S=\varphi \otimes \varphi$ for $\varphi \in L^2(\Rd)$, the following behaviour of the eigenvalues $\{\lambda_k^{R\Omega}\}_{k\in \N}$ of $\chi_{R\Omega}\star (\varphi \otimes \varphi)$ has been established in \cite{Ramanathan:1994spec,Feichtinger:2001}:
\begin{equation} \label{eq:eigenvalues}
  \frac{\#\{k:\lambda_k^{R\Omega}>1-\delta\}}{R^{2d}|\Omega|}\to 1 \text{ as $R\to \infty$}.
\end{equation}
To show that this holds for the eigenvalues $\{\lambda_k^{R\Omega}\}_{k\in \N}$ of any mixed-state localization operator $\chi_{R\Omega}\star S$, we need a few lemmas. 

\begin{lem}
If $S$ is a density operator and $\Omega \subset \Rdd$ a compact domain, the eigenvalues of $\chi_\Omega\star S$ satisfy $0\leq \lambda_k^\Omega \leq 1.$	
\end{lem}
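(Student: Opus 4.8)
The plan is to pass from the eigenvalues to the quadratic form of $\chi_\Omega \star S$ and then to recognise the integrand as a Cohen class distribution. Since $\chi_\Omega \star S$ is a positive trace class operator with the spectral decomposition \eqref{eq:singvalgenloc}, each eigenvalue is realised as $\lambda_k^\Omega = \langle (\chi_\Omega \star S) h_k^\Omega, h_k^\Omega\rangle$ at its (normalised) eigenfunction. It therefore suffices to establish the two-sided bound $0 \leq \langle (\chi_\Omega \star S)\psi, \psi\rangle \leq 1$ for every unit vector $\psi \in L^2(\Rd)$.

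First I would unfold the weak definition of the convolution $\chi_\Omega \star S = \int_{\Rdd} \chi_\Omega(z)\,\alpha_z(S)\,dz$ to obtain
\begin{equation*}
\langle (\chi_\Omega \star S)\psi, \psi\rangle = \int_\Omega \langle \alpha_z(S)\psi, \psi\rangle\, dz.
\end{equation*}
The lower bound is then immediate: by Lemma \ref{lem:positiveandidentity} the operator $\chi_\Omega \star S$ is positive, so its quadratic form is nonnegative (equivalently, each $\alpha_z(S)=\pi(z)S\pi(z)^*$ is positive, hence the integrand is nonnegative).

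The crux is the upper bound, for which I would identify the integrand with the Cohen class distribution of $\psi$. Repeating the short computation in the proof of Lemma \ref{lem:inversion}, and using that the unimodular phase relating $\pi(z)^*$ and $\pi(-z)$ cancels in the inner product, gives
\begin{equation*}
\langle \alpha_z(S)\psi, \psi\rangle = \langle S\,\pi(-z)\psi,\, \pi(-z)\psi\rangle = \check{S} \star (\psi \otimes \psi)(z) = Q_S(\psi)(z),
\end{equation*}
where the last equality is the definition \eqref{eq:defofcohen}. Since $S$ is a density operator, Proposition \ref{prop:densitycohen} guarantees that $Q_S(\psi)$ is a nonnegative function with $\int_{\Rdd} Q_S(\psi)(z)\, dz = \|\psi\|^2_{L^2} = 1$. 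Restricting the integral to $\Omega$ can only decrease it, so
\begin{equation*}
\langle (\chi_\Omega \star S)\psi, \psi\rangle = \int_\Omega Q_S(\psi)(z)\, dz \leq \int_{\Rdd} Q_S(\psi)(z)\, dz = 1,
\end{equation*}
which completes the two-sided bound and hence the claim.

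I expect the main obstacle to be the middle identity $\langle \alpha_z(S)\psi, \psi\rangle = Q_S(\psi)(z)$: it requires matching the quadratic form of the shifted operator against the definition of $Q_S$ through the involution $S \mapsto \check{S}$ (using $\check{\check{S}}=S$) and keeping careful track of the phase distinguishing $\pi(z)^*$ from $\pi(-z)$. Once this identification is in place the argument reduces entirely to the already-established positivity and normalisation of $Q_S$ for density operators, so no further estimates are needed.
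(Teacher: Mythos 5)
Your proof is correct and follows essentially the same route as the paper: both arguments reduce to the identity $\inner{(\chi_\Omega \star S)\psi}{\psi}=\int_\Omega Q_S(\psi)(z)\,dz$, evaluate it at the eigenfunctions, and invoke Proposition \ref{prop:densitycohen} for the upper bound. The only difference is that you derive this identity from the weak definition of $\chi_\Omega\star S$ and the computation $\inner{\alpha_z(S)\psi}{\psi}=\check{S}\star(\psi\otimes\psi)(z)$ (which is valid, and matches the formula used in the proof of Lemma \ref{lem:inversion}), whereas the paper simply cites it from \cite{Luef:2018}.
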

\begin{proof}
	As we saw in section \ref{sec:notation}, $\chi_\Omega \star S$ is a positive operator, so its eigenvalues are non-negative. By equation (9) in \cite{Luef:2018}, $\inner{\chi_\Omega \star S \psi}{\psi}=\int_{\Omega} Q_S(\psi)(z)\ dz$ for $\psi \in L^2(\Rd).$ If we let $\psi$ be the eigenvector $h_k^\Omega$, proposition \ref{prop:densitycohen} now gives $$\lambda_k^\Omega = \int_{\Omega} Q_S(h_k^\Omega)(z)\ dz\leq \int_{\Rdd} Q_S(h_k^\Omega)(z)\ dz=\|h_k^\Omega\|_{L^2}^2=1.$$
	\end{proof}

\begin{lem} \label{lem:traces}
	Let $\Omega\subset \R^{2d}$ be a compact domain, and let $S\in \tco$. 
	\begin{align*}
  \tr(\chi_{\Omega}\star S)&= \sum_{k=1}^{\infty} \lambda_k^\Omega =|\Omega|\tr(S), \\
  \tr((\chi_{\Omega}\star S)^2)&=\int_{\Omega}\int_{\Omega} \tilde{S}(z-z^{\prime}) \ dz \ dz^{\prime}.
\end{align*}
\end{lem}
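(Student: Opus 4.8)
The plan is to compute both traces by exploiting the defining integral representation $\chi_\Omega \star S = \int_\Omega \alpha_z(S)\,dz$ and pulling the trace inside the integral, so that each identity reduces to computing the trace of the integrand. The one analytic point I would settle at the outset is that, since $z\mapsto \alpha_z(S)$ is continuous from the compact set $\Omega$ into $\tco$, the integral converges as a Bochner integral in $\tco$; consequently the (continuous) trace functional, as well as left and right multiplication by a fixed trace class operator, commutes with the integral. With that in hand, everything else is built from three ingredients: unitarity of $\pi(z)$, cyclicity of the trace, and the homomorphism property $\alpha_z\alpha_{z'}=\alpha_{z+z'}$.

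For the first identity, the middle equality $\tr(\chi_\Omega\star S)=\sum_{k}\lambda_k^\Omega$ is Lidskii's theorem \eqref{eq:lidskii} applied to the trace class operator $\chi_\Omega\star S$. For the outer equality I would write $\tr(\chi_\Omega\star S)=\int_\Omega \tr(\alpha_z(S))\,dz$ and then use that $\pi(z)$ is unitary together with cyclicity to get $\tr(\alpha_z(S))=\tr(\pi(z)S\pi(z)^*)=\tr(S)$ for every $z$; integrating the constant $\tr(S)$ over $\Omega$ yields $|\Omega|\tr(S)$.

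For the second identity I would square the integral representation, obtaining $(\chi_\Omega\star S)^2=\int_\Omega\int_\Omega \alpha_z(S)\alpha_{z'}(S)\,dz\,dz'$, and pull the trace inside to reduce to the pointwise computation of $\tr(\alpha_z(S)\alpha_{z'}(S))$. The key step is to recognize this integrand as a translate of $\tilde{S}$. Using $\tr(\alpha_w(A)\alpha_w(B))=\tr(AB)$ (again unitarity plus cyclicity) with $w=-z$, together with $\alpha_z\alpha_{z'}=\alpha_{z+z'}$, I get $\tr(\alpha_z(S)\alpha_{z'}(S))=\tr(S\,\alpha_{z'-z}(S))$. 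On the other hand, unwinding the definition \eqref{eq:defconv} gives $\tilde{S}(w)=S\star\check{S}(w)=\tr(S\,\alpha_w(\check{\check{S}}))=\tr(S\,\alpha_w(S))$, since $\check{\check{S}}=P^2SP^2=S$. Hence $\tr(\alpha_z(S)\alpha_{z'}(S))=\tilde{S}(z'-z)$, and substituting back yields $\tr((\chi_\Omega\star S)^2)=\int_\Omega\int_\Omega \tilde{S}(z'-z)\,dz\,dz'$. Relabelling the two integration variables (the domain $\Omega\times\Omega$ is symmetric), or equivalently observing that $\tilde{S}$ is even, turns this into the claimed expression with $\tilde{S}(z-z')$.

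The main obstacle I anticipate is purely the bookkeeping of the interchange of trace and integral, both for the single integral and, more delicately, for the double integral defining the square; this is exactly why I would isolate the Bochner-integrability of $z\mapsto\alpha_z(S)$ before starting. Once that is in place, the evaluations of $\tr(\alpha_z(S))$ and $\tr(\alpha_z(S)\alpha_{z'}(S))$ are short manipulations, and the identification of the second integrand with $\tilde{S}$ is the crux of the argument.
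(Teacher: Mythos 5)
Your proof is correct, but it takes a genuinely different route from the paper's. The paper never touches the integral $\int_\Omega \alpha_z(S)\,dz$ directly: for the first identity it evaluates $(\chi_\Omega\star S)\star I$ in two ways, using \eqref{eq:identity} on one side and the associativity $(\chi_\Omega\star S)\star I=\chi_\Omega\ast(S\star I)$ on the other; for the second it applies the identity $T\star\check{T}(0)=\tr(T^2)$ to $T=\chi_\Omega\star S$ and then unwinds $(\chi_\Omega\star S)\star(\check{\chi_\Omega}\star\check{S})(0)$ by associativity and commutativity into the double integral. You instead pull the trace through the (Bochner) integral and compute the integrands pointwise, with $\tr(\alpha_z(S))=\tr(S)$ and the identification $\tr(\alpha_z(S)\alpha_{z'}(S))=\tr(S\,\alpha_{z'-z}(S))=\tilde{S}(z'-z)$, which is exactly the content the paper extracts from $T\star\check{T}(0)=\tr(T^2)$ plus associativity, arrived at by hand. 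Your computation of $\tilde{S}(w)=\tr(S\,\alpha_w(S))$ via $\check{\check{S}}=S$ is right, and the final relabelling (or evenness of $\tilde{S}$, which the paper also invokes later in the proof of Lemma \ref{lem:opnormbound}) is legitimate. What the paper's route buys is that all the measure-theoretic bookkeeping is absorbed into the already-established convolution framework, so no interchange of trace and integral ever needs to be justified; what your route buys is self-containedness -- you do not need the (non-trivial) associativity of Werner's convolutions, only the continuity of $z\mapsto\alpha_z(S)$ in trace norm on the compact set $\Omega$, which you correctly isolate as the one analytic point to settle. Just make sure, when writing it up, to note that the paper's weak definition of $\chi_\Omega\star S$ coincides with the Bochner integral in this setting, and to spell out the step turning $\bigl(\int_\Omega\alpha_z(S)\,dz\bigr)\bigl(\int_\Omega\alpha_{z'}(S)\,dz'\bigr)$ into a double integral before taking the trace.
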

\begin{proof}
	The formula $\tr(\chi_{\Omega}\star S)= \sum_{k=1}^{\infty} \lambda_k^\Omega$ is Lidskii's theorem from \eqref{eq:lidskii}.  To prove $\tr(\chi_{\Omega}\star S) =|\Omega|\tr(S)$, we note that \eqref{eq:identity} says that $(\chi_\Omega \star S) \star I(z)=\tr(\chi_\Omega \star S)$ for any $z\in \Rdd.$ However, by associativity of convolutions and $S\star I(z)=\tr(S)$ we also have that 
	\begin{align*}
		(\chi_\Omega \star S) \star I(z)&= \chi_\Omega \ast (S \star I)(z) \\
		&= \int_{\Rdd} \chi_{\Omega} (z') (S \star I)(z-z') \ dz' = \tr(S) |\Omega|.
	\end{align*}
	
	 For the second part, note that $T\star \check{T}(0)=\tr(T^2)$ for any $T\in \tco$ by the definition of convolution of operators. In particular\footnote{The alert reader will note that we use $ (\chi_\Omega \star S) \parcheck=\check{\chi_\Omega}\star \check{S}$. See \cite{Skrettingland:2017} for the simple proof.} $(\chi_\Omega \star S)\star (\check{\chi_\Omega} \star \check{S})(0)=\tr((\chi_\Omega \star S)^2)$.  Hence, using associativity and commutativity of convolutions,
	 \begin{align*}
	 	\tr((\chi_\Omega \star S)^2)&=\chi_\Omega \ast (\check{\chi_\Omega}\ast (S\star  \check{S}))(0) \\
	 	&= \int_\Omega\check{\chi_\Omega}\ast   (S\star \check{S})(-z') \ dz' \\
	 	&= \int_\Omega \int_{\Rdd} \chi_\Omega(-z) (S\star \check{S})(-z'-z) \ dz \ dz' \\
	 	&= \int_\Omega \int_{\Omega} (S\star \check{S})(z-z') \ dz \ dz',
	 \end{align*}
	 where we substituted $z\mapsto -z$ in the last line.
	 \end{proof}
\begin{rem}
	For rank-one operators $S=\varphi\otimes \varphi$ for $\varphi \in L^2(\Rd)$ these formulas are well known and used to obtain the profile of the eigenvalues of localization operators, see for instance \cite{Ramanathan:1994spec,Feichtinger:2001,Abreu:2016}. The approach used to obtain the second formula in these papers uses the reproducing kernel Hilbert space associated with the short-time Fourier transform. Our approach does not rely on this property of the STFT, which allows us to prove the result for general trace class operators. 
\end{rem}
The following is a generalization of \cite[Lem 3.3]{Abreu:2016} to mixed-state localization operators. Our proof follows the proof from that paper, which is based on the approach in \cite{Feichtinger:2001}.
\begin{lem} \label{lem:eigenvalues}
	Let $S$ be a density operator, let $\Omega \subset \Rdd$ be a compact domain and fix $\delta \in (0,1)$. Then
	\begin{equation*}
  \left|\# \{ k\geq 1:\lambda_k^{\Omega}>1-\delta\}-|\Omega|  \right|\leq \max \left\{\frac{1}{\delta},\frac{1}{1-\delta}\right\} \left|\int_{\Omega} \int_{\Omega} \tilde{S}(z-z') dz dz^{\prime}-|\Omega|\right|
\end{equation*}
\end{lem}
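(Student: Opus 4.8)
The plan is to read off two moments of the eigenvalue sequence directly from Lemma \ref{lem:traces}: since $S$ is a density operator we have $\sum_{k=1}^\infty \lambda_k^\Omega = |\Omega|\tr(S) = |\Omega|$, while applying Lidskii's theorem to $(\chi_\Omega\star S)^2$ (whose eigenvalues are $(\lambda_k^\Omega)^2$) gives $\sum_{k=1}^\infty (\lambda_k^\Omega)^2 = \tr((\chi_\Omega\star S)^2) = \int_\Omega\int_\Omega \tilde{S}(z-z')\,dz\,dz'$. The single scalar that ties the estimate together is the mixed second moment
\[
\sum_{k=1}^\infty \lambda_k^\Omega(1-\lambda_k^\Omega) = |\Omega| - \int_\Omega\int_\Omega \tilde{S}(z-z')\,dz\,dz',
\]
which is a sum of non-negative terms because every $\lambda_k^\Omega$ lies in $[0,1]$ by the preceding lemma. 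In particular its value equals $\big|\int_\Omega\int_\Omega \tilde{S}(z-z')\,dz\,dz' - |\Omega|\big|$, precisely the quantity on the right-hand side of the statement (up to the constant factor).

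Writing $N=\#\{k\geq 1:\lambda_k^\Omega>1-\delta\}$, I would express the error as
\[
N-|\Omega| = \sum_{k=1}^\infty\big(\mathbf{1}_{\{\lambda_k^\Omega>1-\delta\}}-\lambda_k^\Omega\big)
\]
and split the index set according to whether $\lambda_k^\Omega>1-\delta$ or $\lambda_k^\Omega\leq 1-\delta$. On the first block each summand equals $1-\lambda_k^\Omega\geq 0$ and on the second it equals $-\lambda_k^\Omega\leq 0$, so the two blocks carry opposite signs and can be bounded separately. The two elementary per-eigenvalue estimates I would use are: if $\lambda_k^\Omega>1-\delta$ then $1-\lambda_k^\Omega=\frac{1}{\lambda_k^\Omega}\,\lambda_k^\Omega(1-\lambda_k^\Omega)\leq \frac{1}{1-\delta}\,\lambda_k^\Omega(1-\lambda_k^\Omega)$, using $\lambda_k^\Omega>1-\delta>0$ in the denominator; and if $\lambda_k^\Omega\leq 1-\delta$ then $\lambda_k^\Omega=\frac{1}{1-\lambda_k^\Omega}\,\lambda_k^\Omega(1-\lambda_k^\Omega)\leq \frac{1}{\delta}\,\lambda_k^\Omega(1-\lambda_k^\Omega)$, using $1-\lambda_k^\Omega\geq \delta$.

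Summing these over the respective blocks yields the two one-sided bounds $N-|\Omega|\leq \frac{1}{1-\delta}\sum_k\lambda_k^\Omega(1-\lambda_k^\Omega)$ and $|\Omega|-N\leq \frac{1}{\delta}\sum_k\lambda_k^\Omega(1-\lambda_k^\Omega)$, each obtained by discarding the favorably-signed block of opposite sign and then enlarging the remaining sum to run over all $k$ (all terms being non-negative). Taking the larger of the two constants and substituting the computed value of $\sum_k\lambda_k^\Omega(1-\lambda_k^\Omega)$ gives the claimed inequality. I do not anticipate a genuine obstacle: the only points requiring care are that $\lambda_k^\Omega>1-\delta>0$ makes the division legitimate (this is where $\delta<1$ enters) and that all three series converge, which holds because $\chi_\Omega\star S$ is trace class with summable eigenvalues. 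The substantive input is entirely imported from Lemma \ref{lem:traces}; the remainder is the standard two-sided comparison of the eigenvalue counting function against its first and second moments, following the scheme of \cite{Feichtinger:2001,Abreu:2016}.
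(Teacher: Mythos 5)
Your proposal is correct and follows essentially the same route as the paper: the paper packages your indicator-minus-eigenvalue sum as the trace of an operator $G(\chi_\Omega\star S)$ with $G(t)=-t$ on $[0,1-\delta]$ and $G(t)=1-t$ on $(1-\delta,1]$, and your two per-eigenvalue estimates are exactly its pointwise bound $|G(t)|\leq \max\{1/\delta,1/(1-\delta)\}(t-t^2)$, with the moments $\sum_k\lambda_k^\Omega$ and $\sum_k(\lambda_k^\Omega)^2$ supplied by Lemma \ref{lem:traces} in both arguments.
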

\begin{proof}
	 Following \cite{Abreu:2016} we define the function 
	\begin{equation*}
  G(t):= 
     \begin{cases}
       -t &\quad\text{if } 0\leq t\leq 1-\delta\\
       1-t &\quad\text{if } 1-\delta < t \leq 1.\\ 
     \end{cases}
\end{equation*}
  We may apply $G$ to the eigenvalues in the spectral representation \eqref{eq:singvalgenloc}  to obtain a new operator $G(\chi_\Omega \star S)$:
\begin{equation*}
  G(\chi_{\Omega}\star S)=\sum_{k=1}^{\infty} G(\lambda_k^\Omega) h_k^\Omega \otimes h_k^\Omega. 
\end{equation*}
Since $\chi_\Omega \star S$ is trace class, $\{\lambda_k^\Omega\}_{k=1}^\infty\in \ell^1$. As $\sum_{k=1}^\infty \lambda_k^\Omega=|\Omega|$, only finitely many $\lambda_k^\Omega$ can satisfy $\lambda_k^\Omega >1-\delta$, and it follows that $\{G(\lambda_k^\Omega)\}_{k=1}^\infty\in \ell^1$ because $|G(t)|= |t|$ for $t\in [0,1-\delta]$. Hence $G(\chi_{\Omega}\star S)$ is a trace class operator with trace
\begin{align*}
  \tr(G(\chi_{\Omega}\star S))&=\sum_{k=1}^{\infty} G(\lambda_k^\Omega) \\
  &= \# \{k:\lambda_k^\Omega > 1-\delta\}-\sum_{k=1}^{\infty} \lambda_k^\Omega \\
  &= \# \{ k\geq 1:\lambda_k^{\Omega}>1-\delta\}-|\Omega|.
\end{align*}
Therefore
\begin{align*}
  \left|\# \{ k\geq 1:\lambda_k^{\Omega}>1-\delta\}-|\Omega|  \right|&= \left| \tr(G(\chi_{\Omega}\star S)) \right| \\
  &\leq \tr( |G|(\chi_{\Omega}\star S))\\ 
  &\leq \max \left\{\frac{1}{\delta},\frac{1}{1-\delta}\right\}\tr \left(\chi_{\Omega}\star S - \left( \chi_{\Omega}\star S\right)^2\right),
  \end{align*}
  where we have used $|G(t)|\leq \max \{\frac{1}{\delta},\frac{1}{1-\delta}\}(t-t^2)$ for $t\in [0,1]$. The final result follows from inserting the expressions for $\tr(\chi_\Omega\star S)$ and $\tr((\chi_\Omega \star S)^2)$ from lemma \ref{lem:traces}.
\end{proof}

The following is the main result of this section, which shows that \eqref{eq:eigenvalues} is valid for mixed-state localization operators.
\begin{thm} \label{thm:locopeigenvalues}
	Let $S$ be a density operator, let $\Omega\subset \Rdd$ be a compact domain and fix $\delta\in (0,1)$. If $\{\lambda_k^{R\Omega}\}_{k\in \N}$ are the eigenvalues of $\chi_{R\Omega}\star S$, then
	\begin{equation*}
  \frac{\#\{k:\lambda_k^{R\Omega}>1-\delta\}}{R^{2d}|\Omega|}\to 1 \text{ as $R\to \infty$}.
\end{equation*}
\end{thm}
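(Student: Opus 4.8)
The plan is to combine the quantitative estimate of Lemma~\ref{lem:eigenvalues} with the approximate-identity limit of Corollary~\ref{cor:appid}, both of which have been arranged precisely for this statement. First I would apply Lemma~\ref{lem:eigenvalues} with the compact domain $R\Omega$ in place of $\Omega$, keeping $\delta\in(0,1)$ fixed throughout. This yields
\begin{equation*}
  \left|\#\{k\geq 1:\lambda_k^{R\Omega}>1-\delta\}-|R\Omega|\right|\leq \max\left\{\frac{1}{\delta},\frac{1}{1-\delta}\right\}\left|\int_{R\Omega}\int_{R\Omega}\tilde{S}(z-z')\,dz\,dz'-|R\Omega|\right|,
\end{equation*}
where the constant $\max\{1/\delta,1/(1-\delta)\}$ depends only on $\delta$ and is independent of $R$.

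Next I would use the elementary scaling identity $|R\Omega|=R^{2d}|\Omega|$, valid because dilation by $R$ in $\Rdd$ multiplies Lebesgue measure by $R^{2d}$. Dividing the whole inequality by $R^{2d}|\Omega|$ then gives
\begin{equation*}
  \left|\frac{\#\{k:\lambda_k^{R\Omega}>1-\delta\}}{R^{2d}|\Omega|}-1\right|\leq \frac{1}{|\Omega|}\max\left\{\frac{1}{\delta},\frac{1}{1-\delta}\right\}\left|\frac{1}{R^{2d}}\int_{R\Omega}\int_{R\Omega}\tilde{S}(z-z')\,dz\,dz'-|\Omega|\right|.
\end{equation*}

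Finally, Corollary~\ref{cor:appid} asserts precisely that $\frac{1}{R^{2d}}\int_{R\Omega}\int_{R\Omega}\tilde{S}(z-z')\,dz\,dz'\to|\Omega|$ as $R\to\infty$; this is where the hypothesis $\tr(S)=1$ enters, guaranteeing $\int_{\Rdd}\tilde{S}=1$ so that the dilates $\{\tilde{S}_R\}$ form an approximate identity. Hence the right-hand side above tends to $0$ and the claimed limit follows. Because the construction has been set up so carefully through the preceding lemmas, there is no genuine obstacle remaining at this stage; the only point needing a moment's attention is that the multiplicative constant coming from Lemma~\ref{lem:eigenvalues} is fixed by $\delta$ and does not grow with $R$, so it cannot interfere with the passage to the limit.
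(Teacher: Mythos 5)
Your argument is exactly the paper's proof: apply Lemma \ref{lem:eigenvalues} to the dilated domain $R\Omega$, divide by $R^{2d}|\Omega|$, and invoke Corollary \ref{cor:appid} to send the right-hand side to zero. The reasoning is correct and complete, including the observation that the constant $\max\{1/\delta,1/(1-\delta)\}$ is independent of $R$.
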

\begin{proof}
	By the previous lemma,
	{\footnotesize
\begin{equation*}
		\left|\# \{ k\geq 1:\lambda_k^{R\Omega}>1-\delta\}-R^{2d}|\Omega|  \right|\leq \max \left\{\frac{1}{\delta},\frac{1}{1-\delta}\right\} \left|\int_{R\Omega} \int_{R\Omega} \tilde{S}(z-z') dz dz^{\prime}-R^{2d}|\Omega|\right|.
\end{equation*}}
Hence if we divide by $R^{2d}|\Omega|$ {\footnotesize
\begin{equation*}
  \left|\frac{\# \{ k\geq 1:\lambda_k^{R\Omega}>1-\delta\}}{R^{2d}|\Omega|}-1  \right|\leq \max \left\{\frac{1}{\delta},\frac{1}{1-\delta}\right\} \frac{1}{|\Omega|} \left|\frac{1}{R^{2d}}\int_{R\Omega} \int_{R\Omega} \tilde{S}(z-z') dz dz^{\prime}-|\Omega|\right|.
\end{equation*}}
The result now follows from corollary \ref{cor:appid}.
\end{proof}

\section{Accumulated Cohen class distributions}
For any density operator $S$ and domain $\Omega \subset \Rdd$, we define an associated \textit{accumulated Cohen class distribution} by  \begin{equation*}
  \rho^S_{\Omega}(z):=\sum_{k=1}^{A_\Omega} Q_S(h_k^\Omega) \hskip 1em \text{ for } z\in \Rdd,
\end{equation*}
where $A_{\Omega}=\lceil |\Omega| \rceil$ and $h_k^\Omega$ are the eigenfunctions of $\chi_{\Omega}\star S$. Note that $\rho_\Omega^S$ may also be written as a convolution of operators, since 
\begin{equation*}
  \rho^S_{\Omega}=\sum_{k=1}^{A_\Omega} \check{S}\star (h_k^\Omega \otimes h_k^\Omega)=\check{S}\star \sum_{k=1}^{A_\Omega}  (h_k^\Omega \otimes h_k^\Omega).
\end{equation*}
As a consequence, lemma \ref{lem:inversion} gives that 
	$\rho_\Omega^S(z)\leq 1$  for any  $z\in \Rdd$,  since $\{h_k^\Omega\}_{k\in \N}$ is an orthonormal basis and $$\rho_\Omega^S(z)=\sum_{n=1}^{A_\Omega} \check{S}\star (h_k^\Omega\otimes h_k^\Omega)(z)\leq \sum_{n=1}^\infty \check{S}\star (h_k^\Omega \otimes h_k^\Omega)(z)=\tr(S)=1.$$

In \cite{Abreu:2016}, Abreu et al. prove results showing that when $Q_S$ is a spectrogram, $\rho_\Omega^S$ is an approximation of the characteristic function $\chi_\Omega$. We will show that their results hold when $S$ is \textit{any} density operator.  Our presentation and proofs follow those in \cite{Abreu:2016}. The proofs will typically consist of two parts: the easy part is to show that the function $\chi_\Omega \ast \tilde{S}$ approximates $\chi_\Omega$. The more intricate part is to show that $\chi_\Omega \ast \tilde{S}$ also approximates $\rho_\Omega^S$.  We start by generalizing \cite[Lem. 4.2, 4.3]{Abreu:2016}.
\begin{lem} \label{lem:abreu}
	Let $\Omega \subset \Rdd$ be a compact domain and define
	\begin{equation*}
  E(\Omega)=1-\frac{\sum_{k=1}^{A_{\Omega} }\lambda_k^\Omega}{|\Omega|}.
\end{equation*}
Then
\begin{equation*}
	\frac{1}{|\Omega|} \|\rho^S_{\Omega}-\chi_{\Omega}\ast \tilde{S}\|_{L^1}\leq \left( \frac{1}{|\Omega|}+2E(\Omega) \right),
\end{equation*}
and $$E(R \Omega)\to 0 \text{ as } R\to \infty.$$
\end{lem}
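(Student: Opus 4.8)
The statement has two parts, and the plan is to treat them separately. For the inequality, the starting point is Proposition \ref{prop:associativity}, which identifies $\chi_\Omega \ast \tilde{S} = \sum_{k=1}^\infty \lambda_k^\Omega Q_S(h_k^\Omega)$, while by definition $\rho^S_{\Omega} = \sum_{k=1}^{A_\Omega} Q_S(h_k^\Omega)$. Subtracting, the difference splits as
\[
\rho^S_{\Omega} - \chi_\Omega \ast \tilde{S} = \sum_{k=1}^{A_\Omega}(1-\lambda_k^\Omega)\, Q_S(h_k^\Omega) \;-\; \sum_{k=A_\Omega+1}^\infty \lambda_k^\Omega\, Q_S(h_k^\Omega).
\]
Since $0 \le \lambda_k^\Omega \le 1$, both sums are non-negative functions, so by the triangle inequality the $L^1$ norm of the difference is at most the sum of the two individual $L^1$ norms.

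The key simplification is that each $Q_S(h_k^\Omega)$ is a positive function with $\int_{\Rdd} Q_S(h_k^\Omega)(z)\,dz = \|h_k^\Omega\|_{L^2}^2 = 1$ by Proposition \ref{prop:densitycohen}. Hence the two $L^1$ norms are computed by integrating term by term (legitimate because all terms are non-negative), giving $\sum_{k=1}^{A_\Omega}(1-\lambda_k^\Omega)$ and $\sum_{k=A_\Omega+1}^\infty \lambda_k^\Omega$ respectively. Using $\sum_{k=1}^\infty \lambda_k^\Omega = |\Omega|$ from Lemma \ref{lem:traces} (with $\tr(S)=1$) to evaluate the tail, and $A_\Omega = \lceil|\Omega|\rceil \le |\Omega|+1$, the sum of the two bounds becomes $A_\Omega + |\Omega| - 2\sum_{k=1}^{A_\Omega}\lambda_k^\Omega \le 2|\Omega| + 1 - 2\sum_{k=1}^{A_\Omega}\lambda_k^\Omega$. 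Dividing by $|\Omega|$ and substituting the definition of $E(\Omega)$ yields exactly the claimed bound $\tfrac{1}{|\Omega|} + 2E(\Omega)$ after routine rearrangement.

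For the second claim, I would first observe that $E(\Omega) \ge 0$ always, since $\sum_{k=1}^{A_\Omega}\lambda_k^\Omega \le \sum_{k=1}^\infty \lambda_k^\Omega = |\Omega|$. The content is therefore the upper bound $\limsup_{R\to\infty} E(R\Omega) \le 0$, which I would derive from the eigenvalue asymptotics in Theorem \ref{thm:locopeigenvalues}. Fix $\delta \in (0,1)$ and write $m = R^{2d}|\Omega| = |R\Omega|$, $N = A_{R\Omega} = \lceil m \rceil$, and $M_\delta = \#\{k : \lambda_k^{R\Omega} > 1-\delta\}$. Because the eigenvalues are arranged in decreasing order, the first $\min(M_\delta, N)$ of them exceed $1-\delta$, so $\sum_{k=1}^N \lambda_k^{R\Omega} \ge (1-\delta)\min(M_\delta, N)$; splitting into the cases $M_\delta \ge N$ and $M_\delta < N$ and using $M_\delta/m \to 1$ from Theorem \ref{thm:locopeigenvalues} together with $N \ge m$, one finds $\limsup_R E(R\Omega) \le \delta$. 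Since $\delta \in (0,1)$ was arbitrary and $E(R\Omega) \ge 0$, it follows that $E(R\Omega) \to 0$.

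I expect the second claim to be the main obstacle. The inequality is essentially bookkeeping once the non-negative splitting is in place; the delicate point is the passage from the \emph{counting} asymptotic of Theorem \ref{thm:locopeigenvalues} to the behaviour of the \emph{partial sum} $\sum_{k=1}^{A_{R\Omega}}\lambda_k^{R\Omega}$ of ordered eigenvalues, which requires care in comparing the cut-off index $N = \lceil m\rceil$ against the threshold count $M_\delta$ and in handling the arbitrary $\delta$.
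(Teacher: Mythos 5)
Your proof is correct and follows essentially the same route as the paper's: both arguments reduce the $L^1$ estimate to the quantity $\sum_{k=1}^{A_\Omega}(1-\lambda_k^\Omega)+\sum_{k>A_\Omega}\lambda_k^\Omega$ and then perform the same bookkeeping with $A_\Omega\le|\Omega|+1$, and both prove $E(R\Omega)\to 0$ by comparing $A_{R\Omega}$ with $\#\{k:\lambda_k^{R\Omega}>1-\delta\}$ (your $\min(M_\delta,N)$ is exactly the paper's $l_\delta(R\Omega)$) before invoking Theorem \ref{thm:locopeigenvalues} and letting $\delta\to 0$. The only cosmetic difference is in the first step: the paper bounds $\|(\sum_{k\le A_\Omega}h_k^\Omega\otimes h_k^\Omega-\chi_\Omega\star S)\star\check S\|_{L^1}$ via the trace-norm inequality of Lemma \ref{lem:werner}, whereas you expand at the function level and use the positivity and unit $L^1$-norm of each $Q_S(h_k^\Omega)$ --- both routes land on the identical intermediate quantity.
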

\begin{proof}
	 Using lemma \ref{lem:werner} and the associativity of convolutions, we find that
	\begin{align*}
  \|\rho^S_{\Omega}-\chi_{\Omega}\ast (S\star \check{S})\|_{L^1}&=\left\|\left(\sum_{k=1}^{A_\Omega}  h_k^\Omega \otimes h_k^\Omega \right)\star \check{S}-(\chi_{\Omega}\star S)\star \check{S}\right\|_{L^1}  \\
  &\leq \left\|\sum_{k=1}^{A_\Omega}  h_k^\Omega \otimes h_k^\Omega-\chi_{\Omega}\star S\right\|_{\tco}\left\|\check{S}\right\|_{\tco} \\
  &=\left\|\sum_{k=1}^{A_{\Omega}}h_k^{\Omega}\otimes h_k^{\Omega}-\sum_{k=1}^{\infty}\lambda_k^{\Omega} h_k^{\Omega}\otimes h_k^{\Omega}\right\|_{\tco} \\
  &= \sum_{k=1}^{A_{\Omega}} (1-\lambda_k^{\Omega})+\sum_{k=A_{\Omega}+1}^{\infty}\lambda_k^{\Omega}.
\end{align*}
We have expanded $\chi_{\Omega}\star S$ using the spectral representation \eqref{eq:singvalgenloc}, and the last equality uses that $\|T\|_\tco$ is the sum of the eigenvalues for positive operators $T\in \tco$. Since $\sum_{k=1}^{\infty}\lambda_k^{\Omega}=|\Omega|$, we further get that
\begin{align*}
  \sum_{k=1}^{A_{\Omega}} (1-\lambda_k^{\Omega})+\sum_{k=A_{\Omega}+1}^{\infty}\lambda_k^{\Omega}&=|\Omega|+A_{\Omega}-2\sum_{k=1}^{A_{\Omega}}\lambda_k^{\Omega} \\
  &= \left(A_{\Omega}-|\Omega|\right)+2\left(|\Omega|-\sum_{k=1}^{A_{\Omega}}\lambda_k^{\Omega}\right)\\
  &\leq 1+2E(\Omega)|\Omega|.
\end{align*}
To prove that $E(R \Omega)\to 0$ as $R\to \infty$, we will pick $\delta \in (0,1)$ and find an upper bound of $E(R \Omega)$ in terms of $\frac{\# \{k: \lambda_k^{R\Omega}>1-\delta\}}{|\Omega|}$ -- an application of theorem \ref{thm:locopeigenvalues} will then give the desired result. For a fixed $\delta \in (0,1)$ and domain $\Omega$, we define $$l_{\delta}(\Omega)=\min\{A_{\Omega},\# \{k:\lambda_k^{\Omega}>1-\delta\}\}.$$
By definition $l_{\delta}(\Omega)\leq A_{\Omega}$,  and since the eigenvalues $\lambda_k^\Omega$ are arranged in decreasing order we see that $\lambda_k^\Omega > 1-\delta$ for $k\leq l_\delta(\Omega).$ Using this we estimate that
\begin{align*}
  E(\Omega)&=1-\frac{\sum_{k=1}^{A_{\Omega}}\lambda_{k}^\Omega}{|\Omega|} \\
  &\leq 1-\frac{\sum_{k=1}^{l_{\delta}(\Omega)}\lambda_{k}^\Omega}{|\Omega|} \\
  &\leq 1-(1-\delta)\frac{l_\delta(\Omega)}{|\Omega|},
\end{align*}
where we have also used that the eigenvalues $\lambda_{k}^\Omega$ are non-negative. Note that we always have $E(\Omega)\geq 0$, since $\sum_{k=1}^\infty \lambda_k^\Omega=|\Omega|$.
If we replace the domain $\Omega$ by the new domain $R \Omega$ in the previous estimate and insert the definition of $l_{\delta}(R \Omega)$, we obtain 
\begin{equation*}
  0\leq E(R \Omega)\leq 1-(1-\delta)\min\left\{\frac{A_{R\Omega}}{R^{2d}|\Omega|}, \frac{\# \{k:\lambda_k^{R \Omega}>1-\delta\}}{R^{2d}|\Omega|}\right\}.
\end{equation*}
By definition of $A_{\Omega}$ we know that $\frac{A_{R \Omega}}{|\Omega|R^{2d}}\geq 1$, hence we get the estimate
\begin{equation} \label{eq:Ebound}
  0\leq E(R \Omega)\leq 1-(1-\delta)\min\left\{1, \frac{\# \{k:\lambda_k^{R \Omega}>1-\delta\}}{R^{2d}|\Omega|}\right\}.
\end{equation}

The behaviour of the term $\frac{\# \{k:\lambda_k^{R \Omega}>1-\delta\}}{R^{2d}|\Omega|}$ is described by theorem \ref{thm:locopeigenvalues}, which says that this fraction approaches 1 as $R\to \infty$. Therefore $$0\leq \limsup_{R\to \infty} E(R \Omega)\leq 1-(1-\delta)=\delta,$$ and by picking $\delta$ arbitrarily close to $0$ we see that in fact $E(R \Omega)\to 0$ as $R\to \infty$.

\end{proof}

\subsection{Asymptotic convergence of accumulated Cohen class distributions}
We are now ready to prove the generalization of \cite[Thm. 4.3]{Abreu:2016} -- the asymptotic convergence of accumulated Cohen's class distributions to the characteristic function of the domain.
\asymptotic*
\begin{proof}
We will use the estimate 
	\begin{align*}
  \|\rho^S_{R \Omega}(R\hskip 0.1em\boldsymbol{\cdot})-\chi_{\Omega}\|_{L^1}\leq \|\rho^S_{R \Omega}(R\hskip 0.1em\boldsymbol{\cdot})-\chi_{\Omega}\ast \tilde{S}_R\|_{L^1} + \|\chi_{\Omega}\ast \tilde{S}_R-\chi_{\Omega}\|_{L^1},
\end{align*}
where $\tilde{S}_R(z)=R^{2d}\tilde{S}(Rz)$. The second term converges to 0 as $R\to \infty$ by corollary \ref{cor:appid}. To bound the first term, we note that a straightforward calculation using a change of variable gives that $\chi_\Omega \ast\tilde{S}_R(z)=\chi_{\Omega}\ast (S\star \check{S})_R(z)=\chi_{R \Omega} \ast (S\star \check{S})(Rz)$. Hence we find, with $z^\prime=Rz$, that 
\begin{align*}
  \|\rho^S_{R \Omega}(R\hskip 0.1em\boldsymbol{\cdot})-\chi_{\Omega}\ast \tilde{S}_R\|_{L^1}&=\int_{\Rdd} |\rho^S_{R \Omega}(Rz)-\chi_{R \Omega} \ast (S\star \check{S})(Rz)| \ dz  \\
  &=\frac{1}{R^{2d}} \int_{\Rdd}|\rho^S_{R \Omega}(z^\prime)-\chi_{R \Omega} \ast (S\star \check{S})(z^\prime)| \ dz^\prime \\
  &\leq \frac{1}{R^{2d}}+2 E(R\Omega)|\Omega|,
\end{align*}
where the last inequality is lemma \ref{lem:abreu}. By the same lemma, this expression converges to 0 as $R\to \infty.$
\end{proof}

	The above result shows  that the domain $\Omega$ is uniquely determined by $\rho^S_{R \Omega}$ as $R\to \infty,$ i.e. from knowledge of $S$ and the first $A_{R\Omega}=\lceil |R\Omega| \rceil$ eigenfunctions of $\chi_{R\Omega}\star S$ for infinitely many $R$. In \cite{Luef:2018} we used a Tauberian theorem for operators due to Werner\cite{Werner:1984} to establish certain conditions on $S$, formulated in terms of a Fourier transform for operators, that guarantee that $\Omega$ can be recovered from only $\chi_\Omega \star S$. The next two sections will show that we may \textit{estimate} $\Omega$ from $\chi_\Omega \star S$, but make no claim that $\Omega$ is determined by $\chi_\Omega \star S$ for any density operator $S$.

\subsection{Non-asymptotic approximation by accumulated Cohen class distributions}
The bounds for the non-asymptotic convergence of accumulated Cohen class distributions will depend on the size of the perimeter of the domain $\Omega \subset \Rdd$. To quantify the size of the perimeter, we will use the variation of its characteristic function $\chi_\Omega$. Hence we define $$|\partial \Omega|=Var(\chi_\Omega)$$ for a domain $\Omega \subset \Rdd.$ We say that $\Omega$ has finite perimeter if $\chi_\Omega$ has bounded variation. The only way this will enter our considerations is via the following lemma, which is proved in \cite[Lem. 3.2]{Abreu:2016} where the reader may also find some more relevant discussion and references regarding functions of bounded variation.
\begin{lem}  \label{lem:boundedvariation}
	Let $f\in L^1(\Rd)$ have bounded variation, and let $\varphi \in L^1(\Rd)$ satisfy $\int_{\Rd}\varphi(z) \ dz = 1.$ Then $$\|f\ast \varphi-f\|_1\leq Var(f)\int_{\Rd}|x| |\varphi(x)| \ dx,$$
	where $|x|$ denotes the Euclidean norm on $\Rd$.
\end{lem}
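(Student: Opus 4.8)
The plan is to reduce the estimate to the single fundamental fact that translating a bounded-variation function moves it in $L^1$ by at most the distance times its variation, namely
\begin{equation*}
  \|T_x f - f\|_{1} \leq |x| \, Var(f) \hskip 1em \text{for all } x\in \Rd,
\end{equation*}
where $T_x$ is the translation operator from section \ref{sec:prelim}, so that $T_x f(y)=f(y-x)$. Once this is available, the lemma follows by a routine averaging argument that exploits the normalization $\int_{\Rd}\varphi = 1$.

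Concretely, I would first use $\int_{\Rd}\varphi(x)\,dx=1$ to write the difference $f\ast\varphi - f$ as an average of translates,
\begin{equation*}
  f\ast\varphi(y) - f(y) = \int_{\Rd}\varphi(x)\bigl(f(y-x)-f(y)\bigr)\,dx = \int_{\Rd}\varphi(x)\bigl(T_x f - f\bigr)(y)\,dx,
\end{equation*}
then take the $L^1$-norm in $y$ and apply Minkowski's integral inequality (equivalently Tonelli together with the triangle inequality) to obtain
\begin{equation*}
  \|f\ast\varphi - f\|_{1} \leq \int_{\Rd}|\varphi(x)|\,\|T_x f - f\|_{1}\,dx.
\end{equation*}
Inserting the translation estimate then yields $\|f\ast\varphi - f\|_1 \leq Var(f)\int_{\Rd}|x|\,|\varphi(x)|\,dx$, which is exactly the claim.

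The main obstacle is therefore establishing the translation estimate, which is where the bounded-variation hypothesis genuinely enters. For $f\in C_c^1(\Rd)$ it is immediate from the fundamental theorem of calculus along the segment joining $y$ to $y-x$: writing $f(y-x)-f(y) = -\int_0^1 x\cdot\nabla f(y-tx)\,dt$, integrating in $y$, and interchanging the order of integration gives $\|T_x f - f\|_1 \leq |x|\int_{\Rd}|\nabla f| = |x|\,Var(f)$. For a general $f$ of bounded variation one passes to the limit by mollification: if $\{\psi_\epsilon\}$ is a nonnegative approximate identity, then $f\ast\psi_\epsilon$ is smooth, $f\ast\psi_\epsilon\to f$ in $L^1$, and $Var(f\ast\psi_\epsilon)\leq Var(f)$. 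Applying the smooth estimate to $f\ast\psi_\epsilon$ and letting $\epsilon\to 0$ — using that translation is an $L^1$-isometry, so $T_x(f\ast\psi_\epsilon)\to T_x f$ in $L^1$ and hence $\|T_x(f\ast\psi_\epsilon)-f\ast\psi_\epsilon\|_1\to\|T_x f - f\|_1$ — recovers the estimate for $f$. I expect the only subtlety is justifying $Var(f\ast\psi_\epsilon)\leq Var(f)$, which follows from the distributional characterization of the variation as a total-variation norm of the gradient measure together with Fubini, since $D(f\ast\psi_\epsilon)=(Df)\ast\psi_\epsilon$ and convolution with a probability density does not increase the mass of a measure.
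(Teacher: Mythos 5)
Your proof is correct: the paper itself does not prove this lemma but simply defers to \cite[Lem. 3.2]{Abreu:2016}, and your argument --- writing $f\ast\varphi - f$ as an average of translates using $\int_{\Rd}\varphi = 1$, applying Minkowski's integral inequality, and reducing everything to the translation bound $\|T_x f - f\|_1 \leq |x|\, Var(f)$ established via mollification --- is exactly the standard proof given in that reference. One cosmetic remark: the fundamental-theorem-of-calculus step never actually uses compact support, only smoothness together with $\nabla g \in L^1(\Rd)$, so it applies directly to the mollifications $f\ast\psi_\epsilon$ (which are smooth with integrable gradient but need not be compactly supported), and your limiting argument then goes through verbatim.
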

We also define a subset $M_{op}^*$ of density operators by 
\begin{equation*} 
  M_{op}^*=\{S\in \tco : S\geq 0, \tr(S)=1  \text{ and } \int_{\Rdd} \tilde{S}(z) |z| \ dz < \infty  \},
\end{equation*}
where $|z|$ is the Euclidean norm of $z$, with the associated norm $$\|S\|^2_{M_{op}^*}=\int_{\Rdd} \tilde{S}(z) |z| \ dz.$$
This norm lets us bound the approximation of $\chi_\Omega$ by $\chi_\Omega \ast \tilde{S}$, since lemma \ref{lem:boundedvariation} gives
\begin{equation} \label{eq:l1boundeasy}
	\|\chi_\Omega -\chi_\Omega \ast \tilde{S}\|_{L^1} \leq |\partial \Omega| \|S\|_{M^*_{op}}^2.
\end{equation}
 When $Q_S$ is a spectrogram, i.e. $S=\varphi \otimes \varphi$ for some $\varphi \in L^2(\Rdd)$ by \eqref{eq:spectrogram}, the norm $\|S\|_{M_{op}^*}^2$ becomes $\int_{\Rdd}|V_\varphi \varphi(z)|^2 |z| \ dz$, which is the norm $\|\varphi\|_{M^*}$ introduced in \cite{Abreu:2016} for accumulated spectrograms.
We now prove the generalization of \cite[Prop. 3.4]{Abreu:2016}.
\begin{lem} \label{lem:opnormbound}
Let $\Omega \subset \Rdd$ be a compact domain with finite perimeter and $S\in M^*_{op}(\Rd)$. If $\delta \in (0,1)$, then
\begin{equation*}
\left| \#\{k:\lambda_k^{\Omega}>1-\delta\}-|\Omega| \right|\leq \max \left\{\frac{1}{\delta},\frac{1}{1-\delta}\right\}\|S\|_{M^*}^2|\partial \Omega|
\end{equation*}
\end{lem}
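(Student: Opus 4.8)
The plan is to derive this lemma as an immediate consequence of Lemma \ref{lem:eigenvalues} together with the bounded-variation estimate \eqref{eq:l1boundeasy}. Lemma \ref{lem:eigenvalues} already supplies the inequality
\[
\left|\#\{k:\lambda_k^\Omega>1-\delta\}-|\Omega|\right|\leq \max\left\{\tfrac1\delta,\tfrac1{1-\delta}\right\}\left|\int_\Omega\int_\Omega \tilde S(z-z')\,dz\,dz'-|\Omega|\right|,
\]
so the entire task reduces to controlling $\left|\int_\Omega\int_\Omega \tilde S(z-z')\,dz\,dz'-|\Omega|\right|$ by $\|S\|^2_{M^*_{op}}|\partial\Omega|$.

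First I would rewrite the double integral as a convolution restricted to $\Omega$. By the definition of $\ast$, for $z\in\Rdd$ one has $\chi_\Omega\ast\tilde S(z)=\int_\Omega \tilde S(z-z')\,dz'$, and integrating over $\Omega$ gives $\int_\Omega \chi_\Omega\ast\tilde S(z)\,dz=\int_\Omega\int_\Omega \tilde S(z-z')\,dz'\,dz$; this is exactly the expression for $\tr((\chi_\Omega\star S)^2)$ recorded in Lemma \ref{lem:traces}. Since $|\Omega|=\int_\Omega \chi_\Omega(z)\,dz$, the difference becomes $\int_\Omega\bigl(\chi_\Omega\ast\tilde S-\chi_\Omega\bigr)(z)\,dz$.

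Then I would estimate this by passing to the absolute value inside the integral and enlarging the domain of integration from $\Omega$ to all of $\Rdd$, which is legitimate because the integrand $\bigl|\chi_\Omega\ast\tilde S-\chi_\Omega\bigr|$ is nonnegative:
\[
\left|\int_\Omega\int_\Omega \tilde S(z-z')\,dz\,dz'-|\Omega|\right|\leq \int_\Omega\bigl|\chi_\Omega\ast\tilde S-\chi_\Omega\bigr|\,dz\leq \|\chi_\Omega\ast\tilde S-\chi_\Omega\|_{L^1}.
\]
Applying \eqref{eq:l1boundeasy}, which is itself Lemma \ref{lem:boundedvariation} applied to $f=\chi_\Omega$ and $\varphi=\tilde S$ (valid since $\tilde S$ has total integral $1$ as noted in the notation of section \ref{sec:notation}), yields $\|\chi_\Omega\ast\tilde S-\chi_\Omega\|_{L^1}\leq |\partial\Omega|\,\|S\|^2_{M^*_{op}}$. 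Substituting this into the inequality from Lemma \ref{lem:eigenvalues} gives the claim. There is no serious obstacle here: the only points worth checking are the identification of the double integral with $\int_\Omega \chi_\Omega\ast\tilde S$ and the fact that $\tilde S$ integrates to $1$ so that \eqref{eq:l1boundeasy} applies, the entire substance of the estimate being already contained in the two invoked results.
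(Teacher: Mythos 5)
Your proposal is correct and follows essentially the same route as the paper: reduce to Lemma \ref{lem:eigenvalues}, rewrite the double integral as $\int_\Omega(\chi_\Omega\ast\tilde S-\chi_\Omega)$, bound it by $\|\chi_\Omega\ast\tilde S-\chi_\Omega\|_{L^1}$, and apply \eqref{eq:l1boundeasy}. The only cosmetic difference is that the paper orients the convolution so that it must invoke the evenness of $\tilde S$ (i.e.\ $\tilde S(z-z')=\tilde S(z'-z)$), whereas your ordering of the variables sidesteps that remark.
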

\begin{proof}
	By lemma \ref{lem:eigenvalues}, it suffices to bound the expression
	\begin{equation*}
  \left|\int_{\Omega} \int_{\Omega} \tilde{S}(z-z') dz dz^{\prime}-|\Omega|\right|.
\end{equation*}
We may rewrite this expression as
\begin{align*}
 \left|\int_{\Omega} \int_{\Rdd}\chi_{\Omega}(z) \tilde{S}(z-z') dz dz^{\prime}-|\Omega|\right|&=\left|\int_{\Omega} \chi_{\Omega}\ast \tilde{S}(z^{\prime})  dz^{\prime}-\int_{\Omega}\chi_{\Omega}(z^{\prime})\ dz^{\prime}\right| \\
 &= \left|\int_{\Omega} \chi_{\Omega}\ast \tilde{S}(z^{\prime})  -\chi_{\Omega}(z^{\prime})\ dz^{\prime}\right| \\
 &\leq\int_{\Rdd} \left| \chi_{\Omega}\ast \tilde{S}(z^{\prime})  -\chi_{\Omega}(z^{\prime})\right|  \ dz^{\prime}\\
 &= \|\chi_{\Omega}\ast \tilde{S}-\chi_{\Omega}\|_{L^1},
\end{align*}
where we have used $\tilde{S}(z-z')=\tilde{S}(z'-z)$ to write the left summand as a convolution with $\chi_{\Omega}$. This relation holds since $S\star \check{S}(-z) = \check{S} \star \check{\check{S}}(z)= \check{S} \star S(z)=S\star \check{S}(z)$, see \cite[Lem. 4.7]{Skrettingland:2017}. The result now follows from lemma \ref{lem:eigenvalues} and \eqref{eq:l1boundeasy}.
\end{proof}

The following $L^1$-bound generalizes \cite[Thm. 1.4]{Abreu:2016} to general $S\in M^*_{op}$.
\begin{thm}\label{thm:lonebound}
	If $S\in M_{op}^*$ and $\Omega \subset \Rdd$ is a compact domain with finite perimeter, then $$\frac{1}{|\Omega|}\|\rho_\Omega^S-\chi_\Omega \ast \tilde{S}\|_{L^1}\leq \left(\frac{1}{|\Omega|}+4\|S\|_{M^*_{op}} \sqrt{\frac{|\partial \Omega|}{|\Omega|}} \right).$$
\end{thm}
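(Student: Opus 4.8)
The plan is to reduce everything to a single non-asymptotic bound on the quantity $E(\Omega)$ from lemma \ref{lem:abreu}. That lemma already gives $\frac{1}{|\Omega|}\|\rho^S_\Omega-\chi_\Omega\ast\tilde{S}\|_{L^1}\leq \frac{1}{|\Omega|}+2E(\Omega)$, so the theorem will follow once I show
\begin{equation*}
E(\Omega)\leq 2\|S\|_{M^*_{op}}\sqrt{\frac{|\partial\Omega|}{|\Omega|}}.
\end{equation*}
In the asymptotic setting, $E(R\Omega)\to 0$ was extracted qualitatively from theorem \ref{thm:locopeigenvalues}; here I instead feed in the \emph{quantitative} eigenvalue-counting estimate of lemma \ref{lem:opnormbound}.

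First I would recall the intermediate inequality established inside the proof of lemma \ref{lem:abreu}: for any $\delta\in(0,1)$,
\begin{equation*}
E(\Omega)\leq 1-(1-\delta)\frac{l_\delta(\Omega)}{|\Omega|},\qquad l_\delta(\Omega)=\min\{A_\Omega,\#\{k:\lambda_k^\Omega>1-\delta\}\},
\end{equation*}
which follows from the decreasing ordering of the eigenvalues, since $\lambda_k^\Omega>1-\delta$ for all $k\leq l_\delta(\Omega)\leq A_\Omega$ gives $\sum_{k=1}^{A_\Omega}\lambda_k^\Omega\geq(1-\delta)l_\delta(\Omega)$. Restricting to $\delta\in(0,\tfrac12]$, so that $\max\{\tfrac1\delta,\tfrac1{1-\delta}\}=\tfrac1\delta$, lemma \ref{lem:opnormbound} yields $\#\{k:\lambda_k^\Omega>1-\delta\}\geq|\Omega|-\tfrac1\delta\|S\|_{M^*_{op}}^2|\partial\Omega|$. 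Since $A_\Omega\geq|\Omega|$ also dominates this quantity, I get $l_\delta(\Omega)\geq|\Omega|-\tfrac1\delta\|S\|_{M^*_{op}}^2|\partial\Omega|$. Substituting and expanding, then discarding the favourable factor $(1-\delta)\leq 1$, produces
\begin{equation*}
E(\Omega)\leq \delta+\frac{\|S\|_{M^*_{op}}^2|\partial\Omega|}{\delta\,|\Omega|}.
\end{equation*}

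The final step is the optimization over $\delta$. Writing $V=\|S\|_{M^*_{op}}^2|\partial\Omega|$, the right-hand side is minimized at $\delta=\sqrt{V/|\Omega|}$, with value $2\sqrt{V/|\Omega|}=2\|S\|_{M^*_{op}}\sqrt{|\partial\Omega|/|\Omega|}$, exactly the claimed bound. The only point requiring care, and the main (though mild) obstacle, is that this optimal $\delta$ must lie in $(0,\tfrac12]$ for the estimate above to be valid. I would dispose of this by a dichotomy: if $\sqrt{V/|\Omega|}\leq\tfrac12$ the optimal choice is admissible and gives the bound directly; if instead $\sqrt{V/|\Omega|}>\tfrac12$, then $2\|S\|_{M^*_{op}}\sqrt{|\partial\Omega|/|\Omega|}>1\geq E(\Omega)$, where $E(\Omega)\leq 1$ because the $\lambda_k^\Omega$ are nonnegative with $\sum_k\lambda_k^\Omega=|\Omega|$, so the bound holds trivially. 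Combining the two cases and inserting the resulting estimate on $E(\Omega)$ into lemma \ref{lem:abreu} completes the proof.
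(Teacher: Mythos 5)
Your proposal is correct and follows essentially the same route as the paper: reduce to lemma \ref{lem:abreu}, bound $E(\Omega)$ via the intermediate estimate \eqref{eq:Ebound} combined with lemma \ref{lem:opnormbound}, and choose $\delta=\|S\|_{M_{op}^*}\sqrt{|\partial\Omega|/|\Omega|}$. The only difference is that you spell out the optimization and the dichotomy on the admissibility of $\delta$ (using $E(\Omega)\leq 1$), details the paper delegates to the reference \cite[Lem. 4.3]{Abreu:2016}.
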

\begin{proof}
	From lemma \ref{lem:abreu},
 $$	\frac{1}{|\Omega|} \|\rho^S_{\Omega}-\chi_{\Omega}\ast \tilde{S}\|_{L^1}\leq \left( \frac{1}{|\Omega|}+2E(\Omega) \right).$$
 We will prove the theorem by proving the estimate $E(\Omega)\leq  2 \|S\|_{M_{op}^*} \sqrt{\frac{|\partial \Omega|}{|\Omega|}}$, which generalizes \cite[Lem. 4.3]{Abreu:2016}. We therefore jump back to our estimate in \eqref{eq:Ebound}, which was the estimate for $E(R\Omega)$ we obtained when we did not assume $S\in M_{op}^*$. For $R=1$ this equation gives
 \begin{equation} \label{eq:proofofasymptotic}
  0\leq E(\Omega)\leq 1-(1-\delta) \frac{\# \{k:\lambda_k^{ \Omega}>1-\delta\}}{|\Omega|}.
\end{equation}
To bound this expression, we note that lemma \ref{lem:opnormbound} gives 
\begin{equation*}
	\frac{\# \{k:\lambda_k^{ \Omega}>1-\delta\}}{|\Omega|} \geq1- \max \left\{\frac{1}{\delta},\frac{1}{1-\delta}\right\}\|S\|_{M^*}^2\frac{|\partial \Omega|}{|\Omega|}.
\end{equation*}
Inserting this estimate into \eqref{eq:proofofasymptotic} and setting $\delta = \|S\|_{M_{op}^*}\sqrt{\frac{|\partial \Omega|}{|\Omega|}}$ now gives the desired estimate -- we refer to the proof of \cite[Lem 4.3]{Abreu:2016} for the details.
\end{proof}
As a corollary, one can derive an estimate for $\|\rho_\Omega^S-\chi_\Omega \|_{L^1}$. We return to this question in section \ref{sec:sharp}.
\subsection{Weak $L^2$-convergence of accumulated Cohen class distributions}
Finally, we show that the weak-$L^2$ bounds for $\rho_\Omega^S-\chi_\Omega$ in \cite[Thm 1.5]{Abreu:2016} hold in the more general case where $S$ is a density operator. Following the proof in \cite{Abreu:2016} we start by proving a technical lemma. 

\begin{lem}
	If $S\in M_{op}^*$ and $\Omega \subset \Rdd$ is a compact domain with finite perimeter such that $\|S\|^2_{M_{op}^*}|\partial \Omega|\geq 1$, then for any $\delta >0$ $$\left| \left\{ z\in \Rdd : \left| \rho_\Omega^S(z)-\chi_\Omega\ast \tilde{S}(z)  \right|> \delta \right\} \right|\lesssim \frac{1}{\delta^2}\|S\|^2_{M_{op}^*}|\partial \Omega|.$$
\end{lem}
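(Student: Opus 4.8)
The plan is to control the measure of the set where $\rho_\Omega^S$ deviates from $\chi_\Omega \ast \tilde{S}$ by a Chebyshev-type argument, bounding the relevant second moment via the two $L^1$ estimates already in hand. The key observation is that $\rho_\Omega^S$ and $\chi_\Omega \ast \tilde{S}$ are both bounded by $1$ pointwise: we established $\rho_\Omega^S(z) \leq 1$ right after the definition of the accumulated Cohen class distribution, and $\chi_\Omega \ast \tilde{S}(z) \leq \|\chi_\Omega\|_\infty \int_{\Rdd}\tilde{S} = 1$ since $\tilde{S}$ is a positive function integrating to $1$. Consequently their difference is bounded in absolute value by $1$, so that
\begin{equation*}
	\left|\rho_\Omega^S(z) - \chi_\Omega \ast \tilde{S}(z)\right|^2 \leq \left|\rho_\Omega^S(z) - \chi_\Omega \ast \tilde{S}(z)\right|,
\end{equation*}
which lets me pass from an $L^2$-type quantity to the $L^1$ norm controlled by Theorem \ref{thm:lonebound}.

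\textbf{Key steps.} First I would apply Chebyshev's inequality: for the level set in the statement,
\begin{equation*}
	\left|\left\{z : \left|\rho_\Omega^S(z) - \chi_\Omega \ast \tilde{S}(z)\right| > \delta\right\}\right| \leq \frac{1}{\delta^2}\int_{\Rdd}\left|\rho_\Omega^S(z) - \chi_\Omega \ast \tilde{S}(z)\right|^2 \ dz.
\end{equation*}
Next I would invoke the pointwise bound above to replace the squared integrand by the absolute value, giving $\int_{\Rdd}|\rho_\Omega^S - \chi_\Omega \ast \tilde{S}|^2 \, dz \leq \|\rho_\Omega^S - \chi_\Omega \ast \tilde{S}\|_{L^1}$. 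Then Theorem \ref{thm:lonebound} yields
\begin{equation*}
	\|\rho_\Omega^S - \chi_\Omega \ast \tilde{S}\|_{L^1} \leq 1 + 4\|S\|_{M^*_{op}}\sqrt{|\partial \Omega|\,|\Omega|}.
\end{equation*}
Finally I would use the hypothesis $\|S\|^2_{M_{op}^*}|\partial\Omega| \geq 1$ to absorb the constant term: since $1 \leq \|S\|^2_{M_{op}^*}|\partial\Omega|$, and assuming $|\Omega| \geq 1$ (or comparing against $\|S\|^2_{M_{op}^*}|\partial\Omega|$ directly), both summands are dominated by a constant multiple of $\|S\|^2_{M_{op}^*}|\partial\Omega|$, which gives the claimed bound up to the implied absolute constant in $\lesssim$.

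\textbf{Main obstacle.} The routine parts are Chebyshev and the pointwise $\leq 1$ reduction; the genuinely delicate point is the final absorption step, where one must verify that $4\|S\|_{M^*_{op}}\sqrt{|\partial\Omega|\,|\Omega|}$ is controlled by $\|S\|^2_{M_{op}^*}|\partial\Omega|$. This requires a comparison between $\sqrt{|\Omega|}$ and $\|S\|_{M^*_{op}}\sqrt{|\partial\Omega|}$, and it is not obvious that this holds for all admissible $\Omega$ — one likely needs either an additional normalization from the hypothesis $\|S\|^2_{M_{op}^*}|\partial\Omega| \geq 1$ together with an isoperimetric-type comparison of $|\Omega|$ and $|\partial\Omega|$, or a careful tracking of which geometric quantity the target really scales with. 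I expect the cleanest route is to follow the corresponding bookkeeping in \cite[Thm 1.5]{Abreu:2016}, where the same tension between perimeter and volume is resolved, and to argue that under $\|S\|^2_{M_{op}^*}|\partial\Omega| \geq 1$ the cross term is subsumed, so that the stated $\lesssim \frac{1}{\delta^2}\|S\|^2_{M_{op}^*}|\partial\Omega|$ emerges with an absolute implied constant.
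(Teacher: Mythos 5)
Your framework (Chebyshev plus the pointwise bound $|\rho_\Omega^S-\chi_\Omega\ast\tilde{S}|\leq 1$ to pass from $L^2$ to $L^1$) is sound, but the absorption step you flag as the main obstacle is a genuine gap, not a bookkeeping issue. You need $4\|S\|_{M_{op}^*}\sqrt{|\partial\Omega|\,|\Omega|}\lesssim \|S\|_{M_{op}^*}^2|\partial\Omega|$, i.e.\ $|\Omega|\lesssim \|S\|_{M_{op}^*}^2|\partial\Omega|$, and this fails badly for the very domains the paper cares about: for $\Omega=B(0,R)$ in $\Rdd$ one has $|\Omega|\sim R^{2d}$ while $|\partial\Omega|\sim R^{2d-1}$, so the ratio $|\Omega|/|\partial\Omega|\sim R$ is unbounded. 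The isoperimetric inequality runs in the wrong direction here (it bounds $|\partial\Omega|$ from below by a power of $|\Omega|$ \emph{less than} one), so no normalization rescues the comparison. With the input from Theorem \ref{thm:lonebound} your scheme can only deliver $\frac{1}{\delta^2}\bigl(1+\|S\|_{M_{op}^*}\sqrt{|\partial\Omega|\,|\Omega|}\bigr)$, which is strictly weaker than the claim.

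The paper avoids this entirely by working with the spectral decomposition rather than the global $L^1$ norm. Writing $|\rho_\Omega^S(z)-\chi_\Omega\ast\tilde{S}(z)|\leq\sum_k\mu_k Q_S(h_k^\Omega)(z)$ with $\mu_k=1-\lambda_k^\Omega$ for $k\leq A_\Omega$ and $\mu_k=\lambda_k^\Omega$ otherwise, it splits the indices into eigenvalues near $1$, the plunge region, and eigenvalues near $0$. The first and third groups contribute at most $2\delta$ pointwise because $\sum_k Q_S(h_k^\Omega)(z)=1$ (Lemma \ref{lem:inversion}), and the plunge region has cardinality $\lesssim\frac{1}{\delta}\|S\|_{M_{op}^*}^2|\partial\Omega|$ by Lemma \ref{lem:opnormbound}; a weak-$L^1$ Chebyshev estimate applied only to that middle sum, each term of which has unit $L^1$ norm, then yields the $\frac{1}{\delta^2}\|S\|_{M_{op}^*}^2|\partial\Omega|$ bound. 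That eigenvalue-counting step is the idea your proposal is missing. (As an aside, your outline would go through if you replaced Theorem \ref{thm:lonebound} by the sharper $L^1$ bound of Theorem \ref{thm:sharpbound}, which is of order $|\partial\Omega|$ and is proved independently later in the paper; but as written the argument does not close.)
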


\begin{proof}
	By proposition \ref{prop:associativity} we find that
	\begin{align*}
  |\rho^S_{\Omega}(z)-\chi_{\Omega}\ast \tilde{S}(z)|&= \left| \sum_{k=1}^{A_{\Omega}} Q_S(h_k^\Omega)(z)-\sum_{k=1}^{\infty}\lambda_k^{\Omega} Q_S(h_k^{\Omega})(z) \right|  \\
  &\leq \sum_{k=1}^{\infty}\mu_k Q_S(h_k^\Omega)(z),
\end{align*}
where we have introduced $\mu_k=\lambda_k^\Omega$ for $k> A_\Omega$ and $\mu_k=1-\lambda_k^\Omega$ for $k\leq A_\Omega$. To obtain our desired bound, we will split this sum into three parts. Following the lead of the proof in \cite[Prop. 4.4]{Abreu:2016}, we assume that $0<\delta \leq \frac{1}{2}$ and define 
\begin{align*}
  a_\delta &:= \#\{k:\lambda_k^\Omega> 1-\delta\}, \\ 
    b_\delta &:= \#\{k:\lambda_k^\Omega>\delta\}. \\
\end{align*}
Then let 
\begin{align*}
  a_\delta^\prime &:= \min \{a_\delta, A_\Omega\}, \\ 
    b_\delta^\prime &:= \max\{b_\delta, A_\Omega\}. \\
\end{align*}
Now note that $\sum_{k=1}^\infty Q_S(h_k)(z)=1$ for all $z\in \Rdd$ by lemma \ref{lem:inversion} and that $\mu_k\leq \delta$ for $k\notin [a_\delta^\prime+1,b_\delta^\prime]$, hence
\begin{align*}
	\sum_{k=1}^{\infty}\mu_k Q_S(h_k^\Omega)(z)&=\sum_{k=1}^{a_\delta^\prime} \mu_k Q_S(h_k^\Omega)(z) + \sum_{a_\delta^\prime+1}^{b_\delta^\prime} \mu_k Q_S(h_k^\Omega)(z)+ \sum_{k=b_\delta^\prime+1}^{\infty} \mu_k Q_S(h_k^\Omega)(z) \\
	&\leq 2\delta + \sum_{a_\delta^\prime+1}^{b_\delta^\prime} \mu_k Q_S(h_k^\Omega)(z).
\end{align*}
As a consequence we clearly get $$\left| \left\{ z\in \Rdd : \left| \rho_\Omega^S(z)-\chi_\Omega\ast \tilde{S}(z)  \right|> 3\delta \right\} \right|\leq \left| \left\{ z\in \Rdd : \sum_{a_\delta^\prime+1}^{b_\delta^\prime} \mu_k Q_S(h_k^\Omega)(z) > \delta \right\} \right|.$$ To control this expression, one may use lemma \ref{lem:opnormbound} and the assumptions $0<\delta \leq \frac{1}{2}$, $\|S\|^2_{M_{op}^*}|\partial \Omega|\geq 1$ to get (see \cite[Prop. 4.4]{Abreu:2016} for details) $$0\leq b_\delta^\prime-a_\delta^\prime\lesssim \frac{1}{\delta}\|S\|_{M_{op}^*} |\partial \Omega|.$$
By using $0\leq \mu_k \leq 1$ and $\|Q_S(h_k^{\Omega})\|_{L^1}=\|h_k^\Omega\|_{L^2}^2=1$ (see \eqref{eq:defofcohen} and lemma \ref{lem:werner}), we then get
 \begin{align*}
	\left| \left\{ z\in \Rdd : \sum_{a_\delta^\prime+1}^{b_\delta^\prime} \mu_k Q_S(h_k^\Omega)(z) > \delta \right\} \right|&\leq \frac{1}{\delta} \left\|\sum_{a_\delta^\prime+1}^{b_\delta^\prime} \mu_k Q_S(h_k^\Omega)\right\|_{L^1} = \frac{1}{\delta} \sum_{a_\delta^\prime+1}^{b_\delta^\prime} \mu_k \left\| Q_S(h_k^\Omega)\right\|_{L^1} \\
	 &\leq \frac{1}{\delta } \sum_{a_\delta^\prime+1}^{b_\delta^\prime} 1 \lesssim \frac{1}{\delta^2}\|S\|_{M_{op}^*} |\partial \Omega|.
\end{align*}

 The substitution $\delta \mapsto \frac{\delta}{3}$ proves the result for $0<\delta \leq \frac{3}{2}$, and the result is trivial for $\delta >1$ since we always have the bound $ |\rho^S_{\Omega}(z)-\chi_{\Omega}\ast \tilde{S}(z)|\leq \sum_{k=1}^{\infty}\mu_k Q_S(h_k^\Omega)(z)\leq \sum_{k=1}^{\infty} Q_S(h_k^\Omega)(z)=1$ by lemma \ref{lem:inversion}.
\end{proof}

\nonasymptotic*
\begin{proof}
	By the previous lemma we have the weak-$L^2$ bound $$\left| \left\{ z\in \Rdd : \left| \rho_\Omega^S(z)-\chi_\Omega\ast \tilde{S}(z)  \right|> \delta/2 \right\} \right|\lesssim \frac{1}{\delta^2}\|S\|^2_{M_{op}^*}|\partial \Omega|,$$ and we obviously have the weak-$L^1$ bound 
	$$\left| \left\{z\in \Rdd : |\chi_\Omega \ast \tilde{S}(z)-\chi_\Omega(z)|> \delta/2 \right\} \right|\leq \frac{2}{\delta}\|\chi_\Omega \ast \tilde{S}(z)-\chi_\Omega(z)\|_{L^1}\leq \frac{1}{\delta} \|S\|^2_{M_{op}^*}|\partial \Omega|,$$
	where the last bound is lemma \ref{lem:boundedvariation}. Combining these bounds, we get
	$$\left| \left\{ z\in \Rdd : \left| \rho_\Omega^S(z)-\chi_\Omega(z)  \right|> \delta \right\} \right|\lesssim \frac{1}{\delta^2}\|S\|^2_{M_{op}^*}|\partial \Omega|+ \frac{1}{\delta} \|S\|^2_{M_{op}^*}|\partial \Omega|.$$ When $\delta\leq 2$ we have that $1/\delta \leq 2/\delta^2,$ so the result is proved in this case. In fact, this is the only case we need to consider, as $\rho_\Omega^S(z), \chi_\Omega (z)\leq 1$ implies $\{z\in \Rdd : |\rho_\Omega^S(z)-\chi_\Omega(z)|>\delta\}=\emptyset$ for $\delta >2$.
\end{proof}
\section{Sharp bounds for accumulated Cohen's class distributions} \label{sec:sharp}
As a simple consequence of theorem \ref{thm:lonebound} one can derive the bound
\begin{equation*}
	\|\chi_\Omega-\rho_\Omega^S\|_{L^1}\lesssim \sqrt{|\partial \Omega||\Omega|}
\end{equation*}
for $S,\Omega$ satisfying the assumptions of that theorem and $|\Omega|\geq 1$, see \cite[Cor. 5.1]{Abreu:2016} for a proof when $Q_S$ is a spectrogram. In \cite{Abreu:2017}, Abreu et al. were able to improve this bound in the case of spectrograms to 
\begin{equation} \label{eq:sharpbound}
	\|\chi_\Omega-\rho_\Omega^S\|_{L^1}\lesssim |\partial \Omega|.
\end{equation}
The very elegant proof of \eqref{eq:sharpbound} in \cite{Abreu:2017} exploits the spectral theory of localization operators. Since section \ref{sec:spectraltheory} indicates that the spectral theory is largely the same for \textit{generalized} localization operators, we will be able to prove \eqref{eq:sharpbound} for general density operators $S$ based on the same arguments.
\begin{thm} \label{thm:sharpbound}
	Fix $\epsilon >0$. If $S\in M_{op}^*$ and $\Omega \subset \Rdd$ is a compact domain with finite perimeter satisfying $|\partial \Omega|\geq \epsilon$, then
	\begin{equation*}
		\|  \rho_\Omega^S -\chi_\Omega\|_{L^1}\leq (1/\epsilon+2\|S\|_{M_{op}^*}^2) |\partial \Omega|.
	\end{equation*}
\end{thm}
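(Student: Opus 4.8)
The plan is to run the two-step triangle-inequality scheme underlying all the approximation results in this paper,
$$\|\rho_\Omega^S - \chi_\Omega\|_{L^1} \leq \|\rho_\Omega^S - \chi_\Omega\ast \tilde{S}\|_{L^1} + \|\chi_\Omega\ast \tilde{S} - \chi_\Omega\|_{L^1},$$
but to bound the first term via the \emph{projection functional} instead of by the crude estimate of theorem \ref{thm:lonebound}, which only yields the weaker rate $\sqrt{|\partial\Omega||\Omega|}$. The second term is immediate from \eqref{eq:l1boundeasy} (equivalently lemma \ref{lem:boundedvariation}): $\|\chi_\Omega\ast \tilde{S} - \chi_\Omega\|_{L^1} \leq \|S\|_{M_{op}^*}^2|\partial\Omega|$, which accounts for one of the two copies of $\|S\|_{M_{op}^*}^2$ in the claimed constant.

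For the first term I would recycle the computation in the proof of lemma \ref{lem:abreu}: writing $\rho_\Omega^S - \chi_\Omega\ast \tilde{S} = \left(\sum_{k\leq A_\Omega} h_k^\Omega\otimes h_k^\Omega - \chi_\Omega\star S\right)\star\check{S}$ and using $\|\check{S}\|_{\tco}=1$ together with $\|Q_S(h_k^\Omega)\|_{L^1}=1$, one obtains
$$\|\rho_\Omega^S - \chi_\Omega\ast \tilde{S}\|_{L^1} \leq \sum_{k=1}^{A_\Omega}(1-\lambda_k^\Omega) + \sum_{k=A_\Omega+1}^\infty \lambda_k^\Omega =: \Sigma.$$
Because $\sum_k\lambda_k^\Omega = |\Omega|$ by lemma \ref{lem:traces}, this rearranges as $\Sigma = (A_\Omega - |\Omega|) + 2\sum_{k>A_\Omega}\lambda_k^\Omega$; the constant part is harmless, since $A_\Omega - |\Omega| \leq 1 \leq |\partial\Omega|/\epsilon$ by the hypothesis $|\partial\Omega|\geq\epsilon$, and this is exactly what produces the $1/\epsilon$ term. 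Everything therefore reduces to controlling the linear tail $\sum_{k>A_\Omega}\lambda_k^\Omega$.

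Here the projection functional enters. By lemma \ref{lem:traces} and the symmetry $\tilde{S}(z-z')=\tilde{S}(z'-z)$ used in lemma \ref{lem:opnormbound}, one has
$$\sum_k \lambda_k^\Omega(1-\lambda_k^\Omega) = \tr(\chi_\Omega\star S) - \tr((\chi_\Omega\star S)^2) = \int_\Omega\bigl(1-\chi_\Omega\ast \tilde{S}(z)\bigr)\,dz \leq \|\chi_\Omega - \chi_\Omega\ast \tilde{S}\|_{L^1}\leq \|S\|_{M_{op}^*}^2|\partial\Omega|.$$
The task is then to convert this bound on $\sum_k\lambda_k^\Omega(1-\lambda_k^\Omega)$ into a bound on $\sum_{k>A_\Omega}\lambda_k^\Omega$. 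I would split the spectrum at the threshold $1/2$: eigenvalues below $1/2$ satisfy $\lambda_k^\Omega \leq 2\lambda_k^\Omega(1-\lambda_k^\Omega)$, so their contribution is controlled by the projection functional, while the eigenvalues above $1/2$ sitting beyond index $A_\Omega$ are few — their number is $\#\{k:\lambda_k^\Omega>1/2\} - A_\Omega$, which lemma \ref{lem:opnormbound} with $\delta=1/2$ bounds by $|\Omega| + 2\|S\|_{M_{op}^*}^2|\partial\Omega| - A_\Omega \leq 2\|S\|_{M_{op}^*}^2|\partial\Omega|$. Monotonicity of $\{\lambda_k^\Omega\}$ is what makes this plunge-region counting legitimate.

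The main obstacle is precisely this last conversion: extracting the sharp linear-in-$|\partial\Omega|$ bound on $\sum_{k>A_\Omega}\lambda_k^\Omega$ while keeping the constant down to the stated $2\|S\|_{M_{op}^*}^2$. The naive threshold-$1/2$ splitting above loses a factor of $2$ through the inequality $\lambda\leq 2\lambda(1-\lambda)$ and needs the more careful bookkeeping of \cite{Abreu:2017} — comparing the truncation at $A_\Omega$ with the truncation at the spectral threshold and cancelling the overlapping block rather than estimating each block separately. This is the step where the improvement from the $\sqrt{|\partial\Omega||\Omega|}$ rate of theorem \ref{thm:lonebound} to the sharp $|\partial\Omega|$ rate is genuinely won, and where the adaptation of the spectrogram argument of \cite{Abreu:2017} to a general density operator $S$ must be verified with care.
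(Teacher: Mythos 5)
Your plan has two genuine problems, and you have correctly identified only one of them. First, the step you flag as the ``main obstacle'' is indeed not carried out: you need $\sum_{k>A_\Omega}\lambda_k^\Omega$ controlled by the projection functional with constant $1$, but the threshold-$1/2$ splitting you sketch gives at best $\sum_{k>A_\Omega}\lambda_k^\Omega\leq 4\|S\|_{M_{op}^*}^2|\partial\Omega|$ (a factor $2$ from $\lambda\leq 2\lambda(1-\lambda)$ on the low block and another $2$ from lemma \ref{lem:opnormbound} with $\delta=1/2$ on the high block), and you defer the repair to ``careful bookkeeping'' without supplying it. The missing trick is to split $\sum_k\lambda_k^\Omega(1-\lambda_k^\Omega)$ at the \emph{index} $A_\Omega$ rather than at an eigenvalue threshold and use monotonicity: $\sum_{k\leq A_\Omega}\lambda_k^\Omega(1-\lambda_k^\Omega)\geq\lambda_{A_\Omega}^\Omega\sum_{k\leq A_\Omega}(1-\lambda_k^\Omega)$ and $\sum_{k>A_\Omega}\lambda_k^\Omega(1-\lambda_k^\Omega)\geq(1-\lambda_{A_\Omega}^\Omega)\sum_{k>A_\Omega}\lambda_k^\Omega$; adding these and using $\sum_k\lambda_k^\Omega=|\Omega|$ and $A_\Omega\geq|\Omega|$ yields $\tr(\chi_\Omega\star S)-\tr((\chi_\Omega\star S)^2)\geq\sum_{k>A_\Omega}\lambda_k^\Omega$ with no loss at all.

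Second, and more structurally, even with that optimal tail bound your decomposition cannot reach the stated constant. The triangle inequality through $\chi_\Omega\ast\tilde{S}$ costs one copy of $\|S\|_{M_{op}^*}^2|\partial\Omega|$ up front, and your first term contributes $1+2\sum_{k>A_\Omega}\lambda_k^\Omega\leq 1+2\|S\|_{M_{op}^*}^2|\partial\Omega|$, for a total of $(1/\epsilon+3\|S\|_{M_{op}^*}^2)|\partial\Omega|$. The paper's proof avoids the intermediate term $\chi_\Omega\ast\tilde{S}$ entirely: since $0\leq\rho_\Omega^S\leq 1$, and since $\int_\Omega\rho_\Omega^S(z)\,dz=\sum_{k\leq A_\Omega}\lambda_k^\Omega$ (by the duality $\inner{\chi_\Omega}{\check{S}\star(h_k^\Omega\otimes h_k^\Omega)}=\lambda_k^\Omega$) while $\int_{\Rdd}\rho_\Omega^S(z)\,dz=A_\Omega$, one computes exactly
\begin{equation*}
\|\rho_\Omega^S-\chi_\Omega\|_{L^1}=\Bigl(|\Omega|-\sum_{k\leq A_\Omega}\lambda_k^\Omega\Bigr)+\Bigl(A_\Omega-\sum_{k\leq A_\Omega}\lambda_k^\Omega\Bigr)\leq 1+2\sum_{k>A_\Omega}\lambda_k^\Omega,
\end{equation*}
and only then invokes the projection functional, giving the clean $(1/\epsilon+2\|S\|_{M_{op}^*}^2)|\partial\Omega|$. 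Your identification of the upper bound $\tr(\chi_\Omega\star S)-\tr((\chi_\Omega\star S)^2)\leq\|S\|_{M_{op}^*}^2|\partial\Omega|$ is correct and is exactly what the paper uses; the gap lies in the decomposition of the $L^1$ norm and in the unexecuted conversion from the projection functional to the eigenvalue tail.
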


\begin{proof}
	To estimate the left hand side, we will split the integral into two parts. First note that since $0\leq \rho_\Omega^S(z) \leq 1$ for any $z\in \Rdd$,
	\begin{align*}
		\int_{\Omega} |\rho_\Omega^S(z) - \chi_\Omega(z)| \ dz &= \int_\Omega (1-\rho_\Omega^S(z)) \ dz \\
		&= |\Omega|-\int_\Omega \sum_{k=1}^{A_\Omega} \check{S}\star (h_k^\Omega \otimes h_k^\Omega)(z)  \ dz \\
		&= |\Omega|-\sum_{k=1}^{A_\Omega} \int_{\Rdd} \chi_\Omega(z) \left( \check{S}\star  (h_k^\Omega \otimes h_k^\Omega)\right)(z)  \ dz \\
		&= |\Omega|-\sum_{k=1}^{A_\Omega} \lambda_k^\Omega.
	\end{align*}
	The final equality uses a relation between convolutions and duality, namely the fact that $\inner{\chi_\Omega}{\check{S}\star (h_k^\Omega\otimes h_k^\Omega)}_{L^\infty,L^1}=\inner{\chi_{\Omega}\star S}{h_k^\Omega \otimes h_k^\Omega}_{\bo,\tco}$, where the bracket denotes duality. See \cite{Skrettingland:2017} for a verification. By using the eigenfunctions $\{h_k^\Omega\}_{k=1}^\infty$ as the orthonormal basis to calculate the trace, one easily finds that $\inner{\chi_{\Omega}\star S}{h_k^\Omega \otimes h_k^\Omega}_{\bo,\tco}=\tr((\chi_{\Omega}\star S) h_k^\Omega \otimes h_k^\Omega)=\lambda_k^\Omega$.  The other part of the integral satisfies
	\begin{align*}
		\int_{\Rdd \setminus \Omega} |\rho_\Omega^S(z) - \chi_\Omega(z)| \ dz &= \int_{\Rdd \setminus \Omega} \rho_\Omega^S(z) \ dz \\
		&= \int_{\Rdd} \rho_\Omega^S(z) \ dz - \int_\Omega \rho_\Omega^S(z) \ dz \\
		&=\sum_{k=1}^{A_\Omega} \int_{\Rdd} \check{S}\star (h_k^\Omega \otimes h_k^\Omega)(z)  \ dz - \sum_{k=1}^{A_\Omega} \lambda_k^\Omega \\
		&= A_\Omega - \sum_{k=1}^{A_\Omega} \lambda_k^\Omega\leq 1+|\Omega|-\sum_{k=1}^{A_\Omega} \lambda_k^\Omega, 
	\end{align*}
	where we have used lemma \ref{lem:werner} to calculate $\int_{\Rdd} \check{S}\star (h_k^\Omega \otimes h_k^\Omega)(z)  \ dz=1$, and used our expression for  $\int_\Omega \rho_\Omega^S(z) \ dz$ from the previous calculation. In total
	\begin{equation} \label{eq:sharp}
		\int_{\Rdd} |\rho_\Omega^S(z) - \chi_\Omega(z)| \ dz\leq 1+2\left( |\Omega|-\sum_{k=1}^{A_\Omega} \lambda_k^\Omega \right).
	\end{equation}
	To bound $|\Omega|-\sum_{k=1}^{A_\Omega} \lambda_k^\Omega$ we will look at $\tr(\chi_\Omega \star S)-\tr((\chi_\Omega \star S)^2).$ On the one hand it follows easily from lemma \ref{lem:traces} that 
	\begin{align*}
		\tr(\chi_\Omega \star S)-\tr((\chi_\Omega \star S)^2) &= \int_{\Omega} \left( 1- \chi_{\Omega}\ast \tilde{S}(z) \right) \ dz \\
		&\leq \|\chi_\Omega \ast \tilde{S} - \chi_\Omega \|_{L^1} \\
		&\leq |\partial \Omega | \|S\|_{M_{op}^*}^2, 
	\end{align*}
	where the last inequality is lemma \ref{lem:boundedvariation}. On the other hand we know that $\tr(\chi_\Omega \star S)=\sum_{k=1}^\infty \lambda_k^\Omega = |\Omega|$ and $\tr((\chi_\Omega \star S)^2)=\sum_{k=1}^{\infty} \left(\lambda_k^\Omega\right)^2$, which leads to the following estimate:
	\begin{align*}
		\tr(\chi_\Omega \star S)-\tr((\chi_\Omega \star S)^2)&= \sum_{k=1}^{A_\Omega} \lambda_k^\Omega (1-\lambda_k^\Omega)+\sum_{k=A_\Omega+1}^\infty  \lambda_k^\Omega (1-\lambda_k^\Omega) \\
		&\geq \lambda_{A_\Omega}^\Omega  \sum_{k=1}^{A_\Omega} (1-\lambda_k^\Omega) +(1-\lambda_{A_\Omega}^\Omega)\sum_{k=A_\Omega+1}^\infty  \lambda_k^\Omega \\
		&= \lambda_{A_\Omega}^\Omega A_\Omega - \lambda_{A_\Omega}^\Omega |\Omega|+\sum_{k=A_\Omega +1}^{\infty} \lambda_k^\Omega \\
		&= \lambda_{A_\Omega}^\Omega(A_\Omega-|\Omega|) + |\Omega|-\sum_{k=1}^{A_\Omega}\lambda_k^\Omega \\ 
		&\geq |\Omega|-\sum_{k=1}^{A_\Omega}\lambda_k^\Omega.
	\end{align*}
	We therefore have $|\Omega|-\sum_{k=1}^{A_\Omega}\lambda_k^\Omega \leq \tr(\chi_\Omega \star S)-\tr((\chi_\Omega \star S)^2)\leq |\partial \Omega | \|S\|_{M_{op}^*}^2,$ and inserting this into \eqref{eq:sharp} gives us $$\int_{\Rdd} |\rho_\Omega^S(z) - \chi_\Omega(z)| \ dz\leq \left(1/\epsilon+2 \|S\|_{M_{op}^*}^2 \right)|\partial \Omega |$$ when we also use the assumption $|\partial \Omega |/\epsilon\geq 1$. 
\end{proof}

\subsection{Sharpness of the bound}

By considering Euclidean balls $B(z,R)=\{z'\in \Rdd : |z|<R\}$, it was shown in \cite{Abreu:2017} that \eqref{eq:sharpbound} gives a sharp bound for the convergence of accumulated spectrograms. As we will now show (see theorem \ref{thm:sharprate}), the same is true when the spectrogram is replaced with the Cohen class distribution $Q_S$ for  $S\in M_{op}^* .$ Our approach is inspired by \cite{DeMari:2002}, which deals with the case of spectrograms using the associated reproducing kernel Hilbert spaces. These Hilbert spaces are not available for general density operators $S$, so our proofs must instead rely on techniques from quantum harmonic analysis. In the terminology of \cite{Feichtinger:2014,Feichtinger:2015}, the following result gives an expression for the \textit{projection functional} applied to $\chi_\Omega \star S$.
\begin{lem} \label{lem:tracedifference}
	Let $S$ be a density operator and $\Omega \subset \Rdd$ a compact domain. Then $$\tr(\chi_\Omega \star S)-\tr((\chi_\Omega \star S)^2)=\int_{\Omega}\int_{\Rdd\setminus \Omega } \tilde{S}(z-z')\ dz' dz. $$
\end{lem}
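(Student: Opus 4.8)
The plan is to reduce everything to the two trace formulas already established in Lemma \ref{lem:traces} and then exploit the normalization $\int_{\Rdd}\tilde{S}(w)\,dw=1$. Since $S$ is a density operator, $\tr(S)=1$, and Lemma \ref{lem:traces} gives directly $\tr(\chi_\Omega\star S)=|\Omega|\tr(S)=|\Omega|$ together with $\tr((\chi_\Omega\star S)^2)=\int_\Omega\int_\Omega\tilde{S}(z-z')\,dz\,dz'$. Subtracting these yields
$$\tr(\chi_\Omega\star S)-\tr((\chi_\Omega\star S)^2)=|\Omega|-\int_\Omega\int_\Omega\tilde{S}(z-z')\,dz\,dz',$$
so it remains only to identify the right-hand side with the desired double integral over $\Omega\times(\Rdd\setminus\Omega)$.

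The key identity I would use is that, for each fixed $z$, the integral of $\tilde{S}$ over all of phase space equals one. Indeed, by Lemma \ref{lem:werner} applied to $\tilde{S}=S\star\check{S}$, together with the translation invariance of Lebesgue measure,
$$\int_{\Rdd}\tilde{S}(z-z')\,dz'=\int_{\Rdd}\tilde{S}(w)\,dw=\tr(S)\tr(\check{S})=1.$$
I would then rewrite the measure of the domain as
$$|\Omega|=\int_\Omega 1\,dz=\int_\Omega\int_{\Rdd}\tilde{S}(z-z')\,dz'\,dz,$$
and split the inner integral across $\Rdd=\Omega\cup(\Rdd\setminus\Omega)$. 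The $\Omega$-part of this splitting is exactly $\int_\Omega\int_\Omega\tilde{S}(z-z')\,dz'\,dz$, which cancels against the subtracted term, leaving precisely
$$|\Omega|-\int_\Omega\int_\Omega\tilde{S}(z-z')\,dz'\,dz=\int_\Omega\int_{\Rdd\setminus\Omega}\tilde{S}(z-z')\,dz'\,dz,$$
as claimed.

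There is no serious obstacle here; the argument is essentially a bookkeeping computation once Lemma \ref{lem:traces} is in hand. The only point requiring a little care is justifying the splitting and any implicit interchange of integration order, but this is immediate: by Lemma \ref{lem:positiveandidentity} the function $\tilde{S}=S\star\check{S}$ is nonnegative, and by Lemma \ref{lem:werner} it lies in $L^1(\Rdd)$, so Tonelli's theorem applies and all the integrals above are finite and may be freely manipulated. I would note in passing that this is the form of the projection functional that drives the sharpness results in the following section, which is why isolating the clean expression $\int_\Omega\int_{\Rdd\setminus\Omega}\tilde{S}(z-z')\,dz'\,dz$ is worthwhile.
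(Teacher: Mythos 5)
Your proposal is correct and follows essentially the same route as the paper: both start from the two trace formulas of Lemma \ref{lem:traces}, use Lemma \ref{lem:werner} to write $|\Omega|=\int_\Omega\int_{\Rdd}\tilde{S}(z-z')\,dz'\,dz$, and split the inner integral over $\Omega$ and $\Rdd\setminus\Omega$. Your added remark on Tonelli's theorem is a harmless extra justification the paper leaves implicit.
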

\begin{proof}
From lemma \ref{lem:traces} we have that 
\begin{align*}
	\tr(\chi_\Omega \star  S)&=\int_{\Rdd} \chi_\Omega (z)\ dz \\
	 \tr((\chi_{\Omega}\star S)^2)&=\int_{\Omega}\int_{\Omega} \tilde{S}(z-z^{\prime}) \ dz' \ dz.
\end{align*}
 In order to combine these two formulas, we note that $$\int_{\Rdd} \tilde{S}(z-z')\ dz= \int_{\Rdd} S\star \check{S}(z-z') \ dz'=\tr(S)\tr(S)=1$$ for each $z\in \Rdd$ by lemma \ref{lem:werner}. Hence we can in fact write 
\begin{align*}
	\tr(\chi_\Omega \star S)&=\int_{\Rdd} \chi_\Omega(z) \int_{\Rdd} \tilde{S}(z-z') \ dz' \ dz \\
	&= \int_{\Omega} \int_{\Rdd} \tilde{S}(z-z') \ dz' \ dz.
\end{align*}

We may now combine our formulas to get that 
\begin{align*}
	\tr(\chi_\Omega \star S)-\tr((\chi_\Omega \star S)^2)&= \int_{\Omega} \int_{\Rdd} \tilde{S}(z-z') \ dz' \ dz - \int_{\Omega}\int_{\Omega} \tilde{S}(z-z^{\prime}) \ dz' \ dz \\
	&= \int_\Omega \int_{\Rdd\setminus \Omega} \tilde{S}(z-z') \ dz' \ dz.
\end{align*}

\end{proof}
We will also need the following technical consequence of the continuity of $\tilde{S}$.
\begin{lem} \label{lem:lowerbound}
	Let $S$ be a density operator. There exist constants $r_S>0$ and $m>0$ such that whenever $r\leq r_S$ 
	\begin{equation} \label{eq:lowerbound}
		\tilde{S}(z-z') \geq \frac{m}{|B(0,r) |} \int_{\Rdd} \chi_{B(z'',r)}(z)\chi_{B(z'',r)}(z') \ dz''
	\end{equation}
	for all $z,z'\in \Rdd$.
\end{lem}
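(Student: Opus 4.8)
The plan is to exploit two facts about $\tilde{S}=S\star\check{S}$ that are already recorded in the excerpt: it is a \emph{continuous} function (noted in section \ref{sec:notation}), and its value at the origin is strictly positive. Indeed, as observed in the proof of lemma \ref{lem:traces}, $T\star\check{T}(0)=\tr(T^2)$ for any $T\in\tco$, so $\tilde{S}(0)=\tr(S^2)>0$, the strict inequality holding because a density operator has $\tr(S)=1$ and hence $\tr(S^2)=\sum_k(\lambda_k)^2>0$ in its spectral representation. Combining continuity with $\tilde{S}(0)>0$, I would fix constants $r_S>0$ and $m>0$ such that
\begin{equation*}
	\tilde{S}(w)\geq m \hskip 1em \text{whenever } |w|<2r_S.
\end{equation*}

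The second ingredient is a geometric reading of the right-hand side of \eqref{eq:lowerbound}. For fixed $z,z'$ the integrand $\chi_{B(z'',r)}(z)\chi_{B(z'',r)}(z')$ equals $1$ precisely when $|z''-z|<r$ and $|z''-z'|<r$, so the integral over $z''$ equals the volume of $B(z,r)\cap B(z',r)$. This volume vanishes unless $|z-z'|<2r$, and in all cases it is bounded above by $|B(0,r)|$, since the intersection is contained in either ball. Consequently the right-hand side of \eqref{eq:lowerbound} is at most $\frac{m}{|B(0,r)|}\cdot|B(0,r)|=m$ when $|z-z'|<2r$, and is exactly $0$ otherwise.

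The proof then closes by a case distinction, assuming $r\leq r_S$. If $|z-z'|\geq 2r$, the right-hand side is $0$, which is dominated by $\tilde{S}(z-z')\geq 0$ since $\tilde{S}$ is a positive function (lemma \ref{lem:positiveandidentity}). If instead $|z-z'|<2r\leq 2r_S$, then $\tilde{S}(z-z')\geq m$ by the choice of $r_S$ and $m$, while the right-hand side is at most $m$ by the volume bound above; hence the inequality holds. I do not expect a genuine obstacle here: the only points requiring care are verifying $\tilde{S}(0)=\tr(S^2)>0$ and identifying the inner integral as the volume of a ball intersection, after which the constant $m$ can be taken to be the uniform lower bound on $\tilde{S}$ near the origin.
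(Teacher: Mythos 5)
Your proposal is correct and follows essentially the same argument as the paper: both use the continuity of $\tilde{S}$ together with $\tilde{S}(0)=\tr(S^2)>0$ to fix a uniform lower bound $m$ on a neighborhood $B(0,2r_S)$ of the origin, identify the integral as the volume of $B(z,r)\cap B(z',r)$ (vanishing when $|z-z'|\geq 2r$ and bounded by $|B(0,r)|$ otherwise), and conclude by the same case distinction.
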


\begin{proof}
	The function $\tilde{S}=S\star \check{S}$ is continuous, positive and satisfies $S\star \check{S}(0)=\tr(S^2)>0$. Let $m=\tr(S^2)/2>0$. By continuity of $\tilde{S}$ at the origin, there must exist a constant $\delta>0$ such that $S\star \check{S}(z)>m$ whenever $z\in B(0,\delta).$ Now let $r_S=\delta/2$, and consider the integral $$\int_{\Rdd} \chi_{B(z'',r)}(z)\chi_{B(z'',r)}(z') \ dz''$$ for $r\leq r_S.$ We note that the integrand is zero whenever $z-z'\notin B(0,2r)$. When $z-z'\in B(0,2r) \subset B(0,\delta)$ we know by construction of $\delta$ that $S\star \check{S}(z-z')\geq m$. We may also estimate that for any $z,z'\in \Rdd$    
	\begin{align*}
		\int_{\Rdd} \chi_{B(z'',r)}(z)\chi_{B(z'',r)}(z') \ dz''&\leq \int_{\Rdd} \chi_{B(z'',r)}(z) \ dz'' \\
		&= |B(0,r)|.
	\end{align*}  
	Hence \eqref{eq:lowerbound} holds: if $z-z'\notin B(0,2r)$ it holds trivially as the integrand is zero, and if $z-z'\in B(0,2r)$, we know that $S\star \check{S}(z-z')\geq m$ and the integral is bounded from above by $B(0,r)$.     
\end{proof}
	The previous two results lead to a lower bound for the projection functional for mixed-state localization operators with $\Omega=B(0,R).$
	\begin{prop} \label{prop:lowerbound}
	Let $S$ be a density operator. Then there exists a constant $C_S$ such that 
	\begin{equation*}
		\tr(\chi_{B(0,R)}\star S)-\tr((\chi_{B(0,R)} \star S)^2)\geq C_S R^{2d-1}, \hskip 1em \text{ for }R>1.
	\end{equation*}
\end{prop}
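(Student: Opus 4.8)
The plan is to combine the two lemmas just proved. Lemma \ref{lem:tracedifference} rewrites the projection functional as a double integral over $B(0,R)$ and its complement, and Lemma \ref{lem:lowerbound} supplies a pointwise lower bound for $\tilde S$. I would fix $r=\min\{r_S,2\}$, so that $r\le r_S$ (making Lemma \ref{lem:lowerbound} applicable) and $r/4\le 1/2$, and write
$$\tr(\chi_{B(0,R)}\star S)-\tr((\chi_{B(0,R)}\star S)^2)=\int_{B(0,R)}\int_{\Rdd\setminus B(0,R)}\tilde S(z-z')\,dz'\,dz.$$
Inserting the estimate of Lemma \ref{lem:lowerbound} produces a triple integral with nonnegative integrand, so Tonelli's theorem lets me move the $z''$-integration to the outside, at which point the inner integral factors:
$$\tr(\chi_{B(0,R)}\star S)-\tr((\chi_{B(0,R)}\star S)^2)\ge \frac{m}{|B(0,r)|}\int_{\Rdd}|B(z'',r)\cap B(0,R)|\cdot|B(z'',r)\setminus B(0,R)|\,dz''.$$

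Next I would discard everything except the spherical shell $A=\{z'':R-r/4\le |z''|\le R+r/4\}$, which only decreases the right-hand side since the integrand is nonnegative. The key geometric claim is that for every $z''\in A$ both factors are bounded below by $|B(0,r/4)|$, uniformly in $R$. To prove this, write $u=z''/|z''|$ (well defined since $|z''|\ge R-r/4>0$ for $R>1$) and set $p_\pm=z''\pm\tfrac r2 u$. Both small balls $B(p_\pm,r/4)$ lie inside $B(z'',r)$, because $|p_\pm-z''|+r/4=3r/4<r$. Moreover the radial bounds defining $A$ give $|p_-|+r/4=|z''|-r/4\le R$ and $|p_+|-r/4=|z''|+r/4\ge R$, so that $B(p_-,r/4)\subset B(z'',r)\cap B(0,R)$ and $B(p_+,r/4)\subset B(z'',r)\setminus B(0,R)$. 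Hence each factor is at least $|B(0,r/4)|$ on $A$.

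Finally I would estimate the measure of the shell. In $\Rdd$ one has $|A|=c_{2d}\big[(R+r/4)^{2d}-(R-r/4)^{2d}\big]$, and the mean value theorem together with $R-r/4\ge R/2$ for $R>1$ (using $r/4\le 1/2$) yields $|A|\gtrsim R^{2d-1}$. Combining the three steps gives
$$\tr(\chi_{B(0,R)}\star S)-\tr((\chi_{B(0,R)}\star S)^2)\ge \frac{m\,|B(0,r/4)|^2}{|B(0,r)|}\,|A|\ge C_S\,R^{2d-1},$$
with $C_S$ depending only on $S$ (through $m$ and $r_S$) and the dimension. I expect the middle paragraph to be the main obstacle: the content of the proposition is that the ball $B(z'',r)$ is genuinely split into two pieces of volume bounded below whenever its center sits in the boundary shell, uniformly as $R\to\infty$. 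The two–small–ball construction settles this cleanly and avoids any delicate computation of the volumes of spherical caps or of $B(z'',r)\cap B(0,R)$, which would otherwise be the awkward part of the argument.
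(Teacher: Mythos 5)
Your argument is correct and follows essentially the same route as the paper's proof: rewrite the projection functional via Lemma \ref{lem:tracedifference}, insert the lower bound of Lemma \ref{lem:lowerbound}, swap the order of integration, restrict to a spherical shell of width comparable to $r$ around $|z''|=R$, bound both volume factors below by $|B(0,r/4)|$, and estimate the shell's measure by $R^{2d-1}$. The only difference is cosmetic: your explicit two-ball construction with centers $z''\pm\tfrac r2 z''/|z''|$ fills in the detail that the paper dismisses as ``a simple estimate'' (the paper uses the slightly wider shell $R-r/2\le|z''|\le R+r/2$ and the binomial theorem in place of the mean value theorem).
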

\begin{proof}
	Let $r=\min \{r_S,1\}$. By lemma \ref{lem:tracedifference} we know that 
	$$\tr(\chi_{B(0,R)} \star S)-\tr((\chi_{B(0,R)} \star S)^2)=\int_{{B(0,R)}}\int_{\Rdd\setminus {B(0,R)} } \tilde{S}(z-z')\ dz' dz, $$
	and by inserting the estimate from lemma \ref{lem:lowerbound}  we get 
	{\footnotesize
	\begin{equation*}
		\tr(\chi_{B(0,R)} \star S)-\tr((\chi_{B(0,R)} \star S)^2)\geq \frac{m}{|B(0,r)|} \int_{{B(0,R)}}\int_{\Rdd\setminus {B(0,R)} }  \int_{\Rdd} \chi_{B(z'',r)}(z)\chi_{B(z'',r)}(z') \ dz'' dz' dz.
	\end{equation*} 
	}
	By changing the order of integration the right hand side becomes
	\begin{equation} \label{eq:proofsharp}
		\frac{m}{|B(0,r)|}  \int_{\Rdd} |B(0,R)\cap B(z'',r)|\ |\left(\Rdd\setminus B(0,R)\right)\cap B(z'',r)| \ dz''.
	\end{equation}
	Now assume that $z''$ lies in the strip in $\Rdd$ defined by $R-r/2\leq |z''|\leq R+r/2$. A simple estimate shows that both $B(0,R)\cap B(z'',r)$ and $\left(\Rdd\setminus B(0,R)\right)\cap B(z'',r)$ must contain a ball of radius $r/4$ in this case, so that $$|B(0,R)\cap B(z'',r)|\ |\left(\Rdd\setminus B(0,R)\right)\cap B(z'',r)|\geq |B(0,r/4)|^2 .$$

The expression in \eqref{eq:proofsharp} is therefore bounded from below by 
	{\footnotesize
	\begin{align*}
		|B(0,r/4)|^2\frac{m}{|B(0,r)|}  \int_{R-r/2 \leq |z''|\leq R+r/2}  \ dz'' &= |B(0,r/4)|^2\frac{m}{|B(0,r)|} C_d \left((R+r/2)^{2d} -(R-r/2)^{2d} \right) \\
		&\geq |B(0,r/4)|^2\frac{m}{|B(0,r)|} C_d 2d r R^{2d-1},
	\end{align*}
	}
	which finishes the proof by setting $C_S:=|B(0,r/4)|^2\frac{m}{ |B(0,r)|} C_d 2 d r$. Here $C_d$ is the measure of the unit sphere in $\Rdd$, and the fact that $(R+r/2)^{2d} -(R-r/2)^{2d} \geq 2 d r R^{2d-1}$ is a simple consequence of the binomial theorem.
\end{proof}
Using these results we may now prove the desired sharpness of \eqref{eq:sharpbound} with exactly the same arguments that were used to prove it for accumulated spectrograms in \cite{Abreu:2017}.
\sharpness*

\begin{proof}
	Since $|\partial B(0,R)|=C_d R^{2d-1}$, where $C_d$ is the measure of the unit sphere in $\Rdd$, the upper bound follows from theorem \ref{thm:sharpbound} with $\epsilon=1/C_d$. For the lower bound we will bound $\| \rho_{B(0,R)}^S-\chi_{B(0,R)} \|_{L^1}$ by $\tr(\chi_\Omega\star S)-\tr((\chi_\Omega\star S)^2$ from below, which will imply the result by proposition \ref{prop:lowerbound}. In the proof of theorem \ref{thm:sharpbound} we derived the equalities 
	\begin{align*}
		\int_{\Omega} |\rho^S_\Omega(z) - \chi_\Omega(z)| \ dz &= |\Omega|-\sum_{k=1}^{A_\Omega} \lambda_k^\Omega \\
		\int_{\Rdd \setminus \Omega} |\rho^S_\Omega(z) - \chi_\Omega(z)| \ dz &= A_\Omega - \sum_{k=1}^{A_\Omega} \lambda_k^\Omega,
	\end{align*}
which together give us -- when using $\sum_{k=1}^{\infty} \lambda_k^\Omega =|\Omega|$ by lemma \ref{lem:traces} -- that
\begin{align*}
	\| \rho_{B(0,R)}^S-\chi_{B(0,R)} \|_{L^1}&= |\Omega|-\sum_{k=1}^{A_\Omega} \lambda_k^\Omega + A_\Omega - \sum_{k=1}^{A_\Omega} \lambda_k^\Omega \\
	&= \sum_{k=1}^{\infty} \lambda_k^\Omega - \sum_{k=1}^{A_\Omega} \lambda_k^\Omega +\sum_{k=1}^{A_\Omega} (1-\lambda_k^\Omega) \\
	&= \sum_{k=A_\Omega +1}^\infty \lambda_k^\Omega  +\sum_{k=1}^{A_\Omega} (1-\lambda_k^\Omega) \\
	&\geq \sum_{k=A_\Omega +1}^\infty \lambda_k^\Omega(1-\lambda_k^\Omega)  +\sum_{k=1}^{A_\Omega}\lambda_k^\Omega  (1-\lambda_k^\Omega) \\
	&= \sum_{k=1}^{\infty} \lambda_k^\Omega-\sum_{k=1}^\infty \left(\lambda_k^\Omega\right)^2 = \tr(\chi_\Omega\star S)-\tr((\chi_\Omega\star S)^2.
\end{align*}
As mentioned, the result now follows from proposition \ref{prop:lowerbound}.
\end{proof}
\begin{rem}
	In \cite{Abreu:2017}, the previous result is stated for spectrograms when $R>0.$ We have only obtained the result for $R>1$, but this is not because we consider a more general setting. In fact, the proof for the upper bound in \cite{Abreu:2017} is simply theorem \ref{thm:sharpbound} with $\epsilon=1$, which needs the assumption $|\partial \Omega|\geq 1$. This is clearly not satisfied for arbitrarily small $R$.
\end{rem}
\section{Examples and other perspectives}
We now turn to examples of Cohen's class distributions such the theory of accumulated Cohen's class distributions works, namely those given by $$Q_S(\psi)=\check{S}\star(\psi \otimes \psi)$$ for some density operator $S$. As we have mentioned, the definition above is equivalent to the more standard definition of Cohen's class, where $\phi \in \mathcal{S}'(\Rdd)$ defines a Cohen's class distribution by $$Q_\phi (\psi)=\phi \ast W(\psi,\psi).$$ In fact, we introduced the set $\states$ such that $\phi\in \states$ if and only if  $Q_\phi=Q_S$ for some density operator $S$.   We will therefore look for functions $\phi$ that belong to $\states$.

\subsubsection{A Weyl symbol characterization of $M_{op}^*$}
Before we look at the examples, we reformulate the definition of $M_{op}^*$. By proposition \ref{prop:weylconvolutions}, the condition for $S\in M_{op}^*$ is, in terms of the Weyl symbol $\phi$ of $\check{S}$, that $\phi \in \states$ and

\begin{equation} \label{eq:equivalentcondition}
  \int_{\Rdd} \phi \ast \check{\phi}(z) |z| \ dz <\infty. 
\end{equation}

\subsection{Examples}
\subsubsection{Spectrograms}
If $S=\varphi \otimes \varphi$ for $\varphi\in L^2(\Rd)$, such that $\check{S}$ has Weyl symbol $\phi=W(\check{\varphi},\check{\varphi})$, then $S$ is a density operator and by \eqref{eq:spectrogram} $Q_S$ is the spectrogram $Q_S(\psi)=|V_\varphi \psi|^2$. A calculation using the definition of convolutions of operators reveals that $\tilde{S}=|V_\varphi \varphi|^2$, so that $S\in M_{op}^*$ if and only if $\int_{\Rdd} |V_\varphi \varphi |^2(z) |z|<\infty.$ This is the setting considered in the theory of accumulated spectrograms \cite{Abreu:2016,Abreu:2017}.

\subsubsection{Schwartz functions} \label{sec:examplesschwartz}
If $\phi$ belongs to the Schwartz space $\mathcal{S}(\Rdd)$, then it is well-known \cite[Prop. 286]{deGosson:2011wq} that $\phi^w$ is a trace class operator with $$\tr(\phi^w)=\int_{\Rdd} \phi(z) \ dz.$$
Hence any suitably normalized $\phi\in \mathcal{S}(\Rdd)$ gives us an operator $\phi^w$ that is trace class with $\tr(\phi^w)=1,$ and it is also clear from \eqref{eq:equivalentcondition} that $\phi^w\in M_{op}^*$ in this case. The problem of determining whether $\phi^w$ is positive is much more difficult. The classical conditions on $\phi$ for $\phi^w$ to be a positive operator are the KLM-conditions \cite{Kastler:1965,Loupias:1966,Loupias:1967}, see also the more recent results in \cite{Cordero:2017}. In the case where $\phi$ is a normalized, generalized Gaussian
 \begin{equation*}
  \phi(z)=2^d \frac{1}{\det(M)^{1/4}}e^{-z^T\cdot  M \cdot z}    \text{ for $z\in \Rdd$},
\end{equation*}
for some $2d\times 2d$-matrix $M$, it is known \cite{Gracia:1988,Mukunda:1987} that the Weyl transform $\phi^w$ is a positive operator if and only if 
\begin{equation*}
  M=S^T \Lambda S,
\end{equation*}
where $S$ is a symplectic matrix and $\Lambda$ is diagonal matrix of the form \[\Lambda=\text{diag}(\lambda_1,\lambda_2,\dots,
\lambda_d,\lambda_1,\lambda_2,\dots,\lambda_d)\] with $0< \lambda_i\leq 1$. Hence $\phi^w$ is a density operator in this case, and the theory of accumulated Cohen's class distributions will work for all such Gaussians. One should note that $Q_\phi$ is \textit{not} a spectrogram for many of these Gaussians  \cite{deGosson:2017,Gracia:1988}. Versions of these results have been obtained several times, see section 4.2 in \cite{Janssen:1997} and references therein, and they are also linked with the symplectic structure of the phase space \cite{deGosson:2009}.
\subsubsection{A nonexample: the Wigner distribution} 
The prototype of a Cohen's class distribution is the Wigner distribution $W(\psi,\psi)$.
By a result due to Grossmann \cite{Grossmann:1976}, the Wigner distribution corresponds to $S=2^d P$ in \eqref{eq:defofcohen}, i.e. $$W(\psi,\psi)=Q_P(\psi)= 2^d P \star (\psi \otimes \psi),$$
see \cite{Luef:2018} for a proof.  
The parity operator $P$ is not a density operator, and so our approach does not apply to the Wigner distribution. In fact, it has been shown that the operators $\chi_\Omega \star P$ are never trace class for a non-trivial domain $\Omega$ \cite[Prop. 11]{Ramanathan:1993}. As a consequence, the methods exploited in this paper, which often consider the sum of eigenvalues of such operators, will fail for the Wigner distribution.

\subsection{Generating new examples from old}
Checking whether a given function $\phi$ belongs to  $\states$ is in general a non-trivial task. However, using quantum harmonic analysis we can use our examples of $\phi\in \states$ to obtain new elements of $\states$.
\begin{lem}
	Let $S$ be a density operator, and let $f\in L^1(\Rdd)$ be a positive function such that $\int_{\Rdd} f(z) \ dz =1.$ Then $f\star S$ is a density operator.
\end{lem}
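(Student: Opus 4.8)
The plan is to verify the three defining properties of a density operator for $f\star S$: that it is trace class, positive, and has trace one. Two of these are immediate from results already recorded earlier in the paper. Since $f\in \Ldd$ and $S\in \tco$, the bound $\|f\star S\|_{\tco}\leq \|f\|_{L^1}\|S\|_{\tco}$ stated after the definition of $f\star S$ in Section \ref{sec:quantumharmonic} shows that $f\star S\in \tco$. Positivity follows directly from Lemma \ref{lem:positiveandidentity}: since $S$ is a positive operator and $f$ is a positive function, $f\star S$ is a positive operator.

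It remains to compute $\tr(f\star S)$, and here I would follow exactly the strategy used in the first part of Lemma \ref{lem:traces}. By \eqref{eq:identity} we have $(f\star S)\star I(z)=\tr(f\star S)$ for every $z\in \Rdd$, since $f\star S\in \tco$ and $I$ is the identity operator. On the other hand, the associativity (and commutativity) of the convolutions lets me rewrite this as $(f\star S)\star I=f\ast (S\star I)$, where the right-hand convolution is now a convolution of functions since $S\star I$ is a function. But $S\star I(z)=\tr(S)=1$ is the constant function, again by \eqref{eq:identity}, so
\begin{equation*}
(f\star S)\star I(z)=\int_{\Rdd} f(z')(S\star I)(z-z')\ dz'=\tr(S)\int_{\Rdd} f(z')\ dz'=1.
\end{equation*}
Comparing the two evaluations yields $\tr(f\star S)=1$, which completes the verification that $f\star S$ is a density operator.

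There is no serious obstacle here: every ingredient --- the trace-class bound, the preservation of positivity in Lemma \ref{lem:positiveandidentity}, the identity $S\star I=\tr(S)$, and the associativity of the operator--function convolutions --- has already been established, and the trace computation is a direct adaptation of the argument for $\tr(\chi_\Omega\star S)=|\Omega|\tr(S)$ in Lemma \ref{lem:traces}, with $\chi_\Omega$ replaced by the general positive, normalized function $f$. The only point demanding any care is confirming that the associativity rearrangement $(f\star S)\star I=f\ast(S\star I)$ is legitimate, which is exactly the associativity of convolutions cited in Section \ref{sec:quantumharmonic} and used in identical form in the proof of Lemma \ref{lem:traces}.
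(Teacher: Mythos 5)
Your proposal is correct and follows essentially the same route as the paper: positivity from Lemma \ref{lem:positiveandidentity}, and the trace computed by the same associativity argument with the identity operator used for the first part of Lemma \ref{lem:traces} (the paper simply cites that argument rather than repeating it). Your explicit spelling out of the steps, including the trace-class bound $\|f\star S\|_{\tco}\leq\|f\|_{L^1}\|S\|_{\tco}$, is a faithful and complete rendering of what the paper leaves implicit.
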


\begin{proof}
	By lemma \ref{lem:positiveandidentity}, $f\star S$ is a positive operator, and the same proof as for the first part of lemma \ref{lem:traces} gives that $$\tr\left(f\star S \right)=\int_{\Rdd} f(z) \ dz\ \tr(S)=1.$$

\end{proof}
Using associativity of convolutions we see that the Cohen's class distribution associated to $\check{f} \star S  $ is given by
\begin{equation*}
	Q_{\check{f}\star S}(\psi)=(\check{f} \star S)\parcheck \star (\psi \otimes \psi)=f \ast Q_S(\psi).
\end{equation*}

\begin{cor}
	Let $\phi \in \states$, and let $f\in L^1(\Rdd)$ be a positive function such that $\int_{\Rdd} f(z) \ dz =1.$ Then $f\ast \phi \in \states$, and the Cohen's class distributions associated to $\phi$ and $f\ast \phi$ are related by
	\begin{equation} \label{convolutioncohen}
		Q_{f\ast \phi} (\psi)=f\ast Q_\phi(\psi)\hskip 1em \text{ for } \psi\in L^2(\Rd).
	\end{equation}
\end{cor}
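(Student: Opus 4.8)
The plan is to translate both claims into statements about operator convolutions through the Weyl correspondence, after which they follow almost immediately from the lemma and the computation placed just before the corollary. First I would fix the dictionary between the symbol $\phi$ and a density operator. Since $\phi\in\states$, the operator $\phi^w$ is by definition a density operator; setting $\check{S}:=\phi^w$ (equivalently $S=P\phi^w P$) produces an operator $S$ that is unitarily equivalent to $\phi^w$ and hence is itself a density operator, and by \eqref{eq:coheneq} we have $Q_\phi=Q_S$. This reduces everything to manipulations of $S$.

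For the membership $f\ast\phi\in\states$ I would invoke proposition \ref{prop:weylconvolutions}(2): the Weyl symbol of $f\star\phi^w$ is $f\ast\phi$, so $(f\ast\phi)^w=f\star\phi^w$. Because $\phi^w$ is a density operator and $f$ is a positive $L^1$-function with $\int_{\Rdd}f(z)\,dz=1$, the lemma immediately preceding the corollary shows that $f\star\phi^w$ is again a density operator. Thus $(f\ast\phi)^w$ is a density operator, which is exactly the assertion $f\ast\phi\in\states$.

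For the identity \eqref{convolutioncohen} the whole content is the bookkeeping of the involution. Using $(\check{f}\star S)\parcheck=f\star\check{S}$ (the footnoted identity $(g\star T)\parcheck=\check{g}\star\check{T}$ from \cite{Skrettingland:2017}) together with $\check{S}=\phi^w$ and proposition \ref{prop:weylconvolutions}(2), I would compute $(\check{f}\star S)\parcheck=f\star\phi^w=(f\ast\phi)^w$. Since the Weyl transform is injective, this identifies $\check{f}\star S$ as the operator whose Cohen distribution is $Q_{f\ast\phi}$, i.e. $Q_{f\ast\phi}=Q_{\check{f}\star S}$. The displayed computation just before the corollary then yields $Q_{\check{f}\star S}(\psi)=f\ast Q_S(\psi)=f\ast Q_\phi(\psi)$, which is \eqref{convolutioncohen}.

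I expect the only genuinely delicate point to be the consistent tracking of the involutions $\check{\phantom{x}}$ and the correspondence $\check{S}=\phi^w$: a single misplaced involution would interchange $f$ and $\check{f}$ and destroy \eqref{convolutioncohen}, whereas the two substantive facts used---preservation of positivity and trace, and associativity of the convolutions---are already established in the preceding lemma and display. A secondary technical remark concerns the regularity of $f\ast\phi$: as the Weyl symbol of the trace-class operator $f\star\phi^w$ it lies in $L^2(\Rdd)\subset\mathcal{S}'(\Rdd)$ (by Young's inequality $L^1\ast L^2\subset L^2$), which is all that is needed for $Q_{f\ast\phi}$ to be a well-defined Cohen's class distribution.
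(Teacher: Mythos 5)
Your proof is correct. The first half (membership $f\ast\phi\in\states$) is exactly the paper's argument: $(f\ast\phi)^w=f\star\phi^w$ by proposition \ref{prop:weylconvolutions}, plus the preceding lemma showing $f\star S$ is a density operator. For the identity \eqref{convolutioncohen}, however, you take a genuinely different route: you work on the operator side, identifying $Q_{f\ast\phi}=Q_{\check{f}\star S}$ via injectivity of the Weyl transform and the involution identity $(\check{f}\star S)\parcheck=f\star\check{S}$, and then invoking the function--operator associativity $(f\star\check{S})\star(\psi\otimes\psi)=f\ast\bigl(\check{S}\star(\psi\otimes\psi)\bigr)$ from the display preceding the corollary. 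The paper instead stays entirely on the function side: it writes $Q_{f\ast\phi}(\psi)=(f\ast\phi)\ast W(\psi,\psi)=f\ast\bigl(\phi\ast W(\psi,\psi)\bigr)=f\ast Q_\phi(\psi)$, using only the classical definition of Cohen's class and associativity of ordinary convolution --- a one-line argument that needs no involution bookkeeping at all. Your version buys consistency with the operator-theoretic framework (and makes explicit which density operator realizes $Q_{f\ast\phi}$ in the form \eqref{eq:defofcohen}, namely $\check{f}\star S$), at the cost of the delicate involution tracking you yourself flag; the paper's version is shorter and avoids that hazard entirely. Both are valid, and your involution bookkeeping does come out right.
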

\begin{proof}
	We know from proposition \ref{prop:weylconvolutions} that $(f\ast \phi)^w=f\star \phi^w$, and as $\phi^w$ is a density operator by assumption the previous lemma gives that $f \ast \phi\in \states$. By definition
	\begin{equation*}
		Q_{f\ast \phi}(\psi)=(f\ast \phi) \ast W(\psi,\psi)=f\ast (\phi \ast W(\psi,\psi))=f\ast Q_\phi(\psi).
	\end{equation*}
	 \end{proof}

In particular, this works when $Q_\phi$ is a spectrogram, i.e. $Q_\phi(\psi)=|V_\varphi \psi |^2$ for some $\varphi \in L^2(\Rdd)$. We then obtain the new Cohen's class distribution 
\begin{equation*} 
	Q_{\phi \ast f}(\psi)=f\ast |V_{\varphi}\psi|^2.
\end{equation*}
The non-asymptotic bounds on the convergence of accumulated Cohen's class distributions $\rho_{\Omega}^S$ in theorems \ref{thm:lonebound}, \ref{thm:weakl2} and \ref{thm:sharpbound} depend on the quantity $\|S\|_{M_{op}^*}$. We are therefore interested in how this quantity changes when $S$ is replaced by the new density operator $\check{f}\star S$ discussed above, or equivalently when $Q_\phi$ is replaced by $f\ast Q_{ \phi}$.

\begin{prop}
	Let $S\in M_{op}^*$, and let $f$ be a positive function such that $\int_{\Rdd}f(z) \ dz=1$ and $\int_{\Rdd}f(z)|z|\ dz <\infty$. Then $\check{f}\star S\in M_{op}^*$, and $$\|\check{f}\star S\|_{M^*_{op}}^2\leq \|S\|_{M_{op}^*}^2+2 \int_{\Rdd} f(z) |z| \ dz.$$
\end{prop}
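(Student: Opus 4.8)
The plan is to reduce the whole statement to one explicit formula for the function $\widetilde{\check f\star S}$ and then to exploit subadditivity of the Euclidean norm. First I would record that $\check f\star S$ is already a density operator: since $f$ is positive with $\int_{\Rdd} f(z)\,dz=1$, the same is true of $\check f$ (because $\int_{\Rdd}\check f(z)\,dz=\int_{\Rdd} f(z)\,dz=1$), so the preceding lemma applies and gives that $\check f\star S$ is positive with trace $1$. This disposes of the positivity and normalization requirements in the definition of $M_{op}^*$, and it remains only to control the weighted integral $\int_{\Rdd}\widetilde{\check f\star S}(z)\,|z|\,dz=\|\check f\star S\|_{M_{op}^*}^2$.

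The heart of the argument is the identity
\[
\widetilde{\check f\star S}=(\check f\ast f)\ast\tilde S .
\]
To derive it I would first apply the involution rule $(\check f\star S)\parcheck=f\star\check S$, which is the same fact cited in the footnote of Lemma \ref{lem:traces} (from \cite{Skrettingland:2017}), so that $\widetilde{\check f\star S}=(\check f\star S)\star(f\star\check S)$. Then repeated use of the associativity and commutativity of Werner's convolutions from Section \ref{sec:quantumharmonic} pushes the two scalar factors outside and collapses the expression to a function--function convolution: $(\check f\star S)\star(f\star\check S)=\check f\ast\big(S\star(f\star\check S)\big)=\check f\ast\big(f\ast(\check S\star S)\big)=(\check f\ast f)\ast\tilde S$, where I use $\check S\star S=S\star\check S=\tilde S$ by commutativity. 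The delicate point is that this associativity mixes operators and functions and is genuinely nontrivial; I would invoke it exactly as it is used in Proposition \ref{prop:associativity} and Lemma \ref{lem:traces} rather than re-proving it.

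With the formula in hand the bound is a two-step first-moment estimate resting on $|a+b|\le|a|+|b|$ and Tonelli's theorem (all integrands are nonnegative). Setting $g=\check f\ast f$, a change of variables writes $\int_{\Rdd} g(z)|z|\,dz=\iint f(u)f(v)\,|v-u|\,du\,dv$, so subadditivity together with $\int_{\Rdd} f=1$ yields $\int_{\Rdd} g(z)|z|\,dz\le 2\int_{\Rdd} f(z)|z|\,dz$; one also has $\int_{\Rdd} g=\big(\int_{\Rdd}\check f\big)\big(\int_{\Rdd} f\big)=1$. Then, expanding the remaining convolution, $\int_{\Rdd}(g\ast\tilde S)(z)|z|\,dz=\iint g(w)\tilde S(y)\,|y+w|\,dw\,dy\le \big(\int_{\Rdd} g\big)\big(\int_{\Rdd}\tilde S(y)|y|\,dy\big)+\big(\int_{\Rdd} g(w)|w|\,dw\big)\big(\int_{\Rdd}\tilde S\big)$. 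Recalling $\int_{\Rdd}\tilde S=\tr(S)^2=1$ (Lemma \ref{lem:werner}), $\int_{\Rdd}\tilde S(y)|y|\,dy=\|S\|_{M_{op}^*}^2$, $\int_{\Rdd} g=1$, and the bound just obtained for $\int_{\Rdd} g(w)|w|\,dw$, the right-hand side equals $\|S\|_{M_{op}^*}^2+2\int_{\Rdd} f(z)|z|\,dz$. Its finiteness shows $\check f\star S\in M_{op}^*$ and gives the stated inequality. I expect the only genuinely subtle step to be the convolution identity $\widetilde{\check f\star S}=(\check f\ast f)\ast\tilde S$; everything after it is a routine triangle-inequality computation.
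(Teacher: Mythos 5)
Your proof is correct and follows essentially the same route as the paper's: the identity $\widetilde{\check f\star S}=(\check f\ast f)\ast\tilde S$ via $(\check f\star S)\parcheck=f\star\check S$ and associativity, followed by the triangle-inequality bound on first moments of convolutions of normalized positive functions. The only cosmetic difference is that the paper isolates the moment estimate as a general preliminary computation and applies it twice, whereas you carry out the same computations inline.
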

\begin{proof}
	We begin by proving a general result. Assume that $g,h$ are positive functions on $\Rdd$ such that $\int_{\Rdd} g(z)= \int_{\Rdd} h(z)\ dz=1$. Then
	\begin{align*}
		\int_{\Rdd} g\ast h(z) |z| \ dz&= \int_{\Rdd}\int_{\Rdd} g(z')h(z-z') |z| \ dz' \ dz \\
		&= \int_{\Rdd} g(z') \int_{\Rdd} h(z'') |z''+z'| \ dz'' \ dz' \hskip 3em (z'':=z-z') \\
		&\leq \int_{\Rdd} g(z') \int_{\Rdd} h(z'') \left(|z''|+|z'|\right) \ dz'' \ dz' \\
		&= \int_{\Rdd} g(z') \ dz' \int_{\Rdd} h(z'') |z''| \ dz'' + \int_{\Rdd} g(z') |z'| \ dz' \int_{\Rdd} h(z'') \ dz'' \\
		&=  \int_{\Rdd} h(z) |z| \ dz+  \int_{\Rdd} g(z) |z| \ dz.
	\end{align*}
	Now note that $$\|\check{f}\star S\|_{M_{op}^*}^2=\int_{\Rdd} (\check{f}\star S) \star (\check{f}\star S)\parcheck (z) |z| \ dz = \int_{\Rdd} (\check{f}\ast f)  \ast (\check{S}\star S)(z) |z| \ dz,$$ where we have used $(\check{f}\star S )\parcheck=f\star \check{S}$ and the commutativity and associativity of convolutions. The functions $g=\check{f}\ast f$ and $h=S\star \check{S}$ satisfy the assumptions of the calculation above, since they are positive functions by lemma \ref{lem:positiveandidentity}, $\int_{\Rdd} S\star \check{S}(z) \ dz =\tr(S)\tr(\check{S})=1$ by lemma \ref{lem:werner} and $\int_{\Rdd} \check{f}\ast f (z) \ dz=\left(\int_{\Rdd} f(z) \ dz\right)^2 =1$ by a simple calculation using Tonelli's theorem. So we apply our calculation with these functions, and find that
	\begin{align*}
		\|\check{f}\star S\|_{M_{op}^*}^2 &\leq \int_{\Rdd} \check{f}\ast f(z) |z| \ dz+  \int_{\Rdd} S\star \check{S}(z) |z| \ dz \\
		&= \|S\|_{M_{op}^*}^2+\int_{\Rdd} f\ast \check{f}(z) |z| \ dz.
	\end{align*}  
	Furthermore, if we pick $g=\check{f}$ and $h=f$, we get $\int_{\Rdd} \check{f}\ast f(z) |z| \ dz \leq 2 \int_{\Rdd} f(z) |z| \ dz$. If we insert this into the estimate above, our result follows.
\end{proof}

\begin{rem}
	The idea of smoothing a time-frequence distribution $Q$ by taking convolutions with a function $f$ on $\Rdd$, as in \eqref{convolutioncohen}, is useful in practice \cite{Janssen:1997}. In a sense, this is the idea behind Cohen's class, which by definition consists of smoothened versions of the Wigner distribution. Janssen mentions the case where $Q$ has ''rapidly alternating positive and negative values``\cite[p. 3]{Janssen:1997}, where smoothing can remove this behaviour. In fact, we saw in example \ref{sec:examplesschwartz} that convolving the Wigner distribution with a Gaussian $\phi$ produces a positive distribution $Q_\phi$.
	\end{rem}

Another simple way of obtaining new examples is to consider convex combinations. If $\phi_n \in \states$ for each $1\leq n \leq N$ and $\{\lambda_n\}_{n=1}^N$ is a sequence of nonnegative numbers with $\sum_{n=1}^N \lambda_n=1$, then $$\phi:= \sum_{n=1}^N \lambda_n \phi_n$$ also belongs to $\states$ since $\phi^w=\sum_{n=1}^N \lambda_n \phi_n^w$. Using the definition of positivity for operators it is trivial to check that $\phi^w$ is positive, and $$\tr(\phi^w)=\sum_{n=1}^N \lambda_n \tr(\phi_n^w)=\sum_{n=1}^N \lambda_n=1.$$

\subsection*{Acknowledgment}
This work was finished while F.L.\ wAS visiting NuHAG at the Faculty of Mathematics at the University of Vienna, who is  thankful for the hospitality.

\end{document}